\documentclass[pdftex]{amsart}
\usepackage{amsmath, amssymb, url}
\usepackage{type1cm}
\setcounter{tocdepth}{1}
\usepackage{tikz, tikz-cd}
\usetikzlibrary{matrix,arrows,shapes,calc}

\usepackage[top=30truemm,bottom=30truemm,left=35truemm,right=35truemm]{geometry}

\newtheorem{defi}{Definition}[section]
\newtheorem{thm}[defi]{Theorem}
\newtheorem{prop}[defi]{Proposition}
\newtheorem{lem}[defi]{Lemma}
\newtheorem{cor}[defi]{Corollary}
\newtheorem{eg}[defi]{Example}
\newtheorem{rem}[defi]{Remark}
\newtheorem{conj}[defi]{Conjecture}

\newtheorem{assum}[defi]{Assumption}
\newtheorem{que}[defi]{Question}

\DeclareMathOperator{\add}{add}

\DeclareMathOperator{\Auteq}{Auteq}
\DeclareMathOperator{\CC}{\mathbb{C}}

\DeclareMathOperator{\Cok}{Cok}
\DeclareMathOperator{\Cone}{Cone}
\DeclareMathOperator{\D}{D^b}

\DeclareMathOperator{\EE}{\mathcal{E}}
\DeclareMathOperator{\End}{End}
\DeclareMathOperator{\Ext}{Ext}

\DeclareMathOperator{\FM}{FM}

\DeclareMathOperator{\gldim}{gldim}
\DeclareMathOperator{\Gr}{Gr}

\DeclareMathOperator{\Hom}{Hom}
\DeclareMathOperator{\Ima}{Im}
\DeclareMathOperator{\IW}{\Phi}

\DeclareMathOperator{\lext}{\mathcal{E}\!\mathit{xt}}
\newcommand{\id}{\mathrm{id}}

\DeclareMathOperator{\Ker}{Ker}

\DeclareMathOperator{\LL}{\mathcal{L}}
\DeclareMathOperator{\LLL}{\mathbb{L}}
\DeclareMathOperator{\LGr}{LGr}
\DeclareMathOperator{\map}{\longrightarrow}
\DeclareMathOperator{\modu}{mod}

\DeclareMathOperator{\Per}{Per}
\DeclareMathOperator{\PP}{\mathbb{P}}
\DeclareMathOperator{\Pic}{Pic}

\DeclareMathOperator{\Qcoh}{Qcoh}

\DeclareMathOperator{\RRR}{\mathbb{R}}

\DeclareMathOperator{\RGamma}{R\Gamma}
\DeclareMathOperator{\RHom}{RHom}
\DeclareMathOperator{\Seg}{Seg}
\newcommand{\sh}{\mathcal{F}}

\DeclareMathOperator{\Spec}{Spec}
\DeclareMathOperator{\Sub}{\mathcal{S}}
\DeclareMathOperator{\ST}{T}

\DeclareMathOperator{\Sym}{Sym}
\DeclareMathOperator{\Tilt}{\mathcal{T}}
\DeclareMathOperator{\Tot}{Tot}

\DeclareMathOperator{\TU}{TU}
\DeclareMathOperator{\UT}{UT}
\newcommand{\stsh}{\mathcal{O}}

\newcommand{\RR}{\mathrm{R}}

\title[on derived equivalence for Abuaf flop]{on derived equivalence for Abuaf flop: \\ mutation of non-commutative crepant resolutions and spherical twists}
\author{wahei hara}
\address{The Mathematics and Statistics Building, University of Glasgow, University Place, Glasgow, G12 8QQ, UK.}
\email{wahei.hara@glasgow.ac.uk}
\subjclass[2010]{14E05, 14F05, 14J32}
\keywords{Derived category; Non-commutative crepant resolution; Iyama-Wemyss mutation; Spherical twist}
\date{}

\begin{document}

\begin{abstract}
In \cite{Seg16}, Segal constructed a derived equivalence for an interesting 5-fold flop that was provided by Abuaf.
The aim of this article is to add some results for the derived equivalence for Abuaf's flop.
Concretely, we study the equivalence for Abuaf's flop by using Toda-Uehara's tilting bundles and Iyama-Wemyss's mutation functors.
In addition, we observe a ``flop-flop=twist" result and a ``multi-mutation=twist" result for Abuaf's flop.
\end{abstract}

\maketitle

\tableofcontents

\section{Introduction}

\subsection{Motivation}

In \cite{Seg16}, Segal studied an interesting flop provided by Abuaf, which we call the \textit{Abuaf flop}.
Let $V$ be a four dimensional symplectic vector space and $\LGr(V)$  the Lagrangian Grassmannian.
Let $Y$ be a total space of a rank 2 bundle $\Sub(-1)$ on $\LGr(V)$, where $\Sub$ is the rank 2 subbundle and $\stsh_{\LGr(V)}(-1) := \bigwedge^2 \Sub$.
Then, $Y$ is a local Calabi-Yau $5$-fold.
On the other hand, let $\PP(V)$ be the projective space and put $\LL := \stsh_{\PP(V)}(-1)$.
Then using the symplectic form on $V$ gives an injective bundle map $\LL \hookrightarrow \LL^{\bot}$.
Let $Y'$ be the total space of a bundle $(\LL^{\bot}/\LL) \otimes \LL^2$,
then $Y'$ is another local Calabi-Yau $5$-fold with an isomorphism $H^0(Y, \stsh_Y) \simeq H^0(Y', \stsh_{Y'}) =: R$ of $\CC$-algebras.
Put $X := \Spec R$.
Abuaf observed that the correspondence $Y \rightarrow X \leftarrow Y'$ gives an example of $5$-dimensional flops.
This flop has the nice feature that the contracting loci on either side are not isomorphic.
In \cite{Li17}, Li proved that a simple flop of dimension at most five is one of the following
\begin{enumerate}
\item[(1)] a (locally trivial deformation of) standard flop,
\item[(2)] a (locally trivial deformation of) Mukai flop,
\item[(3)] the Abuaf flop.
\end{enumerate}
Standard flops and Mukai flops are well-studied.
Thus it is important to study the Abuaf flop from the point of view of Li's classification.

Based on the famous conjecture by Bondal, Orlov, and Kawamata, it is expected that $Y$ and $Y'$ are derived equivalent.
Segal proved that this expectation is true.
The method of his proof is as follows.
He constructed tilting bundles $\Tilt_{\mathrm{S}}$ and $\Tilt'_{\mathrm{S}}$ on $Y$ and $Y'$ respectively,
and proved that there is an isomorphism
\[ \End_Y(\Tilt_{\mathrm{S}}) \simeq \End_{Y'}(\Tilt'_{\mathrm{S}}). \]
Then, by using a basic theorem for tilting objects, we have a derived equivalence
\[ \Seg' : \D(Y') \xrightarrow{\sim} \D(Y). \]

On the other hand, in \cite{TU10}, Toda and Uehara established a method to construct a tilting bundle under some assumptions (Assumption \ref{assumption TU1} and Assumption \ref{assumption TU2}).
The difficulties to use Toda-Uehara's method are the following:
\begin{enumerate}
\item[(a)] There are few examples known to satisfy their assumptions.
\item[(b)] Since Toda-Uehara's construction consists of complicated inductive step, it is difficult to find an explicit description of the resulting tilting bundle in general.
\end{enumerate}
However, in the case of the Abuaf flop, it is possible show the following.

\begin{thm}[see Section \ref{subsec: TU assump}]
$Y$ and $Y'$ satisfy Toda-Uehara's assumptions.
\end{thm}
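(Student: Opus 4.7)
The plan is to verify Toda--Uehara's Assumption \ref{assumption TU1} and Assumption \ref{assumption TU2} separately for each of the two contractions $f : Y \to X$ and $f' : Y' \to X$. The basic observation that will power every step is that since $Y$ and $Y'$ are total spaces of rank-$2$ vector bundles on the smooth projective varieties $Z := \LGr(V)$ and $Z' := \PP(V)$, the natural affine projections $\pi : Y \to Z$ and $\pi' : Y' \to Z'$ give $\pi_{\ast} \stsh_Y \simeq \bigoplus_{k \geq 0} \Sym^k \EE^{\vee}$, where $\EE = \Sub(-1)$ is the defining bundle on $Z$ (and analogously on the $Y'$ side with $\EE' = (\LL^{\bot}/\LL) \otimes \LL^2$). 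This reduces every cohomology or Ext computation on $Y$ (resp.\ $Y'$) to one on the base $Z$ (resp.\ $Z'$).

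For the first assumption, the purely geometric items---$Y$ smooth and quasi-projective, $X$ affine and Noetherian, $f$ projective and birational---are essentially immediate from the vector bundle description together with the definition $R = H^0(Y, \stsh_Y)$. The remaining condition $Rf_{\ast} \stsh_Y = \stsh_X$ becomes, after the reduction above, the vanishings $H^{>0}(Z, \Sym^k \EE^{\vee}) = 0$ for all $k \geq 0$, and analogously on $Z'$. On $Z = \LGr(V)$ we have $\EE^{\vee} = \Sub^{\vee}(1)$, so Borel--Weil--Bott for the Lagrangian Grassmannian of $\Sp(V)$ resolves the question. On $Z' = \PP(V) = \PP^3$ the bundle $\EE'$ is controlled by $0 \to \LL \to \LL^{\bot} \to \LL^{\bot}/\LL \to 0$ together with the Euler sequence, and the required vanishing reduces to standard $\PP^3$-cohomology.

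For the second assumption I plan to exhibit the required generator (or generating sequence of line bundles) on $Y$ and $Y'$ by pulling back a small range of twists $\stsh_Z(a)$ from the base along $\pi$ and $\pi'$. The fibrewise generation and Ext-vanishing conditions demanded by Assumption \ref{assumption TU2} then translate, via the same base-change reduction, into a finite family of cohomological statements of the form $H^{>0}(Z, \Sym^k \EE^{\vee}(b)) = 0$ for an appropriate range of integers $b$ and all $k \geq 0$, together with the dual generation statements on the zero section.

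The main obstacle will be carrying out these Borel--Weil--Bott computations \emph{uniformly in $k$} on the Lagrangian Grassmannian side. The bundles $\Sym^k \Sub^{\vee}(b)$ decompose, via symplectic Schur functors and Pieri-type rules for $\Sp(V)$, into finite sums of irreducible equivariant bundles, and Bott's algorithm must then be applied to each dominant weight after the $\rho$-shift to detect the nonzero cohomological degree. Verifying that the prescribed range of twists $b$ lies in the region where only $H^0$ survives, for every $k \geq 0$, is the bookkeeping task that I expect to be the technical core of the proof; the parallel check on $\PP(V) = \PP^3$ is routine.
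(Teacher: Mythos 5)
Your handling of Assumption \ref{assumption TU1} is essentially consistent with the paper: the only substantive content there is the vanishing $H^i(Y,\stsh_Y(-j))=0$ for $i\geq 2$ and $0<j<3$ (and its analogue on $Y'$), which the paper simply quotes from Segal's computations and which your reduction along $\pi$ and $\pi'$ followed by Borel--Bott--Weil on $\LGr(V)$ and $\PP(V)$ would indeed establish. (Note, however, that $Rf_*\stsh_Y\simeq\stsh_X$ is part of the standing setup of \cite{TU10}, not of Assumption \ref{assumption TU1} itself, and the assumption also requires you to exhibit the ample, globally generated line bundle $\stsh_Y(1)$ explicitly, which here is the pullback of the ample generator from the base.)

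The genuine gap is your treatment of Assumption \ref{assumption TU2}. That assumption is not a generation or Ext-vanishing condition on a specific list of equivariant bundles; it quantifies over arbitrary objects: for every $\mathcal{K}\in\D(Y)$ with $\RHom_Y\bigl(\bigoplus_{i=0}^{2}\stsh_Y(-i),\mathcal{K}\bigr)=0$, one must show the same vanishing for every cohomology sheaf $\mathcal{H}^k(\mathcal{K})$. No finite family of Borel--Bott--Weil vanishings on the base, nor any ``dual generation statements on the zero section,'' can deliver such a statement, because both the hypothesis and the conclusion range over all complexes orthogonal to the three line bundles. The paper's proof (following \cite[Section 6.2]{TU10}) is categorical: one embeds $Y$ into $\PP^4_R$ (resp.\ $Y'$ into $\PP^3_R$) via $\stsh_Y(1)$, uses the Beilinson-type semiorthogonal decomposition of $\D(\PP^4_R)$ over the affine base to see that $h_*\mathcal{K}$ lies in $\langle g^*\D(X)\otimes\stsh_{\PP^4}(-4),\,g^*\D(X)\otimes\stsh_{\PP^4}(-3)\rangle$, and then runs a support argument (the cohomology sheaves of $h_*\mathcal{K}$ are supported on $Y$, whereas sheaves of the form $\mathcal{H}^k(W_{-4})\otimes_R\stsh_{\PP^4_R}(-4)$ are supported on unions of full $\PP^4$-fibres) to produce short exact sequences expressing each $\mathcal{H}^k(h_*\mathcal{K})$ in terms of $\stsh_{\PP^4_R}(-3)$ and $\stsh_{\PP^4_R}(-4)$ twists, from which the required orthogonality of $\mathcal{H}^k(\mathcal{K})$ follows; on the $Y'$ side the argument is even shorter because $h'_*\mathcal{K}'$ lands in the single component $\D(R)\otimes_R\stsh_{\PP^3_R}(-3)$. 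As written, your proposal contains no argument applicable to an arbitrary $\mathcal{K}$, so it does not prove Assumption \ref{assumption TU2}, and hence not the theorem.
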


Hence the construction by Toda and Uehara gives new tilting bundles $\Tilt_{\mathrm{T}}$ and $\Tilt'_{\mathrm{T}}$ on $Y$ and $Y'$, respectively.
Moreover, fortunately, it is possible to compute the resulting tilting bundles explicitly in this case.
Using this explicit description of the tilting bundle shows that there is another tilting bundle $\Tilt_{\mathrm{U}}$ on $Y$ that satisfies
\[ \End_Y(\Tilt_{\mathrm{U}}) \simeq \End_{Y'}(\Tilt_{\mathrm{T}}'). \]
Therefore applying the basic theorem for tilting objects again provides a new derived equivalence
\[ \TU' : \D(Y') \to \D(Y). \]
Note that a tilting bundle constructed by using Toda-Uehara's method can be regarded as a canonical one, 
because it provides a projective generator of a perverse heart of the derived category.
Thus, it is quite natural to ask the following questions.

\begin{que} \label{intro Q} \rm
\begin{enumerate}
\item[(1)] What is the relation among three tilting bundles on $Y$, $\Tilt_{\mathrm{S}}$, $\Tilt_{\mathrm{T}}$, and $\Tilt_{\mathrm{U}}$?
\item[(2)] What is the relation between two tilting bundles on $Y'$, $\Tilt'_{\mathrm{S}}$ and $\Tilt'_{\mathrm{T}}$?
\item[(3)] What is the relation between two equivalences $\Seg'$ and $\TU'$?
\end{enumerate}
\end{que}

The aim of this article is to answer these questions.

\subsection{NCCRs and Iyama-Wemyss's mutations}
Put
\begin{align*}
\Lambda_{\mathrm{S}} &:= \End_Y(\Tilt_{\mathrm{S}}) = \End_{Y'}(\Tilt'_{\mathrm{S}}), \\
\Lambda_{\mathrm{T}} &:= \End_Y(\Tilt_{\mathrm{T}}), \\
\Lambda_{\mathrm{U}} &:= \End_Y(\Tilt_{\mathrm{U}}) = \End_{Y'}(\Tilt'_{\mathrm{T}}).
\end{align*}
Then these algebras are \textit{non-commutative crepant resolutions} (=NCCRs) of $X = \Spec R$.
The notion of NCCR was first introduced by Van den Bergh as a non-commutative analog of crepant resolutions.
An NCCR of a Gorenstein domain $R$ is defined as the endomorphism ring $\Lambda := \End_R(M)$ of a reflexive $R$-module $M$ such that
$\Lambda$ is Cohen-Macaulay as an $R$-module and its global dimension is finite.
As in the commutative case, a Gorenstein domain $R$ may have many different NCCRs.
One way to compare different NCCRs is \textit{Iyama-Wemyss's mutation} (= IW mutation).

Let $A$ be a $d$-singular Calabi-Yau algebra and $M$ an $A$-module whose endomorphism ring $\End_A(M)$ is an NCCR of $A$.
Let $N \in \add M$ and consider a right $(\add N^*)$-approximation of $M^*$
\[ a : N_0^* \to M^* \]
(see Definition \ref{def approx}).
Then IW mutation of $M$ at $N$ is defined as $\mu_N(M) := N \oplus \Ker(a)^*$.
In \cite{IW14}, Iyama and Wemyss proved that the endomorphism ring of $\mu_N(M)$ is also an NCCR of $A$ and there is a derived equivalence
\[ \Phi_N : \D(\modu \End_A(M)) \xrightarrow{\sim} \D(\modu \End_A(\mu_N(M))) \]
(see Theorem \ref{IW mutation equiv} for more detail).

In many cases, it is observed that important NCCRs are connected by multiple IW mutations.
For example, Nakajima proved that, in the case of three dimensional Gorenstein toric singularities associated with reflexive polygons, all \textit{splitting} NCCRs are connected by repeating IW mutations \cite{Nak16}.
In addition, the author studied IW mutations of certain NCCRs of the minimal nilpotent orbit closure of type A \cite{H17a}.
Also in the case of the Abuaf flop, we can show the following.

\begin{thm}[= Theorem \ref{mutation1}, Theorem \ref{mutation2}]
The above three NCCRs $\Lambda_{\mathrm{S}}$, $\Lambda_{\mathrm{T}}$, and $\Lambda_{\mathrm{U}}$ are connected by multiple IW mutations.
\end{thm}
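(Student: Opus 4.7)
The plan is to work on the singular side $X = \Spec R$: each tilting bundle $\Tilt_{\bullet}$ on $Y$ (or $Y'$) pushes forward to a reflexive $R$-module $M_{\bullet}$ with $\End_R(M_{\bullet}) \simeq \Lambda_{\bullet}$, and from the explicit descriptions of $\Tilt_{\mathrm{S}}$, $\Tilt_{\mathrm{T}}$, $\Tilt_{\mathrm{U}}$ obtained in the previous sections one reads off a decomposition $M_{\bullet} = \bigoplus_i M_{\bullet,i}$ into indecomposable reflexive summands. Laying these three decompositions side by side is the starting point.

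For each pair of NCCRs that I want to connect, the procedure is the following: pick a subset $J$ of indecomposable summands that is common to both source and target decompositions, let $N := \bigoplus_{j \in J} M_j$, and let $L$ be the summand of the source module that must be replaced. Following Definition \ref{def approx}, compute a right $(\add N^*)$-approximation $a : N_0^* \to L^*$ and check that $\Ker(a)^*$ is precisely the indecomposable summand that appears in place of $L$ in the target decomposition. Then $\mu_N(M) = N \oplus \Ker(a)^*$ is isomorphic to the target module (up to adding free summands), and Theorem \ref{IW mutation equiv} produces the desired derived equivalence $\Phi_N$ between the module categories of the two NCCRs.

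Because both $\Tilt_{\mathrm{T}}$ and $\Tilt_{\mathrm{U}}$ arise from Toda-Uehara's inductive construction (on $Y$ and $Y'$ respectively), I expect $\Lambda_{\mathrm{T}}$ and $\Lambda_{\mathrm{U}}$ to differ by a single IW mutation at the summand(s) common to both, which gives Theorem \ref{mutation2}. Segal's bundle $\Tilt_{\mathrm{S}}$ is assembled from a different family of summands, so to connect $\Lambda_{\mathrm{S}}$ to the Toda-Uehara NCCRs I anticipate needing a composition of two mutations, passing through a convenient intermediate module (very likely $M_{\mathrm{T}}$ or $M_{\mathrm{U}}$ itself); this is Theorem \ref{mutation1}.

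The main obstacle will be the explicit computation of these approximations. Each $M_{\bullet,i}$ is the $R$-module of global sections of an explicit vector bundle on $Y$ or $Y'$, so $\Hom_R(N, L)$ is accessible by combining the Koszul resolution of the zero section inside the bundle map $Y \to \LGr(V)$ (resp.\ $Y' \to \PP(V)$) with the Borel-Weil-Bott theorem on $\LGr(2,4)$ and standard line-bundle cohomology on $\PP^3$. Once these Hom-spaces and the relevant multiplication maps have been pinned down, the minimal right $(\add N^*)$-approximation and its kernel can be identified term by term, completing the verification that the three NCCRs are linked by the predicted chain of mutations.
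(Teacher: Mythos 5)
Your general framework---push the tilting bundles down to reflexive $R$-modules, produce right approximations, identify the kernels, and invoke Theorem \ref{IW mutation equiv}---is indeed the framework the paper uses (its Lemma \ref{lem approx2} is exactly the device that turns short exact sequences of bundles on $Y$ into approximations over $R$ and identifies the resulting mutation functors). But the combinatorial shape you predict for the chain of mutations is wrong, and the step where it is wrong is not repairable within your scheme. The two Toda--Uehara algebras do \emph{not} differ by a single mutation: $\Tilt_{\mathrm{T}} = \stsh_Y\oplus\stsh_Y(-1)\oplus\stsh_Y(-2)\oplus\Sub(-2)$ and $\Tilt_{\mathrm{U}} = \stsh_Y\oplus\stsh_Y(1)\oplus\stsh_Y(2)\oplus\Sub(2)$ share only the summand $R=\phi_*\stsh_Y$, and a mutation at $N=R$ replaces the \emph{entire} complement at once by the kernel of an $(\add R^*)$-approximation, i.e.\ by a syzygy of a free cover; there is no reason this lands on the target module, and in the paper it does not. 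What actually happens is that $\Lambda_{\mathrm{S}}$ and $\Lambda_{\mathrm{U}}$ are one mutation apart (at $W_4=M_0\oplus M_1\oplus M_2$, via $0\to S_1\to V\otimes_{\CC}M_1\to S_2\to 0$, Theorem \ref{mutation2}), while $\Lambda_{\mathrm{T}}$ and $\Lambda_{\mathrm{S}}$ are \emph{nine} mutations apart (Theorem \ref{mutation1}), so your anticipated ``one mutation between the Toda--Uehara pair, two to reach Segal'' cannot be made to work.

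The missing idea is that the chain must pass through intermediate NCCRs that are not among the three given ones (the bundles $\nu\Tilt$, $\nu^2\Tilt$ of Theorem \ref{mutation1} and the image bundles of Lemma \ref{lem tilt1}), and that the elementary exchange sequences are not found by computing $\Hom_R(N,L)$ directly with Borel--Bott--Weil, but are read off from long exact sequences obtained by mutating full exceptional collections on $\LGr(V)$ (e.g.\ $0\to\Sub(-2)\to\stsh(-2)^{\oplus 4}\to\stsh(-1)^{\oplus 4}\to\stsh^{\oplus 4}\to\Sub(1)\to 0$, and the analogous sequences of Lemma \ref{app lem exact}); splicing such a sequence gives one IW mutation per short exact piece, which is why even the single exchange $\Sub(-2)\rightsquigarrow\Sub(1)$ costs three mutations at $W_1$, not one. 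One also needs Theorem \ref{tilt on Y} and Lemma \ref{lem tilt1} to certify that each intermediate bundle is tilting, so that each intermediate endomorphism algebra is an NCCR and Lemma \ref{lem approx2} applies at every step. Without introducing these intermediates and this exceptional-collection input, your proposed term-by-term identification of approximations between the three given modules stalls at the very first pair you try to connect.
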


This result provides an answer to Question \ref{intro Q} (1) and (2).
We prove this theorem by relating IW mutations with \textit{mutations of full exceptional collections} on $\D(\LGr(V))$
(see Section \ref{sect exc colle}).

\subsection{Flop-Flop=Twist result}
Recall that $Y$ and $Y'$ are $5$-dimensional local Calabi-Yau varieties.
It is known that the derived category of a Calabi-Yau variety normally admits an interesting autoequivalence called a \textit{spherical twist}
(see Section \ref{susect: def sph}).
Spherical twists arise naturally in mathematical string theory and homological mirror symmetry.

On the other hand, it is widely observed that spherical twists also appear in the context of birational geometry.
Namely, in many places, it is observed that a spherical twist arises as a difference of two derived equivalences associated to a flop  \cite{ADM15, BB15, Ca12, H17a, DW16, DW15, To07}.
We call this phenomenon ``flop-flop=twist".
The following theorem gives the first example of this phenomenon in the case of the Abuaf flop.

\begin{thm}[= Theorem \ref{thm flop-flop}, Theorem \ref{thm flop-flop2}]
\begin{enumerate}
\item[(1)] Let us consider a spherical twist $\ST_{\Sub[2]}$ around a $1$-term complex $\Sub[2] = \Sub|_{\LGr}[2]$ on the zero section $\LGr \subset Y$.
Then, we have a functor isomorphism
\[ \Seg' \circ \TU'^{-1} \simeq \ST_{\Sub[2]} \in \Auteq(\D(Y)). \]
\item[(2)] An autoequivalence $\TU'^{-1} \circ \Seg'$ of $\D(Y')$ is isomorphic to a spherical twist $\ST_{\stsh_{\PP}(-3)}$ associated to a sheaf $\stsh_{\PP}(-3)$ on the zero-section $\PP \subset Y'$:
\[ \TU'^{-1} \circ \Seg' \simeq \ST_{\stsh_{\PP}(-3)} \in \Auteq(\D(Y')). \]
\end{enumerate}
\end{thm}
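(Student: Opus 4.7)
The plan is to factor both derived equivalences through the NCCRs via the tilting correspondence and then identify the resulting composition of Iyama--Wemyss mutation functors with the prescribed spherical twist.

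For (1), the first step is to unwind the equivalences. By construction,
\[
\Seg' \simeq \RHom_{Y}(\Tilt_{\mathrm{S}},-)^{-1}\circ \RHom_{Y'}(\Tilt'_{\mathrm{S}},-),\qquad
\TU' \simeq \RHom_{Y}(\Tilt_{\mathrm{U}},-)^{-1}\circ \RHom_{Y'}(\Tilt'_{\mathrm{T}},-),
\]
so the autoequivalence $\Seg'\circ\TU'^{-1}$ of $\D(Y)$ factors as
\[
\Seg'\circ\TU'^{-1} \simeq \RHom_{Y}(\Tilt_{\mathrm{S}},-)^{-1}\circ F \circ \RHom_{Y}(\Tilt_{\mathrm{U}},-),
\]
where $F\colon\D(\Lambda_{\mathrm{U}})\xrightarrow{\sim}\D(\Lambda_{\mathrm{S}})$ is the equivalence induced on the NCCR side by the two tilting bundles $\Tilt'_{\mathrm{T}}$ and $\Tilt'_{\mathrm{S}}$ on $Y'$. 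I would then invoke Theorems \ref{mutation1}--\ref{mutation2}, which present $\Lambda_{\mathrm{U}}$ and $\Lambda_{\mathrm{S}}$ as the endpoints of an explicit chain of IW mutations, and use Theorem \ref{IW mutation equiv} to realise $F$ as a composition of mutation functors $\Phi_{N_i}$ (tracking standard twists by line bundles along the way). Substituting back displays $\Seg'\circ\TU'^{-1}$ as an autoequivalence of $\D(Y)$ built entirely from explicit IW mutations --- this is the ``multi-mutation = twist'' shadow of the statement.

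The substantive step, and the expected main obstacle, is matching this composition with $\ST_{\Sub[2]}$. I would first verify that $\Sub|_{\LGr}[2]$ is a spherical object in $\D(Y)$: because $Y=\Tot_{\LGr(V)}(\Sub(-1))$, a Koszul resolution of $\stsh_{\LGr}$ combined with Borel--Bott--Weil computations on $\LGr(V)$ and the Calabi--Yau condition reduces $\RHom_{Y}(\Sub|_{\LGr},\Sub|_{\LGr})$ to $\CC\oplus\CC[-5]$. Next I would unpack the universal approximation triangles defining each $\Phi_{N_i}$: using the explicit summands of $\Tilt_{\mathrm{S}},\Tilt_{\mathrm{T}},\Tilt_{\mathrm{U}}$ computed in the earlier sections, the contributions from summands common to successive stages telescope away, leaving a single non-trivial cone isomorphic to $\Sub|_{\LGr}[2]$. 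Matching this with the canonical triangle
\[
\RHom_{Y}(\Sub|_{\LGr}[2],-)\otimes_{\CC}\Sub|_{\LGr}[2] \to \id \to \ST_{\Sub[2]}
\]
then yields the required isomorphism of autoequivalences.

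For (2), the argument is symmetric: I would factor $\TU'^{-1}\circ\Seg'$ as $\RHom_{Y'}(\Tilt'_{\mathrm{T}},-)^{-1}\circ G \circ \RHom_{Y'}(\Tilt'_{\mathrm{S}},-)$, with $G\colon\D(\Lambda_{\mathrm{S}})\to\D(\Lambda_{\mathrm{U}})$ the equivalence induced on the $Y$-side by $\Tilt_{\mathrm{S}}$ and $\Tilt_{\mathrm{U}}$, and invoke the same chain of IW mutations in the reverse direction. Checking that $\stsh_{\PP}(-3)$ is spherical in $\D(Y')$ is a parallel Koszul-type computation using the description of $Y'$ as the total space of $(\LL^{\bot}/\LL)\otimes\LL^{2}$ over $\PP=\PP(V)$, and the analogous telescoping of approximation triangles should isolate $\stsh_{\PP}(-3)$ as the unique surviving cone, producing the twist triangle for $\ST_{\stsh_{\PP}(-3)}$.
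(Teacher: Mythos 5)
Your overall framework --- factor both equivalences through the tilting equivalences, so that the composite autoequivalence is controlled by what it does to a tilting bundle --- is indeed the paper's starting point (the paper reduces part (2) to the single identity $\ST_{\iota'_*\stsh_{\PP}(-3)}(\Sigma(-1))\simeq \UT(\Sub(1))$, since both composites are representable by the image of the tilting bundle and all line-bundle summands are fixed). But the substantive step in your plan is exactly the one you do not supply: the claim that the approximation triangles of the IW mutation chain ``telescope away, leaving a single non-trivial cone isomorphic to $\Sub|_{\LGr}[2]$'' (resp.\ $\stsh_{\PP}(-3)$) is an assertion, not an argument. Composing the nine mutation functors of Theorem \ref{mutation1} with the one of Theorem \ref{mutation2} gives no visible mechanism producing a cone over a single object, and in fact the genuine ``multi-mutation=twist'' statement in the paper (Theorem \ref{multimutationtwist}) concerns a different composite (four mutations at $W'$) and a different spherical object, $\stsh_{\LGr}(-1)$ --- so the collapse you expect is not even the correct shadow of the flop-flop statement. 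What actually identifies the composite with $\ST_{\stsh_{\PP}(-3)}$ in the paper is a concrete geometric computation: Lemma \ref{key lem ST}, which extends the Euler-type sequence $0\to\Sub(1)\to V\otimes\stsh_Y(1)\to\Sub(2)\to 0$ across the flop and identifies the cokernel on $Y'$ as $\iota'_*\stsh_{\PP}(-3)$ via an $R^1j'_*$ computation, combined with $\RHom_{Y'}(\iota'_*\stsh_{\PP}(-3),\Sigma(-1))\simeq\CC[-2]$ and the uniqueness of the non-split extension, which pins down $\ST_{\iota'_*\stsh_{\PP}(-3)}(\Sigma(-1))$ and matches it with $\UT(\Sub(1))$. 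Nothing in your proposal produces these exchange-type sequences or the Ext computation, so the identification with the specific twist is missing.

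For part (1) the gap is larger still: the paper does not argue ``symmetrically''. It first proves Theorem \ref{FM ker}, giving the Fourier--Mukai kernel $\stsh_{\widehat Y}$ of $\TU'$ (which needs Lemma \ref{key lem ST2}), and uses it to compute $\TU'(\stsh_{Y'}(1))$ and then $\TU'(\iota'_*\stsh_{\PP}(-3))\simeq\iota_*\Sub|_{\LGr}[2]$; part (1) then follows from part (2) by conjugating the twist, since $\Seg'\circ\TU'^{-1}\simeq \TU'\circ(\TU'^{-1}\circ\Seg')\circ\TU'^{-1}\simeq\ST_{\TU'(\stsh_{\PP}(-3))}$. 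Your mutation-theoretic description of $\TU'^{-1}$ (the Corollary after Theorem \ref{mutation2}) gives no handle on $\TU'$ applied to an object like $\iota'_*\stsh_{\PP}(-3)$ that is not built from tilting summands in an evident way, and you offer no substitute for the kernel computation. The sphericality checks you outline (Koszul/normal-bundle plus Borel--Bott--Weil) are fine and essentially reproduce Lemma \ref{sph obj}, but by themselves they do not bridge this gap.
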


To prove the first statement of the theorem, we provide an explicit description of a Fourier-Mukai kernel of an equivalence $\TU'$.
Let $\widetilde{Y}$ be a blowing-up of $Y$ along the zero section $\LGr = \LGr(V)$.
Then, the exceptional divisor $E$ of $\widetilde{Y}$ is isomorphic to $\PP_{\LGr}(\Sub(-1))$.
Thus we can embed $E$ into the product $\LGr(V) \times \PP(V)$ via an injective bundle map $\Sub(-1) \subset V \otimes_{\CC} \stsh_{\LGr}(-1)$.
Set $\widehat{Y} := \widetilde{Y} \cup_E (\LGr(V) \times \PP(V))$.
We prove the following.

\begin{thm}[= Theorem \ref{FM ker}]
The Fourier-Mukai kernel of the equivalence $\TU'$ is given by the structure sheaf of $\widehat{Y}$.
\end{thm}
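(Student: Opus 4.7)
The plan is to identify $\TU'$ with the Fourier-Mukai functor $\Phi := \Phi_{\stsh_{\widehat{Y}}} : \D(Y') \to \D(Y)$ whose kernel is the structure sheaf of $\widehat{Y}$ (viewed as a coherent sheaf on $Y \times Y'$ via the natural closed embedding). Since the Fourier-Mukai kernel of an equivalence is unique up to isomorphism, a natural isomorphism of functors $\Phi \simeq \TU'$ will yield the desired identification of kernels. The chosen route is to compare the two functors through their action on the tilting bundle $\Tilt'_{\mathrm{T}}$.

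First I would make the two projections $\hat{p} : \widehat{Y} \to Y$ and $\hat{q} : \widehat{Y} \to Y'$ induced by the embedding explicit. On the component $\widetilde{Y}$, $\hat{p}$ is the blowup contraction and $\hat{q}$ comes from the flop-type contraction obtained by using the projective bundle structure of $E \to \PP(V)$ together with the total space structure $Y' \to \PP(V)$. On the component $\LGr(V) \times \PP(V)$, $\hat{p}$ is the first projection composed with the zero-section inclusion $\LGr(V) \hookrightarrow Y$, and $\hat{q}$ is the second projection composed with the zero-section inclusion $\PP(V) \hookrightarrow Y'$. One must verify that these agree on the common locus $E$, so that $\widehat{Y}$ is indeed a closed subscheme of $Y \times Y'$.

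Next I would compute $\Phi(\Tilt'_{\mathrm{T}}) = R\hat{p}_* L\hat{q}^* \Tilt'_{\mathrm{T}}$ via the Mayer-Vietoris short exact sequence
\begin{equation*}
0 \to \stsh_{\widehat{Y}} \to \stsh_{\widetilde{Y}} \oplus \stsh_{\LGr(V) \times \PP(V)} \to \stsh_E \to 0,
\end{equation*}
which reduces the problem to three pushforward-pullback calculations. The contributions from $\LGr(V) \times \PP(V)$ and from $E$ are handled by restricting $\Tilt'_{\mathrm{T}}$ to the zero section $\PP(V) \subset Y'$ and applying Bott-Borel-Weil on $\PP(V)$ and $\LGr(V)$ before tensoring with the relevant bundles on the $\LGr(V)$-factor. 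The contribution from $\widetilde{Y}$ is handled by the projection formula for the blowup. Using the explicit presentation of $\Tilt'_{\mathrm{T}}$ obtained from Toda-Uehara's construction, one assembles the three pieces via the exact triangle and identifies the result with $\Tilt_{\mathrm{U}}$.

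Once this object-level calculation is done, the final step is to upgrade it to a natural isomorphism $\Phi \simeq \TU'$. For this I would verify that the algebra isomorphism $\End_{Y'}(\Tilt'_{\mathrm{T}}) \to \End_Y(\Phi(\Tilt'_{\mathrm{T}})) \simeq \End_Y(\Tilt_{\mathrm{U}})$ induced by $\Phi$ coincides with the isomorphism $\Lambda_{\mathrm{U}} \simeq \Lambda_{\mathrm{U}}$ used to define $\TU'$; since tilting equivalences are determined by the image of the tilting bundle together with the corresponding isomorphism of endomorphism algebras, this will complete the proof. I expect the main obstacle to be the explicit computation on the blowup component $\widetilde{Y}$: making the map $\hat{q}|_{\widetilde{Y}} : \widetilde{Y} \to Y'$ sufficiently concrete to control $L\hat{q}^* \Tilt'_{\mathrm{T}}$, and then managing the Mayer-Vietoris bookkeeping (in particular, carefully tracking the $\Lambda_{\mathrm{U}}$-action and the normal bundles of $E$ in the two components) in order to recognize $\Tilt_{\mathrm{U}}$ at the end.
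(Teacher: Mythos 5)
Your proposal takes essentially the same route as the paper: reduce to showing $\FM_{\stsh_{\widehat{Y}}}(\Tilt'_{\mathrm{T}}) \simeq \Tilt_{\mathrm{U}}$ and compute the transform of the summands of $\Tilt'_{\mathrm{T}}$ by means of the same sequence $0 \to \stsh_{\widehat{Y}} \to \stsh_{\widetilde{Y}} \oplus \stsh_{\LGr \times \PP} \to \stsh_E \to 0$. The only differences are minor: the paper pins down the image of $\Sigma(-2)$ not by a direct pullback--pushforward computation but by applying the functor to $0 \to \stsh_{Y'}(-3) \to \Sigma(-2) \to \stsh_{Y'} \to 0$ and invoking $\Ext^2_Y(\stsh_{\LGr}, \stsh_Y(3)) \simeq \CC$ together with Lemma \ref{key lem ST2} (exactly the device that resolves the extension ambiguity you would otherwise have to track), and your final check of endomorphism-algebra compatibility is the step the paper subsumes in ``it is enough to show''.
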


Note that $\widehat{Y} = Y \times_X Y'$.
This is very close to the case of Mukai flops \cite{Ka02, Na03}.

\subsection{Multi-mutation=twist result.}
We also study a spherical twist from the point of view of NCCRs.
Namely, we can understand a spherical twist as a composition of IW mutations in the following way.
Let us consider a bundle on $Y$
\[ \Tilt_{\mathrm{U}, 1} := \stsh_Y(-1) \oplus \stsh_Y \oplus  \stsh_Y(1) \oplus \Sub(1). \]
We can show that this bundle is also a tilting bundle on $Y$.
Put
\begin{align*}
M &:= H^0(Y, \Tilt_{\mathrm{U}, 1}), \\
W' &:= H^0(Y, \stsh_Y \oplus  \stsh_Y(1) \oplus \Sub(1)), ~ \text{and} \\
\Lambda_{\mathrm{U}, 1} &:= \End_Y(\Tilt_{\mathrm{U}, 1}) \simeq \End_R(M).
\end{align*}
We show that there is an isomorphism of $R$-modules
\[ \mu_{W'}(\mu_{W'}(\mu_{W'}(\mu_{W'}(M)))) \simeq M \]
(Proposition \ref{mutation3}).
Furthermore, using Iyama-Wemyss's theorem gives an autoequivalence of $\D(\modu \Lambda_{\mathrm{U}, 1})$
\[ \nu_{W'} := \Phi_{W'} \circ \Phi_{W'} \circ \Phi_{W'} \circ \Phi_{W'} \in \Auteq(\D(\modu \Lambda_{\mathrm{U}, 1})). \]
This autoequivalence corresponds to a spherical twist in the following sense:

\begin{thm}[= Theorem \ref{multimutationtwist}]
The autoequivalence $\nu_{W'}$ of $\D(\modu \Lambda_{\mathrm{U}, 1})$ corresponds to a spherical twist
\[ \ST_{\stsh_{\LGr}(-1)} \in \Auteq(\D(Y)) \]
under the identification $\RHom_Y(\Tilt_{\mathrm{U}, 1}, -) : \D(Y) \xrightarrow{\sim} \D(\modu \Lambda_{\mathrm{U}, 1})$.
\end{thm}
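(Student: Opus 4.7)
The plan is to transport the problem from the derived category of $\Lambda_{\mathrm{U},1}$-modules to $\D(Y)$ via the tilting equivalence, realize each individual IW mutation $\Phi_{W'}$ as a change-of-tilting equivalence on $\D(Y)$, and then identify the composition of the four resulting equivalences with the spherical twist $\ST_{\stsh_{\LGr}(-1)}$.

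First, I would lift each single application of $\Phi_{W'}$ to the geometric side. Writing $\Tilt_{\mathrm{U},1}^{(0)} := \Tilt_{\mathrm{U},1} = W \oplus \stsh_Y(-1)$, where $W := \stsh_Y \oplus \stsh_Y(1) \oplus \Sub(1)$ corresponds to $W'$, the right $(\add W')$-approximation of $M^*$ underlying the mutation comes from a short exact sequence $0 \to U_1 \to W^{\oplus n_0} \to \stsh_Y(-1) \to 0$ of vector bundles on $Y$, obtained by applying $H^0(Y,-)$ and reflexivity. The mutated tilting bundle is $\Tilt_{\mathrm{U},1}^{(1)} := W \oplus U_1$, and $\Phi_{W'}$ is identified with the standard equivalence between $\D(\modu \End_Y(\Tilt_{\mathrm{U},1}^{(0)}))$ and $\D(\modu \End_Y(\Tilt_{\mathrm{U},1}^{(1)}))$. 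Iterating three more times produces a chain of tilting bundles $\Tilt_{\mathrm{U},1}^{(i)} = W \oplus U_i$ for $i = 0,1,2,3,4$, together with short exact sequences $0 \to U_{i+1} \to W^{\oplus n_i} \to U_i \to 0$. Proposition \ref{mutation3} gives $\Tilt_{\mathrm{U},1}^{(4)} \simeq \Tilt_{\mathrm{U},1}^{(0)}$, so the composition of the four change-of-tilting equivalences defines an autoequivalence $\Psi \in \Auteq(\D(Y))$ that corresponds to $\nu_{W'}$ under $\RHom_Y(\Tilt_{\mathrm{U},1}, -)$. The intermediate bundles $U_1, U_2, U_3$ should be computed explicitly via the tautological sequence $0 \to \Sub \to V \otimes \stsh \to V/\Sub \to 0$ on $\LGr(V)$ pulled back to $Y$, in the same spirit as the computations behind Theorem \ref{mutation1} and Theorem \ref{mutation2}.

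Next, I would verify that $\stsh_{\LGr}(-1)$ is a spherical object in $\D(Y)$. Since $Y$ is a $5$-dimensional local Calabi-Yau variety and $\LGr$ is a zero section with normal bundle $\Sub(-1)$, the Koszul resolution of $\stsh_{\LGr}$ gives $\RHom_Y(\stsh_{\LGr}(-1), \stsh_{\LGr}(-1)) = \bigoplus_i \RGamma(\LGr, \wedge^i \Sub(-1)^\vee)[-i]$, which by Borel--Weil--Bott on $\LGr(V)$ collapses to $\CC \oplus \CC[-5]$, yielding sphericality. To identify $\Psi$ with $\ST_{\stsh_{\LGr}(-1)}$, I plan to assemble the four mutation triangles into a single Fourier--Mukai kernel for $\Psi$ on $Y \times Y$, and then recognize this kernel as the cone of the evaluation map $\stsh_{\LGr}(-1)^\vee \boxtimes \stsh_{\LGr}(-1) \to \stsh_\Delta$ defining the spherical twist; alternatively, one can check that $\Psi$ acts trivially on the orthogonal complement of $\stsh_{\LGr}(-1)$ and by the appropriate shift on $\stsh_{\LGr}(-1)$ itself, which characterizes the twist uniquely since $\D(Y)$ is generated by $\stsh_{\LGr}(-1)$ together with its right orthogonal.

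The main obstacle is this final identification: it is easy to produce \emph{an} autoequivalence from the four mutations, but showing functorially that it equals $\ST_{\stsh_{\LGr}(-1)}$ requires matching the concatenation of the four short exact sequences defining the $U_i$ with the single evaluation triangle of the twist. I expect this to amount to recognizing that the alternating sum of the terms $W^{\oplus n_i}$, after suitable cancellation governed by $H^*(\LGr, \wedge^\bullet \Sub(-1)^\vee \otimes W|_\LGr)$, collapses exactly to a resolution of $\stsh_{\LGr}(-1)$ tensored with its dual, which is precisely the input needed for the spherical twist triangle.
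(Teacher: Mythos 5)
There is a genuine gap, and it sits at the heart of your strategy: you assume that \emph{each} of the four mutations at $W'$ is realized by a short exact sequence of vector bundles on $Y$ with middle term in $\add(\stsh_Y\oplus\stsh_Y(1)\oplus\Sub(1))$, producing a chain of tilting bundles $\Tilt_{\mathrm{U},1}^{(i)}=W\oplus U_i$ on $Y$ with $\Tilt_{\mathrm{U},1}^{(4)}\simeq\Tilt_{\mathrm{U},1}^{(0)}$. If such a chain existed, then by Lemma \ref{lem approx2}\,(2) each $\Phi_{W'}$ would intertwine the identifications $\RHom_Y(\Tilt_{\mathrm{U},1}^{(i)},-)$ and $\RHom_Y(\Tilt_{\mathrm{U},1}^{(i+1)},-)$, so the composite $\nu_{W'}$ would correspond to the \emph{identity} of $\D(Y)$ under $\RHom_Y(\Tilt_{\mathrm{U},1},-)$ --- contradicting the very statement you want to prove. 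The point is that only three of the four mutations lift to bundle-level exact sequences on $Y$ (those coming from the sequence $0\to\stsh_Y(-1)\to\stsh_Y^{\oplus5}\to\Sub(1)^{\oplus4}\to\stsh_Y(1)^{\oplus5}\to\stsh_Y(2)\to0$ used in Theorem \ref{mutation1}). The fourth exchange sequence, $0\to M_2\to S_1\to M_{-1}\to0$, is realized on $Y'$ by $0\to\stsh_{Y'}(-2)\to\Sigma(-1)\to\stsh_{Y'}(1)\to0$ (Proposition \ref{mutation3}), and on $Y$ the corresponding map $\Sub(1)\to\stsh_Y(-1)$ is \emph{not} surjective: its cokernel is exactly $\stsh_{\LGr}(-1)$ by Lemma \ref{key lem ST2} (twisted by $\stsh_Y(-1)$). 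This failure of exactness on $Y$ is precisely the source of the spherical twist, and your proposal never sees it.

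The missing idea, which is how the paper argues, is to split the four mutations asymmetrically between the two sides of the flop: the first three are change-of-tilting equivalences on $Y$ and give $\Psi_{\mathrm S}\circ\Psi_{\mathrm{U},1}^{-1}$ (Theorem \ref{mutation1}), while the last one is a change-of-tilting equivalence on $Y'$ and gives $\Psi'_{\mathrm{T},1}\circ(\Psi'_{\mathrm S})^{-1}$ (Proposition \ref{mutation3}). Hence $\nu_{W'}$ corresponds under $\Psi_{\mathrm{U},1}$ to the ``flop--flop'' autoequivalence $\TU_1'\circ\Seg$ of $\D(Y)$, and the identification of this composite with $\ST_{\stsh_{\LGr}(-1)}$ is Theorem \ref{FFT}, whose proof requires the geometric computations $\Seg'(\stsh_{Y'}(1))\simeq I_{\LGr/Y}(-1)$ and $\ST_{\stsh_{\LGr}(-1)}(I_{\LGr/Y}(-1))\simeq\stsh_Y(-1)$ resting on Lemma \ref{key lem ST2}. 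Your fallback of checking the action on $\stsh_{\LGr}(-1)$ and its orthogonal would in any case need a justification that such data determines the autoequivalence up to isomorphism of functors, which is not automatic; but the primary defect is that without passing through $Y'$ (or some equivalent mechanism detecting the cokernel supported on $\LGr$), the mutation chain you build on $Y$ collapses to the identity rather than to the twist. Your sphericality check for $\stsh_{\LGr}(-1)$ is fine in spirit (it is the twist of the case handled in Lemma \ref{sph obj}, with $\wedge^q$ of the normal bundle $\Sub(-1)$, not its dual, appearing in the local Ext sheaves), but it does not address the main identification.
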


Donovan and Wemyss proved that, in the case of $3$-fold flops, a composition of \textbf{two} IW mutation functors corresponds to a spherical-like twist \cite{DW16}.
In the case of Mukai flops, the author observed that a composition of \textbf{many} IW mutation functors corresponds to a P-twist \cite{H17a}.
The theorem above provides a new example of this phenomenon for the Abuaf flop.

\subsection{Plan of the article}
In Section \ref{sect: prelim}, we provide some basic definitions and theorems we use in later sections.
In Section \ref{sect: mutation}, we give an explicit description of the tilting bundle obtained by Toda-Uehara's construction.
In addition, we show that NCCRs obtained as the endomorphism rings of Toda-Uehara's or Segal's tilting bundle are connected by repeating IW mutations.
In Section \ref{sect: twist}, we prove ``flop-flop=twist" results and a "multi-mutation=twist" result for the Abuaf flop and provide an explicit description of the Fourier-Mukai kernel of the functor $\TU'$.
In Section \ref{sect exc colle}, we explain the definition of exceptional collections and its mutation.
As an application of them, we explain how to find a resolution of a sheaf from an exceptional collection.

\subsection{Notations. } In this paper, we always work over the complex number field $\mathbb{C}$. Moreover, we adopt the following notations.

\begin{enumerate}
\item[$\bullet$] $V = \mathbb{C}^4$ : $4$-dimensional symplectic vector space.
\item[$\bullet$] $\mathbb{P}(V) := V \setminus \{0\} / \mathbb{C}^{\times}$ : projectivization of a vector space $V$.
\item[$\bullet$] $\LGr(V)$ : the Lagrangian Grassmannian of $V$.
\item[$\bullet$] $\Tot(\EE) := \Spec_X \Sym_X \EE^*$ : the total space of a vector bundle $\EE$.
\item[$\bullet$] $\modu(A)$ : the category of finitely generated right $A$-modules.
\item[$\bullet$] $\add(M)$ : the additive closure of $M$.
\item[$\bullet$] $\D(\mathcal{A})$ : the (bounded) derived category of an abelian category $\mathcal{A}$.
\item[$\bullet$] $\D(X) := \D(\mathrm{coh}(X))$ : the derived category of coherent sheaves on a variety $X$.
\item[$\bullet$] $\FM_{\mathcal{P}}$, $\FM_{\mathcal{P}}^{X \to Y}$ : A Fourier-Mukai functor from $\D(X)$ to $\D(Y)$ whose kernel is $\mathcal{P} \in \D(X \times Y)$.
\item[$\bullet$] $\ST_{\EE}$ : the spherical twist around a spherical object $\EE$.
\item[$\bullet$] $\mu_N(M)$ : the left (Iyama-Wemyss) mutation of $M$ at $N$.
\item[$\bullet$] $\Phi_N : \D(\modu \End_R(M)) \to \D(\modu \End_R(\mu_N(M)))$ : the (Iyama-Wemyss) mutation functor.
\item[$\bullet$] $\Sym^k_R M$ (resp. $\Sym^k_X \EE$) : $k$-th symmetric product of an $R$-module $M$ (resp. a vector bundle $\EE$ on $X$).

\end{enumerate}

\vspace{3mm}

\noindent
\textbf{Acknowledgements.}
The author would like to express his sincere gratitude to his supervisor Professor Yasunari Nagai for valuable comments and continuous encouragement.
The author is also grateful to Professor Michel Van den Bergh for beneficial conversations and helpful advices, and to Professor Ed Segal for reading the draft of this article and giving many useful comments.
The author would like to thank Hayato Morimura for his careful reading.
Thanks to Morimura's comment on the previous version of this article, the author could improve the presentation in this article and fix many typos.
Finally, the author would like to thank the referee for a lot of helpful suggestions and comments.

This work was done during the author's stay at Hasselt University in Belgium.
This work is supported by Grant-in-Aid for JSPS Research Fellow 17J00857.

\section{Preliminaries} \label{sect: prelim}

\subsection{Abuaf flop}

This section explains the geometry of the Abuaf flop briefly.
For more details, see \cite{Seg16}.
Let $V$ be a four dimensional symplectic vector space.
Let $\LGr(V)$ be the Lagrangian Grassmannian of $V$ and $\Sub \subset V \otimes_{\CC} \stsh_{\LGr(V)}$ the rank two universal subbundle.
Note that $\stsh_{\LGr(V)}(1) := \bigwedge^2 \Sub^*$ is the ample generator of $\Pic(\LGr(V))$,
and this polarisation identifies the Lagrangian Grassmannian $\LGr(V)$ with the quadric threefold $Q_3 \subset \PP^4$.
Note that the canonical embedding $\LGr(V) \subset \Gr(2, V)$ coincides with a hyperplane section $Q_3 = Q_4 \cap H \subset Q_4 \subset \PP^5$.
Let $Y$ be the total space of a vector bundle $\Sub(-1)$:
\[ Y := \Tot(\Sub(-1)) \xrightarrow{\pi} \LGr(V). \]
Since $\bigwedge^2(\Sub(-1)) \simeq \stsh_{\LGr(V)}(-3) \simeq \omega_{\LGr(V)}$, the variety $Y$ is a five dimensional (local) Calabi-Yau variety.

Let $\LGr \subset Y$ be the zero section.
Then, it is possible to contract $\LGr$ and this gives a flopping contraction $\phi : Y \to X$.
The algebra $R := \phi_*\stsh_Y$ is a normal Gorenstein algebra such that $X = \Spec R$.

Next, consider the $3$-dimensional projective space $\PP(V)$.
The symplectic form on $V$ gives an embedding of the universal line bundle $\LL = \stsh_{\PP(V)}(-1)$ into $\Omega_{\PP(V)}^1(1) \simeq \LL^{\bot}$.
Consider  the total space 
\[ Y' := \Tot((\LL^{\bot}/\LL) \otimes \LL^2) \xrightarrow{\pi'} \PP(V) \]
of a vector bundle $(\LL^{\bot}/\LL) \otimes \LL^2$, 
which is another five dimensional (local) Calabi-Yau variety.
The zero section $\PP \subset Y'$ can be contracted, and gives a flopping contraction $\phi' : Y' \to X$.
Combining $Y$ and $Y'$ provides a diagram of a flop
\[ \begin{tikzcd}
Y \arrow[rd, "\phi"] & & Y'. \arrow[ld, "\phi'"'] \\
 & X & 
\end{tikzcd} \]
The affine variety $X$ has a unique singular point  $o \in X$.
In contrast to the case of the Atiyah flop or the Mukai flop, two fibers $\phi^{-1}(o) = \LGr$ and $\phi'^{-1}(o) = \PP$ are not isomorphic to each other.
Since this interesting flop was first provided by Abuaf, we call this flop the \textit{Abuaf flop}.

\subsection{Non-commutative crepant resolution and tilting bundle}

\begin{defi} \label{def NCCR} \rm
Let $R$ be a normal Gorenstein (commutative) algebra, and $M$ a non-zero reflexive $R$-module. 
We set $\Lambda := \End_R(M)$.
We say that the $R$-algebra $\Lambda$ is a \textit{non-commutative crepant resolution (=NCCR)} of $R$, or $M$ gives an NCCR of $R$,
if  $\gldim \Lambda < \infty$ and $\Lambda$ is maximal Cohen-Macaulay as an $R$-module.
\end{defi}

The notion of NCCR is a non-commutative analog of the notion of crepant resolutions.
The following conjecture is due to Bondal, Orlov, and Van den Bergh.

\begin{conj}[\cite{VdB04b}, Conjecture 4.6] \label{BOV}
Let $R$ be a Gorenstein $\mathbb{C}$-algebra.
Then, all crepant resolutions of $R$ and all NCCRs of $R$ are derived equivalent.
\end{conj}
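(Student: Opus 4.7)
The statement is Van den Bergh's conjecture (stemming from ideas of Bondal, Orlov, and Kawamata) and remains open in full generality, so my \emph{plan} is really a strategy outline — one that has been executed in many special cases rather than a path to a complete proof. The goal is to build a bridge between geometric crepant resolutions and NCCRs via tilting theory, and then to connect different NCCRs by an algebraic procedure.

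First, given a commutative crepant resolution $\phi : Y \to X = \Spec R$, I would construct a tilting bundle $\Tilt$ on $Y$ whose endomorphism ring $\Lambda := \End_Y(\Tilt)$ is an NCCR of $R$. This immediately yields a derived equivalence $\RHom_Y(\Tilt, -) : \D(Y) \xrightarrow{\sim} \D(\modu \Lambda)$, so that $Y$ and $\Lambda$ are on the same footing in the derived world. Van den Bergh established this construction in dimension three, and Toda-Uehara provide an inductive construction under the assumptions reviewed later in this paper; the existence of such a bundle for an arbitrary crepant resolution of a Gorenstein singularity is, however, subtle.

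Next, given two NCCRs $\End_R(M)$ and $\End_R(N)$, I would try to connect them by a sequence of Iyama-Wemyss mutations: each single mutation $M \leadsto \mu_W(M)$ induces a derived equivalence $\Phi_W$, so a zig-zag of mutations assembles into an equivalence $\D(\modu \End_R(M)) \simeq \D(\modu \End_R(N))$. Composing the two tilting equivalences at the geometric ends with such a chain of mutation equivalences would then link any two crepant resolutions and any two NCCRs of $R$ in a single derived bridge.

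The main obstacle is the second step: there is no general theorem asserting that any two NCCRs of a fixed $R$ are connected by IW mutations. Even in specific classes (for example Nakajima's result for three-dimensional toric singularities associated with reflexive polygons, or the author's earlier work on the minimal nilpotent orbit of type A) substantial combinatorial or representation-theoretic effort is required to produce the mutation chain. A secondary obstacle is that, in dimension at least four, a canonical construction of a tilting bundle on a given crepant resolution is not always available; this is precisely why the present paper proceeds by verifying Toda-Uehara's assumptions explicitly for the Abuaf flop, computing the resulting tilting bundles, and then matching the resulting NCCR $\Lambda_{\mathrm{T}}$ with Segal's NCCR $\Lambda_{\mathrm{S}}$ through a concrete sequence of IW mutations — thereby realising the above strategy in one more non-trivial higher-dimensional example.
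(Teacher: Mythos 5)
This statement is Van den Bergh's conjecture, which the paper quotes only as motivation and does not (and could not) prove, so there is no proof in the paper to compare against; you are right not to claim one. Your outlined strategy --- tilting bundles giving $\D(Y) \simeq \D(\modu \Lambda)$ for each crepant resolution, plus chains of Iyama--Wemyss mutation equivalences linking the various NCCRs --- is precisely the approach the paper carries out for the special case of the Abuaf flop, and your assessment of the open obstacles (no general tilting-bundle construction in dimension $\geq 4$, no general connectedness of NCCRs under IW mutation) is accurate.
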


The theory of NCCRs has strong relationship to the theory of tilting bundles.

\begin{defi} \rm
Let $X$ be a variety. A vector bundle $\Tilt$ (of finite rank) on $X$ is called a \textit{partial tilting bundle} if
\begin{enumerate}
\item[(1)] $\Ext_X^i(\Tilt, \Tilt) = 0$ for $i \neq 0$.
\end{enumerate}
Further, if a partial tilting bundle $\Tilt$ satisfies the following condition,
\begin{enumerate}
\item[(2)] $\Tilt$ generates the category $\mathrm{D}(\Qcoh(X))$, i.e. for $E \in \mathrm{D}(\Qcoh(X))$, $\RHom_X(\Tilt, E) = 0$ implies $E = 0$
\end{enumerate}
we say that the bundle $\Tilt$ is a \textit{tilting bundle}.
\end{defi}

\begin{eg} \rm
In \cite{Bei79}, Beilinson showed that the following vector bundles on a projective space $\mathbb{P}^n$
\[ T = \bigoplus_{k=0}^n \stsh_{\mathbb{P}^n}(k), ~~~ T' = \bigoplus_{k=0}^n \Omega_{\mathbb{P}^n}^k(k+1) \]
are tilting bundles.
Note that these tilting bundles come from full strong exceptional collections of the derived category $\D(\PP^n)$ of $\PP^n$ that are called the \textit{Beilinson collections}.
\end{eg}

A tilting bundle on a variety gives an equivalence between the derived category of the variety and the derived category of a non-commutative algebra that is given as the endomorphism ring of the tilting bundle.
This is a generalisation of classical Morita theory.

\begin{thm} \label{equiv tilting}
Let $\Tilt \in \D(X)$ be a tilting bundle on a smooth quasi-projective variety $X$. 
If we set $\Lambda := \End_X(\Tilt)$, we have an equivalence of categories
\[ \RHom_X(\Tilt, -) : \D(X) \xrightarrow{\sim} \D(\modu(\Lambda)), \]
and the quasi-inverse of this functor is given by
\[ - \otimes_{\Lambda} \Tilt : \D(\modu(\Lambda)) \xrightarrow{\sim} \D(X). \]
\end{thm}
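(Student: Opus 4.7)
The plan is to establish the equivalence in the standard way by first working with unbounded derived categories of quasi-coherent sheaves and all $\Lambda$-modules, producing the equivalence there, and then restricting to the bounded coherent sides using smoothness of $X$. Concretely, I would set up the adjoint pair $G := (-)\otimes_{\Lambda}^{L}\Tilt : \D(\modu\Lambda) \rightleftarrows \D(\Qcoh X) : \RHom_X(\Tilt,-) =: F$, checking adjunction by reducing to the case of a free $\Lambda$-module on one generator, where both sides compute $\RHom_X(\Tilt,E)$. The goal is then to prove the unit $\id \to F\circ G$ and the counit $G\circ F \to \id$ are isomorphisms.

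For the unit, I would evaluate on $\Lambda$ itself: $F(G(\Lambda)) = \RHom_X(\Tilt,\Tilt) = \End_X(\Tilt) = \Lambda$, the key input being the partial-tilting condition $\Ext_X^i(\Tilt,\Tilt)=0$ for $i\neq 0$. Both $F$ and $G$ commute with arbitrary direct sums (for $F$ this uses that $\Tilt$ is a compact object, which holds because it is a perfect complex on a quasi-projective scheme), and they are triangulated, so the full subcategory of $\D(\modu\Lambda)$ on which the unit is an isomorphism is a localizing triangulated subcategory containing $\Lambda$; since $\Lambda$ generates, the unit is an isomorphism everywhere. For the counit, the same evaluation $G(F(\Tilt)) = \Lambda\otimes_{\Lambda}^{L}\Tilt \simeq \Tilt$ shows the locus where $G\circ F \to \id$ is an isomorphism is a localizing triangulated subcategory of $\D(\Qcoh X)$ containing $\Tilt$; the classical generation hypothesis (2) in the definition of a tilting bundle, upgraded in the usual way to generation of $\D(\Qcoh X)$ by its compact object $\Tilt$, then shows the counit is an isomorphism on all of $\D(\Qcoh X)$.

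To descend to the bounded coherent statement actually asserted, I would use that $X$ is smooth and quasi-projective so that $\Tilt$ is perfect and classically generates the category of perfect complexes, hence compact objects correspond under the equivalence. On the $\Lambda$-side, the equivalence sends $\Lambda$ to $\Tilt$, hence perfect complexes over $\Lambda$ to perfect complexes on $X$, i.e.\ to $\D(X) = \D^{b}(\coh X)$ (again by smoothness). Conversely, the partial tilting hypothesis together with $\Tilt$ being a vector bundle implies that $\RHom_X(\Tilt,\sh)$ has bounded and coherent cohomology for $\sh \in \coh X$, and is a finitely generated $\Lambda$-module (via the projection formula to $X\to\Spec R$ and $\sh \mapsto \phi_{*}\lhom_X(\Tilt,\sh)$), so $F$ sends $\D^{b}(\coh X)$ into $\D^{b}(\modu\Lambda)$. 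Finite global dimension of $\Lambda$, which follows from $X$ being smooth and $\Tilt$ being a generator, ensures $\D^{b}(\modu\Lambda)$ coincides with perfect complexes, closing the loop.

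The main obstacle is this last step, the restriction of the unbounded equivalence to the small bounded categories; the unbounded equivalence is essentially formal once one has adjunction, the tilting Ext-vanishing, and generation, whereas identifying the two bounded subcategories requires simultaneously controlling finite generation on the algebra side and boundedness/coherence on the geometric side. This is where smoothness and quasi-projectivity of $X$ are genuinely used. Once this is in place, the explicit formulas for the functor and its quasi-inverse are built into the construction of the adjoint pair.
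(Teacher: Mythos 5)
Your argument is correct and is essentially the proof the paper points to: the paper gives no proof of Theorem \ref{equiv tilting} itself but refers to \cite[Theorem 7.6]{HV07} and \cite[Lemma 3.3]{TU10}, which proceed exactly as you do (adjunction on the unbounded categories, unit and counit checked on the compact generators $\Lambda$ and $\Tilt$, then restriction to the bounded coherent subcategories using smoothness). The only point to watch is that the finite generation of $\Ext^i_X(\Tilt,\sh)$ over $\Lambda$ in your final descent step genuinely uses that $X$ is projective over an affine base (as holds for $Y, Y' \to \Spec R$ in this paper and is assumed in the cited references), not merely that $X$ is quasi-projective, which is exactly why you found yourself invoking a morphism to $\Spec R$ that the bare statement does not provide.
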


For the proof of Theorem \ref{equiv tilting}, see \cite[Theorem 7.6]{HV07} or \cite[Lemma 3.3]{TU10}.
A tilting bundle on a crepant resolution provides an NCCR.

\begin{prop} \label{tilt to NCCR}
Let $\phi : Y \to X = \Spec R$ be a crepant resolution of an affine normal Gorenstein variety $X$.
Let $\Tilt$ be a tilting bundle on $Y$ and assume that $\Tilt$ contains a trivial line bundle $\stsh_Y$ as a direct summand.
Then, we have an isomorphism $\End_Y(\Tilt) \simeq \End_R(\phi_* \Tilt)$.
In particular, the $R$-module $\phi_* \Tilt$ gives an NCCR of $R$.
\end{prop}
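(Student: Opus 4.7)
The plan is to split the argument into two tasks: first the ring isomorphism, then verifying the three conditions of Lemma \ref{def lem NCCR} (reflexivity, finite global dimension, maximal Cohen--Macaulayness).

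For the isomorphism, the key input is the hypothesis that $\stsh_Y$ is a direct summand of $\Tilt$. This gives
\[ H^i(Y, \Tilt) = \Ext^i_Y(\stsh_Y, \Tilt) \subseteq \Ext^i_Y(\Tilt, \Tilt) = 0 \quad (i > 0), \]
and since $X$ is affine, $R^i\phi_* \Tilt = 0$ for $i > 0$; the analogous vanishing $R^i\phi_*(\Tilt^* \otimes \Tilt) = 0$ follows directly from the tilting property applied to the locally free sheaf $\lhom(\Tilt,\Tilt) = \Tilt^* \otimes \Tilt$. Now let $U \subseteq X$ be the smooth locus. By normality of $X$, $\codim_X(X \setminus U) \geq 2$, and crepancy forces $\phi$ to restrict to an isomorphism over $U$. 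On $U$, both $\phi_*(\Tilt^* \otimes \Tilt)$ and $\lhom_X(\phi_*\Tilt, \phi_*\Tilt)$ coincide with $\Tilt^* \otimes \Tilt|_{\phi^{-1}(U)}$. Both are reflexive on the normal scheme $X$: the first as the pushforward of a locally free sheaf under a proper birational morphism to a normal target, the second because $\lhom$ into a torsion-free sheaf on a normal scheme is reflexive. Two reflexive sheaves agreeing outside codimension two are isomorphic, so $\phi_*(\Tilt^*\otimes\Tilt) \simeq \lhom_X(\phi_*\Tilt, \phi_*\Tilt)$; taking global sections over the affine $X$ yields $\End_Y(\Tilt) \simeq \End_R(\phi_*\Tilt)$.

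For the NCCR conditions, $\phi_*\Tilt$ is non-zero and reflexive by the same pushforward argument. Finite global dimension of $\Lambda := \End_R(\phi_*\Tilt) \simeq \End_Y(\Tilt)$ follows from Theorem \ref{equiv tilting}, which gives $\D(\modu \Lambda) \simeq \D(Y)$; since $Y$ is smooth of finite dimension, Hom amplitudes in $\D(Y)$ are uniformly bounded, forcing $\gldim \Lambda < \infty$. For maximal Cohen--Macaulayness of $\Lambda \simeq \phi_*(\Tilt^* \otimes \Tilt)$ as an $R$-module, I would apply Grothendieck duality to $\mathcal{E} := \Tilt^* \otimes \Tilt$, which is locally free and self-dual ($\mathcal{E}^* \simeq \mathcal{E}$). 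Using $\phi^! \omega_X = \omega_Y = \phi^*\omega_X$ (crepant), the projection formula, and $R\phi_*\mathcal{E} = \phi_*\mathcal{E}$, one obtains
\[ R\lhom_X(\phi_*\mathcal{E}, \omega_X) \simeq R\phi_*(\mathcal{E}^* \otimes \omega_Y) \simeq \phi_*\mathcal{E} \otimes \omega_X, \]
which is concentrated in degree $0$. Hence $\lext^i_X(\phi_*\mathcal{E}, \omega_X) = 0$ for $i > 0$, and over the Cohen--Macaulay ring $R$ this is precisely the maximal Cohen--Macaulay condition.

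The main obstacle I anticipate is the reflexive-sheaf comparison in the first step, since it requires both the codimension-two extension principle on a normal scheme and the reflexivity of a pushforward under a proper birational crepant morphism. Once that identification is secured, the Grothendieck-duality computation in the final step is clean, thanks to the self-duality of $\mathcal{E}$ and the crepant hypothesis collapsing $\phi^!$ to $\phi^*$.
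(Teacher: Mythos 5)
The overall architecture of your argument (compare the two endomorphism algebras away from a codimension-two locus using reflexivity, get finite global dimension from the tilting equivalence, get Cohen--Macaulayness from Grothendieck duality plus crepancy) is the same as the paper's, but the reflexivity step has a genuine gap. You justify reflexivity of $\phi_*\Tilt$ and of $\phi_*(\Tilt^*\otimes\Tilt)$ by the principle that the pushforward of a locally free sheaf under a proper birational morphism to a normal target is reflexive. That principle is false, even for crepant resolutions: for the minimal resolution $\phi:Y\to X$ of the $A_1$ surface singularity, with exceptional curve $E$, the line bundle $\stsh_Y(-E)$ pushes forward to the maximal ideal $\mathfrak{m}\subset R$, which is torsion free but not reflexive (its reflexive hull is $R$). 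The failure occurs because the preimage of a codimension-two subset of $X$ can be a divisor in $Y$, so sections need not extend. Your auxiliary claim that $\lhom$ into a merely torsion-free sheaf on a normal scheme is reflexive is also false: $\Hom_R(R,\mathfrak{m})=\mathfrak{m}$. So both reflexivity inputs of your comparison argument, as well as the reflexivity of $\phi_*\Tilt$ demanded by Definition \ref{def NCCR}, are unsupported as written. (By contrast, your claim that $\phi$ is an isomorphism over the smooth locus is correct but heavier than needed; any proper birational morphism onto a normal variety is an isomorphism over an open set whose complement has codimension at least two.)

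The repair is exactly where the hypothesis that $\stsh_Y$ is a direct summand of $\Tilt$ must be used a second time, which you never do: it gives $\Ext^i_Y(\Tilt,\stsh_Y)=0$ for $i>0$, i.e. $R^i\phi_*(\Tilt^*)=0$, and then the Grothendieck-duality computation you carry out for $\mathcal{E}=\Tilt^*\otimes\Tilt$ applies verbatim to $\Tilt$ itself: $R\lhom_X(\phi_*\Tilt,\omega_X)\simeq R\phi_*(\Tilt^*\otimes\omega_Y)\simeq \phi_*(\Tilt^*)\otimes\omega_X$ is concentrated in degree $0$, so $\phi_*\Tilt$ is maximal Cohen--Macaulay, hence reflexive over the normal Cohen--Macaulay domain $R$ (Proposition \ref{CM ref 1}); then $\lhom_X(\phi_*\Tilt,\phi_*\Tilt)$ is reflexive by Proposition \ref{CM ref 2} (which requires the modules to be Cohen--Macaulay, not merely torsion free), and $\phi_*(\Tilt^*\otimes\Tilt)$ is Cohen--Macaulay and reflexive by your own duality step. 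With these corrections the codimension-two comparison and the rest of your argument go through, and the result is in substance the paper's proof, which deduces Cohen--Macaulayness of $\phi_*\Tilt$ and $\phi_*(\Tilt^*\otimes\Tilt)$ from the two vanishings $H^i(Y,-)=0=\Ext_Y^i(-,\stsh_Y)$ and only then invokes reflexivity.
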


To prove this proposition, we need the following three propositions.


\begin{prop}[c.f. \cite{H17a} Lemma 3.1]
Let $\phi : Y \to X = \Spec R$ be a crepant resolution of a Gorenstein affine scheme $X$ and $\sh$ be a coherent sheaf on $Y$.
Assume that
\[ H^i(Y, \sh) = 0 = \Ext_Y^i(\sh, \stsh_Y) \]
for all $i > 0$.
Then, an $R$-module $\phi_*\sh$ is Cohen-Macaulay.
\end{prop}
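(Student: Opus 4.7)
\medskip

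\noindent\textit{Proof plan.}
The plan is to combine an affineness reduction, Grothendieck duality, and local duality, which is a standard route from an $\Ext$-vanishing hypothesis to a Cohen-Macaulay conclusion.

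First I would exploit that $X = \Spec R$ is affine: for any bounded complex $K^{\bullet}$ of quasi-coherent sheaves on $X$, higher cohomology of each cohomology sheaf vanishes, so the hypercohomology spectral sequence degenerates to give $\mathbb{H}^i(X, K^{\bullet}) = \Gamma(X, \mathcal{H}^i K^{\bullet})$. Applying this to $R\phi_{*} \sh$ and $R\phi_{*} R\lhom_Y(\sh, \stsh_Y)$, whose hypercohomologies on $X$ compute $H^i(Y, \sh)$ and $\Ext^i_Y(\sh, \stsh_Y)$ respectively, the vanishing hypotheses upgrade (via quasi-coherence of the $\mathcal{H}^{i}$ on the affine base) to the degree-$0$ identifications
\[ R\phi_{*} \sh \simeq \phi_{*} \sh, \qquad R\phi_{*} R\lhom_Y(\sh, \stsh_Y) \simeq \phi_{*} \lhom_Y(\sh, \stsh_Y). \]

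Next I would invoke Grothendieck duality for the proper morphism $\phi$. Since $X$ is Gorenstein, $\omega_X$ is an invertible sheaf, and crepancy supplies $\phi^{!} \omega_X \simeq \omega_Y \simeq \phi^{*} \omega_X$. Combined with the projection formula this yields
\[ R\lhom_X(\phi_{*} \sh, \omega_X) \simeq R\phi_{*} R\lhom_Y(\sh, \stsh_Y) \otimes_{\stsh_X} \omega_X, \]
which by Step~1 sits in degree zero (tensoring by the invertible $\omega_X$ preserves this). Taking global sections and setting $\omega_R := \Gamma(X, \omega_X)$, we obtain $\Ext^i_R(\phi_{*}\sh, \omega_R) = 0$ for every $i > 0$.

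Finally, local duality closes the argument. At each prime $\mathfrak{p} \subset R$, $(\omega_R)_{\mathfrak{p}}$ is the canonical module of the Gorenstein local ring $R_{\mathfrak{p}}$, and the vanishing of $\Ext^i_{R_{\mathfrak{p}}}((\phi_{*}\sh)_{\mathfrak{p}}, \omega_{R_{\mathfrak{p}}})$ for $i > 0$ is equivalent, via local duality, to $(\phi_{*}\sh)_{\mathfrak{p}}$ being maximal Cohen-Macaulay over $R_{\mathfrak{p}}$. Running over all $\mathfrak{p}$ gives the conclusion. I expect the main obstacle to lie in Step~2: one must be careful with dualizing-complex conventions and verify that crepancy really delivers $\phi^{!} \omega_X \simeq \phi^{*} \omega_X$ on the nose, so that the projection formula cleanly reduces Grothendieck duality to the vanishing already established in Step~1.
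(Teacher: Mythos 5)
Your argument is correct: the affine-base reduction of the two vanishing hypotheses to the statements that $R\phi_*\sh$ and $R\phi_*R\lhom_Y(\sh,\stsh_Y)$ are concentrated in degree zero, Grothendieck duality combined with crepancy ($\phi^{!}\omega_X \simeq \omega_Y \simeq \phi^{*}\omega_X$, with no shift since $\dim Y = \dim X$) to get $\Ext^i_R(\phi_*\sh,\omega_R)=0$ for $i>0$, and local duality over the Gorenstein localizations together yield exactly the (maximal) Cohen--Macaulay conclusion. The paper itself gives no proof here, citing \cite{H17a} Lemma 3.1, and that cited argument is this same duality computation, so your route matches the intended one.
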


\begin{prop}[see e.g. \cite{H17a} Proposition 2.8] \label{CM ref 1}
Let $R$ be a normal Cohen-Macaulay domain and $M$ a (maximal) Cohen-Macaulay $R$-module.
Then, $M$ is reflexive.
\end{prop}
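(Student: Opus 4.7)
The plan is to combine two standard ingredients: (i) over a normal Cohen-Macaulay local ring, a finitely generated module is reflexive if and only if it is torsion-free and satisfies Serre's condition $(S_2)$; and (ii) a maximal Cohen-Macaulay module over a Cohen-Macaulay ring automatically satisfies both of these conditions. Since reflexivity is a local property, I would work locally and assume $R$ is a normal Cohen-Macaulay local ring.

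First I would verify that $M$ is torsion-free. For any associated prime $\mathfrak{p} \in \operatorname{Ass}(M)$ one has $\depth_{R_\mathfrak{p}} M_\mathfrak{p} = 0$; but the MCM hypothesis forces $\depth_{R_\mathfrak{p}} M_\mathfrak{p} = \dim R_\mathfrak{p}$ on $\operatorname{Supp}(M)$, so $\mathfrak{p} = (0)$, and torsion-freeness follows.

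Next I would analyze the biduality map $\eta \colon M \to M^{**}$ with kernel $K$ and cokernel $C$. At any prime $\mathfrak{p}$ of height $\le 1$, normality of $R$ makes $R_\mathfrak{p}$ a field or a DVR; in both cases a finitely generated torsion-free module is free, hence reflexive, so $\eta_\mathfrak{p}$ is an isomorphism. Consequently $\operatorname{Supp}(K)$ and $\operatorname{Supp}(C)$ consist only of primes of height $\ge 2$.

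Finally I would conclude by a depth chase. The dual $M^{**} = \Hom_R(M^*, R)$ satisfies $(S_2)$ because the $R$-dual of any finitely generated module into a Cohen-Macaulay ring does; and $M$ being MCM satisfies $(S_n)$ for all $n$. Assuming for contradiction that $K \ne 0$, pick a minimal prime $\mathfrak{p}$ of $\operatorname{Supp}(K)$, so $\depth_{R_\mathfrak{p}} K_\mathfrak{p} = 0$ while $\dim R_\mathfrak{p} \ge 2$. Splitting the four-term sequence $0 \to K \to M \to M^{**} \to C \to 0$ into two short exact sequences and applying the depth lemma to each yields $\depth_{R_\mathfrak{p}} K_\mathfrak{p} \ge 2$, a contradiction; the same argument shows $C = 0$. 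The main obstacle is this concluding depth chase; the first two steps are essentially formal consequences of the hypotheses.
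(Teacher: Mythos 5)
Your argument is correct. Note, however, that the paper does not prove this proposition at all: it is quoted as a known fact with a pointer to \cite{H17a}, Proposition 2.8, so there is no in-text proof to compare against; what you have written is a self-contained version of the standard argument (reflexive $=$ torsion-free $+$ $(S_2)$ over a normal domain, verified by a depth chase), and all the ingredients you invoke are valid: MCM modules localize to MCM modules, duals $\Hom_R(-,R)$ over a Cohen--Macaulay ring are second syzygies and hence satisfy $(S_2)$, and the biduality map is an isomorphism in codimension $\le 1$ by normality. Two small streamlinings are worth recording. First, once $M$ is torsion-free the kernel $K$ of $\eta\colon M \to M^{**}$ vanishes for free (an element killed by every functional is torsion, since $M$ embeds into $M\otimes_R \Frac(R)$), so the depth chase is really only needed to kill the cokernel $C$. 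Second, in that chase you should make the inequalities explicit: at a minimal prime $\mathfrak{p}$ of $\operatorname{Supp}(C)$ one has $\operatorname{ht}\mathfrak{p}\ge 2$, $\depth C_\mathfrak{p}=0$, and the sequence $0\to M_\mathfrak{p}\to M^{**}_\mathfrak{p}\to C_\mathfrak{p}\to 0$ gives $\depth C_\mathfrak{p}\ge \min(\depth M_\mathfrak{p}-1, \depth M^{**}_\mathfrak{p})\ge 1$, a contradiction, using $\depth M_\mathfrak{p}=\dim R_\mathfrak{p}\ge 2$ (MCM) and $(S_2)$ for $M^{**}$; similarly, if you keep your four-term version, the bound $\depth K_\mathfrak{p}\ge 2$ needs the intermediate observation that the image $I$ embeds in the torsion-free module $M^{**}$, whence $\depth I_\mathfrak{p}\ge 1$. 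With those details spelled out, your proof is a complete and standard justification of the cited fact.
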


\begin{prop}[see e.g. \cite{H17a} Proposition 2.9] \label{CM ref 2}
Let $R$ be a normal Cohen-Macaulay domain and $M, N$ (maximal) Cohen-Macaulay $R$-modules.
Then, the $R$-module $\Hom_R(N, M)$ is reflexive.
\end{prop}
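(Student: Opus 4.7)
The plan is to apply the standard criterion that, over a normal Cohen-Macaulay domain, a finitely generated module $L$ is reflexive if and only if it satisfies Serre's condition $(S_2)$; so I will verify $(S_2)$ for $L := \Hom_R(N, M)$. First I would take a finite free presentation $R^a \xrightarrow{\phi} R^b \to N \to 0$ of $N$ and apply the left-exact functor $\Hom_R(-, M)$ to obtain
\[ 0 \to L \to M^b \xrightarrow{\phi^*} M^a. \]
Since $M$ is maximal Cohen-Macaulay over the Cohen-Macaulay domain $R$, the only associated prime of $M$ is $(0)$, so $M^b$ is torsion-free, and hence its submodule $L$ is as well. This already handles the $(S_1)$ part of the condition.

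For the depth condition at primes $\mathfrak{p} \in \Spec R$ of height $k \geq 2$, let $C_\mathfrak{p}$ denote the image of the localized map $\phi^*_\mathfrak{p}$, so that $0 \to L_\mathfrak{p} \to M_\mathfrak{p}^b \to C_\mathfrak{p} \to 0$ is exact with $C_\mathfrak{p} \hookrightarrow M_\mathfrak{p}^a$. The depth lemma gives
\[ \depth L_\mathfrak{p} \geq \min\bigl(\depth M_\mathfrak{p}^b,\ \depth C_\mathfrak{p} + 1\bigr). \]
Since $M$ is maximal Cohen-Macaulay, $\depth M_\mathfrak{p}^b = k \geq 2$. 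Since $C_\mathfrak{p}$ is a torsion-free $R_\mathfrak{p}$-module (being a submodule of $M_\mathfrak{p}^a$), either $C_\mathfrak{p} = 0$---so $L_\mathfrak{p} = M_\mathfrak{p}^b$ has depth $k \geq 2$---or $\mathrm{Ass}(C_\mathfrak{p}) = \{(0)\}$ and the maximal ideal $\mathfrak{p} R_\mathfrak{p}$ supplies a regular element on $C_\mathfrak{p}$, giving $\depth C_\mathfrak{p} \geq 1$. In either case $\depth L_\mathfrak{p} \geq 2$, completing the verification of $(S_2)$.

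The main obstacle I anticipate is the depth computation at height-$\geq 2$ primes: one must correctly apply the depth lemma to a sequence whose middle term is only maximal Cohen-Macaulay (not free) and separately handle the degenerate case $C_\mathfrak{p} = 0$. A slight alternative would be to first invoke Proposition \ref{CM ref 1} to conclude that $M$ itself is reflexive, and then prove the more general statement that the kernel of any map between reflexive modules over a normal domain is reflexive; but the essential calculation is the same.
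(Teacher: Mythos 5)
Your proof is correct: presenting $\Hom_R(N,M)$ as the kernel of $M^b \to M^a$, checking torsion-freeness via $\mathrm{Ass}(M)=\{(0)\}$, and getting $\depth \geq 2$ at height $\geq 2$ primes from the depth lemma is exactly the standard argument, and the criterion ``reflexive $\Leftrightarrow$ $(S_2)$'' over a normal domain applies since you verify torsion-freeness separately. The paper itself gives no proof of this proposition, only the citation to \cite{H17a}, where the argument is the same syzygy-plus-depth-lemma computation you carried out.
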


\begin{proof}[Proof of Proposition \ref{tilt to NCCR}]
Since $\Tilt$ contains $\stsh_Y$ as a direct summand, we have
\[ H^i(Y, \Tilt) = 0 = \Ext_Y^i(\Tilt, \stsh_Y) \]
for all $i \neq 0$.
Thus $\phi_*\Tilt$ is a Cohen-Macaulay $R$-module and hence $\End_R(\phi_*\Tilt)$ is a reflexive $R$-module.
On the other hand, since $H^i(Y, \Tilt^* \otimes \Tilt) = 0$ for $i \neq 0$ and $(\Tilt^* \otimes \Tilt)^* \simeq \Tilt \otimes \Tilt^*$,
the $R$-module $\phi_*(\Tilt^* \otimes \Tilt) = \End_Y(\Tilt)$ is also Cohen-Macaulay and reflexive.
Since $\End_R(\phi_*\Tilt)$ and $\End_Y(\Tilt)$ are isomorphic to each other in codimension one, we have an isomorphism
\[ \End_R(\phi_*\Tilt) \simeq \End_Y(\Tilt). \]
Since there is an equivalence of categories $\D(Y) \simeq \D(\modu \End_R(\phi_*\Tilt))$, the algebra $\End_R(\phi_*\Tilt)$ has finite global dimension.
\end{proof}

\subsection{Toda-Uehara's construction for tilting bundles and perverse hearts}

Van den Bergh showed in \cite{VdB04a, VdB04b} that if $f : Y \to X$ is a projective morphism that has fibers of dimension at most one and satisfies $Rf_*\stsh_Y \simeq \stsh_X$ (e.g. $3$-fold flopping contraction), 
then there is a tilting bundle on $Y$ that is a projective generator of a perverse heart ${}^0\Per(Y/X)$.
As a generalisation of this result, Toda and Uehara provided a method to construct a tilting bundle in higher dimensional cases that satisfy certain assumptions \cite{TU10}.
They also provided a perverse heart ${}^0\Per(Y/A_{n-1})$ that contains the tilting bundle as a projective generator.
In the present subsection, we recall the construction of Toda-Uehara's tilting bundle.

Let $f : Y \to X = \Spec R$ be a projective morphism from a Noetherian scheme $Y$ to an affine scheme $X$ of finite type.
Assume that $Rf_*\stsh_Y \simeq \stsh_X$ and $\dim f^{-1}(x) \leq n$ for all $x \in X$.
In addition, let us assume the following condition holds for $Y$:

\begin{assum} \label{assumption TU1}
There exists an ample and globally generated line bundle $\stsh_Y(1)$ such that 
\[ H^i(Y, \stsh_Y(-j)) = 0 \]
for $i \geq 2$, $0 < j < n$.
\end{assum}

\noindent
\textbf{Step 1.}
Under this setting, partial tilting bundles $\EE_k$ for $0 \leq k \leq n - 1$ are defined inductively as follows.
First, put $\EE_0 := \stsh_Y$.
Assume that $0 < k \leq n - 1$.
Let $r_{k-1}$ be a minimal number of generators of $\Ext_Y^1(\EE_{k-1}, \stsh_Y(-k))$ over $\End_Y(\EE_{k-1})$.
Take a $r_{k-1}$ generators of $\Ext_Y^1(\EE_{k-1}, \stsh_Y(-k))$ and consider an exact sequence corresponding to the generators:
\[ 0 \to \stsh_Y(-k) \to \mathcal{N}_{k-1} \to \EE_{k-1}^{\oplus r_{k-1}} \to 0. \]
Then $\EE_k := \EE_{k-1} \oplus \mathcal{N}_{k-1}$ is a partial tilting bundle \cite[Claim 4.4]{TU10}.
Repeating this construction gives a partial tilting bundle $\EE_{n-1}$ but this is not a generator in general.

\vspace{3mm}

\noindent
\textbf{Step 2.}
Put $A_{n-1} := \End_Y(\EE_{n-1})$ and consider the following functors
\begin{align*}
F &:= \RHom_Y(\EE_{n-1}, -) : \D(Y) \to \D(\modu A_{n-1}), \\
G &:= - \otimes^{\mathrm{L}}_{A_{n-1}} \EE_{n-1} : \D(\modu A_{n-1}) \to \D(Y).
\end{align*}
Note that $G$ is the left adjoint functor of $F$.
Let us consider an object $F(\stsh_Y(-n)) := \RHom_Y(\EE_{n-1}, \stsh_{Y}(-n))$.
Let $P$ be a projective $A_{n-1}$-resolution of $F(\stsh_Y(-n))$ and $\sigma_{\geq 1}(P)$ the sigma stupid truncation of $P$.
Then, there is a canonical morphism $\sigma_{\geq 1}(P) \to P$.
Furthermore, there is a morphism
\[ G(\sigma_{\geq 1}(P)) \to G(P) \simeq G(F(\stsh_Y(-n))) \xrightarrow{\mathrm{adj}} \stsh_Y(-n). \]
Put $\mathcal{N}_{n-1} := \mathrm{Cone}(G(\sigma_{\geq 1}(P)) \to \stsh_Y(-n))$
and $\EE_n := \EE_{n-1} \oplus \mathcal{N}_{n-1}$.
This $\EE_n$ is a generator of $\D(Y)$ but it is not possible to conclude that $\EE_n$ is tilting in general \cite[Lemma 4.6]{TU10}.

\vspace{3mm}

\noindent
\textbf{Step 3.}
Under the following assumption, we can conclude that $\EE_n$ is tilting.

\begin{assum} \label{assumption TU2} \rm
For an object $\mathcal{K} \in \mathrm{D}(Y)$, if we have
\[ \RHom_Y\left( \bigoplus_{i=0}^{n-1} \stsh_Y(-i), \mathcal{K} \right) = 0, \]
the equality
\[ \RHom_Y\left( \bigoplus_{i=0}^{n-1} \stsh_Y(-i), \mathcal{H}^k(\mathcal{K}) \right) = 0 \]
holds for all $k$.
\end{assum}

\begin{thm}[\cite{TU10}]
Under Assumption \ref{assumption TU2}, $\EE_n$ is a tilting bundle on $Y$.
\end{thm}

\begin{rem}[\cite{TU10}, Remark 4.7] \label{Toda-Uehara lem} \rm
We can also conclude that the object $\EE_n$ is a tilting bundle if we assume the vanishing
\[ H^{> 1}(Y, \stsh_Y(-n)) = 0 \]
instead of Assumption \ref{assumption TU2}.
In this case, the bundle $\mathcal{N}_{n-1}$ lies on an exact sequence
\[ 0 \to \stsh_Y(-n) \to \mathcal{N}_{n-1} \to \EE_{n-1}^{\oplus r_{n-1}} \to 0, \]
where $r_{n-1}$ is the minimal number of generators of $\Ext^1_Y(\EE_{n-1}, \stsh_Y(-n))$ over $A_{n-1}$.
\end{rem}

\noindent
\textbf{Perverse heart.}
Put $\EE := \EE_n$ and $A := \End_Y(\EE)$.
Using the tilting bundle $\EE$ gives a derived equivalence
\[ \Psi_{\EE} := \RHom_Y(\EE, -) : \D(Y) \xrightarrow{\sim} \D(\modu A). \]
In \cite{TU10}, Toda and Uehara also studied the perverse heart
\[ {}^0\Per(Y/A_{n-1}) \subset \D(Y) \]
that corresponds to $\modu A$ under the equivalence $\Psi_{\EE}$.
The construction of ${}^0\Per(Y/A_{n-1})$ is as follows.
First, let us consider a subcategory of $\mathrm{D}(Y)$
\[ \mathrm{D}^{\dagger}(Y) := \{ \mathcal{K} \in \mathrm{D}(Y) \mid F(\mathcal{K}) \in \D(\modu A_{n-1}) \} \]
and set
\begin{align*}
\mathcal{C} &:= \{ \mathcal{K} \in \mathrm{D}(Y) \mid F(\mathcal{K}) = 0 \}, \\
\mathcal{C}^{\leq 0} &:= \mathcal{C} \cap \mathrm{D}(Y)^{\leq 0}, \\
\mathcal{C}^{\geq 0} &:= \mathcal{C} \cap \mathrm{D}(Y)^{\geq 0}.
\end{align*}
By definition, there is an inclusion $i : \mathcal{C} \hookrightarrow \mathrm{D}^{\dagger}(Y)$.
The advantage to consider the subcategory $\mathrm{D}^{\dagger}(Y)$ is that it gives the left and right adjoint of $i$:
\begin{align*}
i^* : \mathrm{D}^{\dagger}(Y) \to \mathcal{C}, ~~ i^! : \mathrm{D}^{\dagger}(Y) \to \mathcal{C}.
\end{align*}
Using these functors, the perverse heart ${}^0\Per(Y/A_{n-1})$ is defined as
\[ {}^0\Per(Y/A_{n-1}) := \{ \mathcal{K} \in \mathrm{D}^{\dagger}(Y) \mid F(\mathcal{K}) \in \modu A_{n-1}, i^*\mathcal{K} \in \mathcal{C}^{\leq 0},  i^!\mathcal{K} \in \mathcal{C}^{\geq 0} \}. \]

\begin{thm}[\cite{TU10} Theorem 5.1]
Under the Assumption \ref{assumption TU2},
the abelian category ${}^0\Per(Y/A_{n-1})$ is the heart of a bounded t-structure on $\D(Y)$, 
and $\Psi({}^0\Per(Y/A_{n-1})) = \modu A$.
In particular, $\EE$ is a projective generator of ${}^0\Per(Y/A_{n-1})$.
\end{thm}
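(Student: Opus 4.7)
The plan is to use the tilting bundle $\EE$ constructed in Step 2--3 to transport the standard t-structure on $\D(\modu A)$ back to $\D(Y)$ via the equivalence $\Psi_\EE$, and then to identify the resulting heart with ${}^0\Per(Y/A_{n-1})$. Since $\Psi_\EE(\EE) = A$ and $A$ is a projective generator of $\modu A$, the final assertion that $\EE$ is a projective generator of ${}^0\Per(Y/A_{n-1})$ will follow formally once the heart identification is established, and boundedness of the t-structure on $\D(Y)$ is inherited from that on $\D(\modu A)$.

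For the identification $\Psi_\EE^{-1}(\modu A) = {}^0\Per(Y/A_{n-1})$, I would exploit the decomposition $\EE = \EE_{n-1} \oplus \mathcal{N}_{n-1}$ from Step 2. An object $\mathcal{K} \in \D(Y)$ satisfies $\Psi_\EE(\mathcal{K}) \in \modu A$ if and only if both $F(\mathcal{K}) = \RHom_Y(\EE_{n-1},\mathcal{K})$ and $\RHom_Y(\mathcal{N}_{n-1},\mathcal{K})$ are concentrated in degree $0$. The first vanishing gives exactly the condition $F(\mathcal{K}) \in \modu A_{n-1}$ appearing in the definition of ${}^0\Per(Y/A_{n-1})$ (and in particular places $\mathcal{K}$ in $\mathrm{D}^\dagger(Y)$). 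For the second, I would apply $\RHom_Y(-,\mathcal{K})$ to the defining triangle
\[ G(\sigma_{\geq 1}P) \to \stsh_Y(-n) \to \mathcal{N}_{n-1} \]
and use the adjunction $\RHom_Y(G(-),\mathcal{K}) \simeq \RHom_{A_{n-1}}(-,F(\mathcal{K}))$. Since $\sigma_{\geq 1}P$ is a complex of projective $A_{n-1}$-modules concentrated in strictly positive degrees, the resulting cohomology-level computation should translate precisely into the truncation conditions $i^*\mathcal{K} \in \mathcal{C}^{\leq 0}$ and $i^!\mathcal{K} \in \mathcal{C}^{\geq 0}$; in short, the summand $\mathcal{N}_{n-1}$ of $\EE$ is engineered exactly to detect the $\mathcal{C}$-part of $\mathcal{K}$, while $\EE_{n-1}$ detects the $\modu A_{n-1}$-part.

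The main obstacle is verifying that the gluing description really defines a bounded t-structure on the whole of $\D(Y)$: one must check existence of the truncation functors, closure of the heart under extensions, and the right behaviour of $i^*, i^!$ with respect to standard cohomology. This is where Assumption \ref{assumption TU2} becomes indispensable. It asserts precisely the detection property that $\RHom_Y(\bigoplus_{i=0}^{n-1}\stsh_Y(-i),-)$ sees cohomology term-by-term, which is what allows an inductive ``peeling off'' argument on the amplitude of $\mathcal{K}$ and identifies $\mathcal{C}^{\leq 0}$, $\mathcal{C}^{\geq 0}$ with concrete subcategories of $\D(Y)$ stable under the t-truncations. Once this technical backbone is installed, both inclusions in $\Psi_\EE^{-1}(\modu A) = {}^0\Per(Y/A_{n-1})$ reduce to the cohomological computation sketched above, and the projective generator statement is automatic from $\Psi_\EE(\EE) = A$.
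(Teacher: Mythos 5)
First, a framing remark: the paper does not prove this statement at all --- it is quoted verbatim from \cite{TU10}, Theorem 5.1 --- so there is no internal proof to compare with, and your sketch has to be judged against Toda--Uehara's argument. Its overall shape does match theirs: transport the standard t-structure on $\D(\modu A)$ through $\Psi_{\EE}$ and identify $\Psi_{\EE}^{-1}(\modu A)$ with ${}^0\Per(Y/A_{n-1})$ using the splitting $\EE = \EE_{n-1} \oplus \mathcal{N}_{n-1}$. One simplification you miss: since $\EE$ is already known to be tilting, $\Psi_{\EE}^{-1}(\modu A)$ is \emph{automatically} the heart of a bounded t-structure with projective generator $\EE$, so your ``main obstacle'' (independently checking that the glued data define a bounded t-structure, existence of truncations, etc.) is not where the work lies; everything reduces to proving the two inclusions between $\Psi_{\EE}^{-1}(\modu A)$ and ${}^0\Per(Y/A_{n-1})$.

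The genuine gap is precisely in the step you describe as ``should translate precisely into the truncation conditions''. Applying $\RHom_Y(-,\mathcal{K})$ to the triangle $G(\sigma_{\geq 1}P) \to \stsh_Y(-n) \to \mathcal{N}_{n-1}$ and using the adjunction only rewrites $\RHom_Y(\mathcal{N}_{n-1},\mathcal{K})$ in terms of $\RHom_Y(\stsh_Y(-n),\mathcal{K})$ and $\RHom_{A_{n-1}}(\sigma_{\geq 1}P, F(\mathcal{K}))$; the perverse conditions, however, are stated in terms of $i^*\mathcal{K}$ and $i^!\mathcal{K}$, so one must also bring in the adjunction triangles relating $\mathcal{K}$ to $GF(\mathcal{K})$ and $i\,i^*\mathcal{K}$ (and the analogous one for $i^!$), together with two facts your sketch never isolates: (a) on $\mathcal{C}$ one has $\RHom_Y(\mathcal{N}_{n-1},-) \simeq \RHom_Y(\stsh_Y(-n),-)$, because $F$ kills $\mathcal{C}$ and $\RHom_{A_{n-1}}(\sigma_{\geq 1}P, 0)=0$; and (b) this functor detects the standard t-structure on $\mathcal{C}$, i.e.\ it is concentrated in degree $0$ on coherent sheaves lying in $\mathcal{C}$ and is nonzero on every nonzero such sheaf. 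Point (b) is not a consequence of Assumption \ref{assumption TU2}, to which you attribute all the technical weight: it needs the geometric input that $\stsh_Y(1)$ is ample and globally generated with fibres of dimension $\leq n$, so that $Y$ embeds into $\PP^n_R$ and any object killed by $\stsh_Y, \dots, \stsh_Y(-(n-1))$ is forced into the piece of the semiorthogonal decomposition generated by $\stsh_{\PP^n}(-n)$ --- exactly the mechanism the paper itself uses in Section \ref{subsec: TU assump}. Assumption \ref{assumption TU2} enters only to guarantee that $\mathcal{C}$ is closed under taking cohomology sheaves, so that the induction on amplitude can run and the conditions $i^*\mathcal{K} \in \mathcal{C}^{\leq 0}$, $i^!\mathcal{K} \in \mathcal{C}^{\geq 0}$ can be checked cohomology by cohomology. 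Without (a), (b) and the recollement-type triangles, the asserted equivalence between ``$\RHom_Y(\mathcal{N}_{n-1},\mathcal{K})$ is concentrated in degree $0$'' and ``$i^*\mathcal{K} \in \mathcal{C}^{\leq 0}$ and $i^!\mathcal{K} \in \mathcal{C}^{\geq 0}$'' remains an assertion rather than a proof.
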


\subsection{Iyama-Wemyss mutation}

In the present subsection, we recall some basic definitions and properties about Iyama-Wemyss mutation.
Iyama-Wemyss's mutation provides a basic tool to compare two different NCCRs.

\begin{defi} \rm
Let $R$ be a normal Gorenstein algebra.
A reflexive $R$-module $M$ is say to be a \textit{modifying module} if $\End_{R}(M)$ is a (maximal) Cohen-Macaulay $R$-module.
\end{defi}

\begin{defi} \label{def approx} \rm
Let $A$ be a ring, $M, N$ $A$-modules, and $N_0 \in \add N$.
A morphism $f : N_0 \to M$ is called a \textit{right $(\add N)$-approximation} if the map
\[ \Hom_A(N, N_0) \xrightarrow{f \circ} \Hom_A(N,M) \]
is surjective.
\end{defi}

Let $R$ be a normal Gorenstein algebra and $M$ a modifying $R$-module.
For any $0 \neq N \in \add M$, consider 
\begin{enumerate}
\item[(1)] a right $(\add N)$-approximation of $M$, $a : N_0 \to M$, and
\item[(2)] a right $(\add N^*)$-approximation of $M^*$, $b : N_1^* \to M^*$.
\end{enumerate}
Put $K_0 := \Ker(a)$ and $K_1 := \Ker(b)$.

\begin{defi} \rm
With notations as above, we define the \textit{right mutation} of $M$ at $N$ to be $\mu_N^R(M) := N \oplus K_0$
and the \textit{left mutation} of $M$ at $N$ to be $\mu_N^L(M) := N \oplus K_1^*$.
\end{defi}

Note that, the right and left mutations are well-defined up to additive closure \cite[Lemma 6.3]{IW14}.
In \cite{IW14}, Iyama and Wemyss proved the following theorem.
 
\begin{thm}[\cite{IW14}] \label{IW mutation equiv}
Let $R$ be a normal Gorenstein algebra
and $M$ a modifying module.
Assume that $0 \neq N \in \add M$.
Then
\begin{enumerate}
\item[(1)] $R$-algebras $\End_R(M)$, $\End_R(\mu_N^R(M))$, and $\End_R(\mu^L_N(M))$ are derved equivalent.
\item[(2)] If $M$ gives an NCCR of $R$, so do its mutations $\mu_N^R(M)$ and $\mu^L_N(M)$.
\end{enumerate}
\end{thm}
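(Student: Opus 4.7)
The plan is to construct, for each mutation, an explicit tilting complex of projective $\Lambda$-modules (where $\Lambda := \End_R(M)$) whose derived endomorphism ring is isomorphic to the endomorphism ring of the mutated module, and then to deduce the NCCR statement in (2) from depth-theoretic properties preserved by the construction.

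Write $M = N \oplus L$, and let $e \in \Lambda$ be the idempotent with $\Lambda e \simeq \Hom_R(M, N)$. A right $(\add N)$-approximation $a : N_0 \to L$ determines (after arranging surjectivity, which the modifying and $d$-sCY hypotheses permit) a short exact sequence $0 \to K_0 \to N_0 \to L \to 0$ with $K_0$ reflexive, so that $\mu_N^R(M) = N \oplus K_0$ is well-defined. Applying $\Hom_R(M, -)$ yields a two-term complex of projective $\Lambda$-modules
\[
C := \bigl[\Hom_R(M, N_0) \xrightarrow{a_*} \Hom_R(M, L)\bigr]
\]
concentrated in degrees $-1$ and $0$, and I set $T := \Lambda e \oplus C$.

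The heart of the argument is to show $T$ is a tilting complex. Generation follows because $\Lambda e$ together with the degree-$0$ term of $C$ exhausts a complete set of indecomposable projectives of $\Lambda$. The Ext vanishing $\Hom_{\D(\Lambda)}(T, T[i]) = 0$ for $i \neq 0$ reduces, after resolving, to vanishing of certain $\Ext^i_R$-groups between $N$, $N_0$, $L$, and $K_0$, which is controlled by combining the approximation property (surjectivity of $\Hom_R(N, a)$) with the $d$-sCY Serre-type duality $\Ext^i_R(X, Y) \simeq D\,\Ext^{d-i}_R(Y, X)$ between maximal Cohen-Macaulay modules. A direct computation using the cone exact sequence for $C$ together with the reflexivity of $K_0$ then identifies
\[
\End_{\D(\Lambda)}(T)^{\mathrm{op}} \simeq \End_R(N \oplus K_0) = \End_R(\mu_N^R(M)),
\]
and Rickard's theorem yields the desired derived equivalence. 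The left-mutation statement follows by applying the same argument to $M^*$ with a right $(\add N^*)$-approximation, then $R$-dualizing.

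For part (2), assume $\Lambda$ is an NCCR, so $\gldim \Lambda < \infty$ and $\Lambda$ is maximal Cohen-Macaulay. A derived equivalence between Noetherian $R$-algebras that are module-finite over $R$ preserves the property of finite global dimension, so $\End_R(\mu_N^R(M))$ and $\End_R(\mu_N^L(M))$ have finite global dimension. For the Cohen-Macaulay property, it suffices to show $\mu_N^R(M)$ is modifying: applying the depth lemma to $0 \to K_0 \to N_0 \to L \to 0$ and its $\Hom$-dualization, and using that $\End_R(M)$ and $\End_R(N)$ are maximal Cohen-Macaulay, one concludes that $\End_R(\mu_N^R(M))$ has depth $\geq d$ at every maximal ideal. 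The main obstacle will be the verification that $T$ is tilting: the Ext vanishing is delicate and uses both the approximation property and the $d$-sCY duality in an essential way, while identifying the endomorphism ring requires tracking reflexivity through the cone construction — this interplay is the technical core of Iyama-Wemyss's theory and has no lighter substitute.
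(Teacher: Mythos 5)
The paper does not reprove this theorem: it is quoted from \cite{IW14}, and the discussion immediately after the statement records the shape of Iyama--Wemyss's argument, namely that the equivalence is realised by a tilting \emph{module} $Q\oplus C$ over $\Lambda=\End_R(M)$, where $Q=\Hom_R(M,N)$ and $C$ is the \emph{image} of $\Hom_R(M,N_1)\to\Hom_R(M,K_1^*)$. Your substitute for this, $T=\Lambda e\oplus C$ with $C=[\Hom_R(M,N_0)\xrightarrow{a_*}\Hom_R(M,L)]$ placed in degrees $-1,0$, is not a tilting complex. Since $\Lambda e$ is projective and $H^{-1}(C)=\Ker(a_*)=\Hom_R(M,K_0)$, one gets $\Hom_{\D(\modu\Lambda)}(\Lambda e,C[-1])\cong\Hom_{\Lambda}(\Hom_R(M,N),\Hom_R(M,K_0))\cong\Hom_R(N,K_0)$ by projectivization, and this is nonzero whenever $K_0\neq 0$, because two nonzero torsion-free modules over the normal domain $R$ always admit a nonzero homomorphism (localise at the generic point). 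So $\Hom(T,T[-1])\neq 0$ and Rickard's theorem never applies to a nontrivial mutation. Concretely, for $R=\CC[[u,v,x,y]]/(uv-xy)$, $M=R\oplus(u,x)$, $N=R$, your $T$ corresponds under $\RHom_Y(\stsh_Y\oplus\stsh_Y(1),-)$ to $\stsh_Y\oplus\stsh_Y(-1)[1]$ on the resolved conifold, which has $\Hom_Y(\stsh_Y,\stsh_Y(-1))\neq 0$ sitting in degree $-1$. Shifting the complex into degrees $0,1$ would repair the negative Ext (the approximation property then kills $\Hom(T,T[1])$), but at that point the real content --- the isomorphism $\End_{\D(\Lambda)}(T)\cong\End_R(N\oplus K_0)$ --- is exactly what must be proved and is not ``a direct computation'': in general one only obtains images such as $\Ima(\Hom_R(N_0,N)\to\Hom_R(K_0,N))$, and identifying these with the full Hom-spaces of $\End_R(\mu_N(M))$ is where the $d$-sCY duality and reflexivity are genuinely used; this is precisely why \cite{IW14} work with the image module $C$ rather than a cone.

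Two further gaps. First, a right $(\add N)$-approximation $a\colon N_0\to L$ need not be surjective; it is only generically surjective, and neither the modifying nor the $d$-sCY hypothesis lets you ``arrange'' surjectivity (if $N$ has no free summand, the trace of $N$ in $L$ can be a proper submodule). Iyama--Wemyss define $\mu_N^R(M)$ as the kernel of a possibly non-surjective map and organise their proofs accordingly, so your short exact sequence $0\to K_0\to N_0\to L\to 0$ is not available in general, and the depth-lemma argument you sketch for (2) leans on it. Second, in (2) the preservation of finite global dimension under derived equivalence is fine, but Cohen--Macaulayness of $\End_R(\mu_N(M))$ is obtained in \cite{IW14} through the identification with $\End_{\Lambda}(Q\oplus C)$ for the reflexive tilting module $Q\oplus C$, not by a direct depth count on the exchange sequence. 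If you want a proof in the spirit of your outline, prove instead what the paper records after the statement: that $Q\oplus C$ (image, not cone) is a tilting $\Lambda$-module with $\End_{\Lambda}(Q\oplus C)\cong\End_R(\mu_N^L(M))$.
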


The equivalence between $\End_R(M)$ and $\End_R(\mu^L_N(M))$ is given as follows.
Let $Q := \Hom_R(M, N)$ and 
\[ C := \Ima\left(\Hom_R(M, N_1) \to \Hom_R(M, K_1^*)\right). \]
Then, one can show that $V \oplus Q$ is a tilting $\Lambda := \End_R(M)$-module and there is an isomorphism of $R$-algebras
\[ \End_R(\mu_N^L(M)) \simeq \End_{\Lambda}(C \oplus Q). \]
Thus, there is an equivalence
\[ \IW_N := \RHom(C \oplus Q, -) : \D(\modu(\End_R(M))) \to \D(\modu(\End_R(\mu_N^L(M)))). \]
In this paper, we only use left IW mutations and hence we call them simply \textit{IW mutations} and write $\mu_N(M)$ instead of $\mu^L_N(M)$.
We also call the functor $\IW_N$ an \textit{IW mutation functor}.

The following lemmas are useful to find an approximation.

\begin{lem}[\cite{IW14}, Lemma 6.4, (3)] \label{lem approx}
Let us consider a right exact sequence
\[ 0 \to K \xrightarrow{b} N_0 \xrightarrow{a} M, \]
where $a$ is a right $(\add N)$-approximation of $M$.
Then, the dual of the above sequence
\[ 0 \to M^* \xrightarrow{a^*} N_0^* \xrightarrow{b^*} K^* \]
is also right exact and $b^*$ is a right $(\add N^*)$-approximation of $K^*$.
\end{lem}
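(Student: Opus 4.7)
The plan is to separate the statement into two independent claims and handle each in turn. First I would establish exactness of $0 \to M^* \xrightarrow{a^*} N_0^* \xrightarrow{b^*} K^*$ at $M^*$ and $N_0^*$; this is the immediate consequence of applying the left-exact functor $\Hom_R(-, R)$ to the original exact sequence $0 \to K \xrightarrow{b} N_0 \xrightarrow{a} M \to 0$ (interpreting the ``right exact'' hypothesis as a short exact sequence, so that $a$ is surjective and $b$ is injective with $\Ker a = \Ima b$).

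Next, for the approximation property of $b^*$, my plan is to use reflexivity to translate the problem into an extension question. Since $N$ is reflexive as a modifying $R$-module over the normal $d$-sCY ring $R$, for every $N' \in \add N$ the canonical adjunction
\[ \Hom_R\bigl((N')^*, L^*\bigr) \cong \Hom_R\bigl(L \otimes_R (N')^*, R\bigr) \cong \Hom_R\bigl(L, (N')^{**}\bigr) = \Hom_R(L, N'), \]
applied to $L = K$ and $L = N_0$, identifies the map $b^* \circ -$ with the precomposition map $\Hom_R(N_0, N') \to \Hom_R(K, N')$. Thus the desired surjectivity of $\Hom_R((N')^*, N_0^*) \to \Hom_R((N')^*, K^*)$ is equivalent to the statement that every morphism $K \to N'$ extends along $b$ to a morphism $N_0 \to N'$, for each $N' \in \add N$.

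The long exact sequence obtained by applying $\Hom_R(-, N')$ to $0 \to K \to N_0 \to M \to 0$ reduces the extension property to the vanishing of the connecting homomorphism $\Hom_R(K, N') \to \Ext^1_R(M, N')$. I would deduce this vanishing from the approximation property of $a$, which dually (via the long exact sequence of $\Hom_R(N', -)$) says that the connecting map $\Hom_R(N', M) \to \Ext^1_R(N', K)$ vanishes, by invoking the Auslander-Reiten / Serre-type symmetry of $\Ext$-groups available over the $d$-sCY ring $R$.

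The main obstacle lies in this last step: making precise how the approximation property of $a$, which is a surjectivity statement for $\Hom_R(N', -)$, yields the vanishing for the $\Hom_R(-, N')$ connecting map in the other direction. The resolution should exploit the Gorenstein / Calabi-Yau duality inherent in the $d$-sCY setting to swap the two $\Hom$ directions; alternatively, one could try a direct syzygy construction that uses the approximation data explicitly to lift a given morphism $K \to N'$ to $N_0 \to N'$ by factoring through an appropriate object of $\add N$, bypassing the $\Ext$-vanishing formulation entirely.
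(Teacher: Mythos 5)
The paper gives no proof of this lemma at all (it is quoted verbatim from [IW14, Lemma~6.4(3)]), so your proposal has to stand on its own, and as it stands it has a genuine gap at its center. Your second step is fine: the adjunction $\Hom_R((N')^*,L^*)\cong\Hom_R(L,(N')^{**})=\Hom_R(L,N')$ is natural in $L$, so surjectivity of $\Hom_R((N')^*,N_0^*)\to\Hom_R((N')^*,K^*)$ is indeed equivalent to the statement that every $g\colon K\to N'$ with $N'\in\add N$ extends along $b$ to a map $N_0\to N'$. But that extension property is the whole content of the lemma, and you leave it unproved. The Calabi--Yau duality you invoke does not close it: the identification of $\Ext^i_R(X,Y)$ with the vector-space dual of $\Ext^{d-i}_R(Y,X)$ that the $d$-sCY condition provides (following Iyama--Reiten) requires finite-length hypotheses (or isolated singularities and maximal Cohen--Macaulay modules), none of which hold for the merely reflexive modules $K,M,N'$ over the possibly non-isolated singularities allowed here; and even granting such a duality formally, the connecting map you need to kill, $\Hom_R(K,N')\to\Ext^1_R(M,N')$, would correspond to the degree-$(d-1)$ connecting map $\Ext^{d-1}_R(N',M)\to\Ext^d_R(N',K)$ of the other long exact sequence, whereas the approximation property of $a$ only kills the degree-$0$ connecting map $\Hom_R(N',M)\to\Ext^1_R(N',K)$; for $d\geq 2$ these are different statements. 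Your fallback, a ``direct syzygy construction'' lifting $K\to N'$ by factoring through $\add N$, is a restatement of the goal, not an argument. It is telling that the modifying hypothesis ($\End_R(M)$ Cohen--Macaulay) never enters your proof: it is exactly this hypothesis, together with reflexivity, codimension-one splitting and depth arguments on the Hom-modules (which are CM, resp.\ $(S_2)$), that [IW14] leverage to force the required surjectivity.

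A secondary issue is your reading of the hypothesis. In the cited lemma $a$ is not assumed surjective --- the displayed sequence deliberately stops at $M$ --- and exactness of $0\to M^*\to N_0^*\to K^*$ at $N_0^*$ is part of the assertion: one must show $\Cok(a)$ is supported in codimension $\geq 2$ (using that $N$ has positive rank, so the approximation is surjective at height-one primes) and then extend functionals from $\Ima(a)$ to $M$ by an $(S_2)$/depth argument. Interpreting the hypothesis as a short exact sequence trivializes this half but proves a weaker statement. For the approximation half the reduction to the surjective case is harmless, since $N_0\to\Ima(a)$ is again a right $(\add N)$-approximation with the same kernel, but you neither note this reduction nor prove the exactness claim in the stated generality; in this paper's applications one always has $R\in\add N$, so there surjectivity of $a$ is automatic, but the lemma as cited is more general.
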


\begin{lem} \label{lem approx2}
Let $\phi : Y \to X = \Spec R$ be a crepant resolution of an affine Gorenstein normal variety $X$.
Let $\mathcal{W}$ be a vector bundle on $Y$ and
\[ 0 \to \mathcal{K} \to \EE \to \mathcal{C} \to 0 \]
an exact sequence of vector bundles on $Y$.
Assume that 
\begin{enumerate}
\item[(a)] $\EE \in \add(\mathcal{W})$,
\item[(b)] $\mathcal{W} \oplus \mathcal{K}$ and $\mathcal{W} \oplus \mathcal{C}$ are tilting bundles, and
\item[(c)] $\mathcal{W}$ contains $\stsh_Y$ as a direct summand.
\end{enumerate}
Then, 
\begin{enumerate}
\item[(1)] The sequence
\[ 0 \to f_*\mathcal{K} \to f_*\EE \to f_*\mathcal{C} \to 0 \]
is exact and provides a right $(\add f_*\mathcal{W})$-approximation of $f_*\mathcal{C}$.
\item[(2)] The IW mutation functor
\[ \Phi_{f_*\mathcal{W}} : \D(\modu \End_Y(\mathcal{W} \oplus \mathcal{K})) \xrightarrow{\sim} \D( \modu \End_Y(\mathcal{W} \oplus \mathcal{C})) \]
coincides with the functor $\RHom(\RHom_Y(\mathcal{W} \oplus \mathcal{K}, \mathcal{W} \oplus \mathcal{C}), -)$.
\end{enumerate}
\end{lem}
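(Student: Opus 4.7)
The plan is to prove (1) by applying $\phi_*$ to the given sequence on $Y$, then prove (2) by first dualizing and pushing down to identify $\mu_N(M)$, and finally by reading off the bimodule $C \oplus Q$ from the resulting sequence of $R$-modules. The common engine is that hypotheses (b) and (c) force the vanishing of all higher direct images I will need, so the functor $\phi_*$ behaves as if it were exact on all the relevant sheaves.

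For (1), since $\stsh_Y \in \add(\mathcal{W})$ and $\mathcal{W} \oplus \mathcal{K}$ is tilting, one has $H^i(Y, \mathcal{K}) = \Ext_Y^i(\stsh_Y, \mathcal{K}) = 0$ for $i > 0$. Because $X$ is affine, this gives $R^i\phi_*\mathcal{K} = 0$ for $i > 0$, so $\phi_*$ applied to the given sequence remains short exact. Membership $\phi_*\EE \in \add(\phi_*\mathcal{W})$ is immediate from (a). For the approximation property, Proposition \ref{tilt to NCCR} applied to the tilting bundles $\mathcal{W} \oplus \mathcal{K}$ and $\mathcal{W} \oplus \mathcal{C}$ identifies $\Hom_R(\phi_*\mathcal{W}, \phi_*\mathcal{F}) = \Hom_Y(\mathcal{W}, \mathcal{F})$ for $\mathcal{F} \in \{\EE, \mathcal{C}\}$; applying $\Hom_Y(\mathcal{W}, -)$ to the sequence on $Y$ and invoking $\Ext^1_Y(\mathcal{W}, \mathcal{K}) = 0$ (from (b)) then yields the required surjectivity.

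For (2), set $M := \phi_*(\mathcal{W} \oplus \mathcal{K})$, $N := \phi_*\mathcal{W}$, and $\Lambda := \End_R(M)$. To identify $\mu_N(M)$, I dualize the given sequence on $Y$ to obtain a short exact sequence of vector bundles $0 \to \mathcal{C}^* \to \EE^* \to \mathcal{K}^* \to 0$. Pushing this down stays short exact, since $\Ext^i_Y(\mathcal{C}, \stsh_Y) = 0$ for $i > 0$ (by tilting of $\mathcal{W} \oplus \mathcal{C}$ and (c)). Grothendieck duality applied to the crepant morphism $\phi$, which gives $\phi^!\stsh_X = \stsh_Y$, combined with the Cohen--Macaulayness of the relevant pushforwards, produces the canonical isomorphism $\phi_*(\mathcal{F}^*) \simeq (\phi_*\mathcal{F})^*$ for $\mathcal{F} \in \{\EE, \mathcal{K}, \mathcal{C}\}$. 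The dualized pushed-forward sequence then reads $0 \to (\phi_*\mathcal{C})^* \to (\phi_*\EE)^* \to (\phi_*\mathcal{K})^* \to 0$, which by (1) and Lemma \ref{lem approx} is a right $(\add N^*)$-approximation of $(\phi_*\mathcal{K})^*$. Extending it trivially by $\id_{N^*}$ gives an approximation of $M^* = N^* \oplus (\phi_*\mathcal{K})^*$ with kernel $K_1 = (\phi_*\mathcal{C})^*$, and reflexivity of $\phi_*\mathcal{C}$ (Cohen--Macaulay over the normal Gorenstein ring $R$, by Proposition \ref{CM ref 1}) yields $\mu_N(M) = N \oplus K_1^* = \phi_*(\mathcal{W} \oplus \mathcal{C})$.

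Finally, dualizing the approximation of $M^*$ produces the short exact sequence of $R$-modules $0 \to M \to N_1 \to K_1^* \to 0$, which under the above identifications is just $\phi_*$ of $0 \to \mathcal{W} \oplus \mathcal{K} \to \mathcal{W} \oplus \EE \to \mathcal{C} \to 0$. Applying $\Hom_R(M, -)$ — which equals $\Hom_Y(\mathcal{W} \oplus \mathcal{K}, -)$ on pushforwards via Proposition \ref{tilt to NCCR} — and invoking $\Ext^1_Y(\mathcal{W} \oplus \mathcal{K}, \mathcal{W} \oplus \mathcal{K}) = 0$ yields $C = \Hom_Y(\mathcal{W} \oplus \mathcal{K}, \mathcal{C})$. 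Combining with $Q = \Hom_Y(\mathcal{W} \oplus \mathcal{K}, \mathcal{W})$ then gives $C \oplus Q = \RHom_Y(\mathcal{W} \oplus \mathcal{K}, \mathcal{W} \oplus \mathcal{C})$ (concentrated in degree $0$ by the tilting hypothesis), so the IW mutation functor $\Phi_N = \RHom_\Lambda(C \oplus Q, -)$ takes the stated form. I expect the main technical obstacle to be the careful verification of the duality--pushforward interchange $\phi_*(\mathcal{F}^*) \simeq (\phi_*\mathcal{F})^*$ for each of $\mathcal{F} = \EE, \mathcal{K}, \mathcal{C}$, which ultimately rests on crepancy of $\phi$ together with the Cohen--Macaulay consequences of the tilting hypotheses.
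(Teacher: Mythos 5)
Your part (1), and your identification of the mutated module, essentially reproduce the paper's argument: exactness of the pushed-forward sequence from the vanishing $H^{>0}(Y,\mathcal{K})=0$ forced by (b) and (c), the $\Hom$-identifications via Proposition \ref{tilt to NCCR}, and Lemma \ref{lem approx} together with reflexivity of $\phi_*\mathcal{C}$ to get $\mu_{\phi_*\mathcal{W}}(\phi_*\mathcal{W}\oplus\phi_*\mathcal{K})=\phi_*\mathcal{W}\oplus\phi_*\mathcal{C}$. One remark: the Grothendieck-duality interchange $\phi_*(\mathcal{F}^*)\simeq(\phi_*\mathcal{F})^*$, which you single out as the main technical obstacle, is not actually needed. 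Lemma \ref{lem approx} is a purely module-theoretic statement, so you may dualize the already pushed-forward sequence $0\to\phi_*\mathcal{K}\to\phi_*\EE\to\phi_*\mathcal{C}$ directly; there is no need to realize the dual as the pushforward of the dualized sheaf sequence.

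The genuine gap is in your final step, the identification $C\simeq\Hom_Y(\mathcal{W}\oplus\mathcal{K},\mathcal{C})$. You justify the equality $\Hom_R(M,\phi_*\mathcal{C})\simeq\Hom_Y(\mathcal{W}\oplus\mathcal{K},\mathcal{C})$ by appealing to Proposition \ref{tilt to NCCR}, but that argument only applies when the two bundles are direct summands of a single (partial) tilting bundle containing $\stsh_Y$: one needs both $\Ext^{>0}_Y(\mathcal{W}\oplus\mathcal{K},\mathcal{C})=0$ and $\Ext^{>0}_Y(\mathcal{C},\mathcal{W}\oplus\mathcal{K})=0$ to conclude that $\phi_*\lhom(\mathcal{W}\oplus\mathcal{K},\mathcal{C})$ is Cohen--Macaulay, hence reflexive, and the second vanishing fails whenever the given sequence is non-split, since its extension class lives in $\Ext^1_Y(\mathcal{C},\mathcal{K})$ (this is the situation in every application in the paper). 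So the isomorphism you assert is unjustified, and a priori $\Hom_R(M,\phi_*\mathcal{C})$ could be strictly larger, in which case your computation would only exhibit $C$ as an image that you have not yet identified with $\Hom_Y(\mathcal{W}\oplus\mathcal{K},\mathcal{C})$. What is needed is weaker and is exactly the point the paper's proof supplies: the natural map $\Hom_Y(\mathcal{W}\oplus\mathcal{K},\mathcal{C})\to\Hom_R(\phi_*(\mathcal{W}\oplus\mathcal{K}),\phi_*\mathcal{C})$ is injective, because the source is a torsion-free $R$-module and the map is an isomorphism in codimension one. Combining this injectivity with the surjectivity of $\Hom_Y(\mathcal{W}\oplus\mathcal{K},\EE)\to\Hom_Y(\mathcal{W}\oplus\mathcal{K},\mathcal{C})$ (from $\Ext^1_Y(\mathcal{W}\oplus\mathcal{K},\mathcal{K})=0$) and with the valid identification $\Hom_R(M,\phi_*\EE)\simeq\Hom_Y(\mathcal{W}\oplus\mathcal{K},\EE)$ shows that the image $C$ is exactly $\Hom_Y(\mathcal{W}\oplus\mathcal{K},\mathcal{C})$ and not a proper quotient of it. With your citation replaced by this injectivity argument, the rest of your proof of (2) goes through and coincides with the paper's.
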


\begin{proof}
First, note that there are isomorphisms of $R$-algebras
\begin{align*}
\End_Y(\mathcal{W} \oplus \mathcal{K}) \simeq \End_R(f_*\mathcal{W} \oplus f_*\mathcal{K}),~
\End_Y(\mathcal{W} \oplus \mathcal{C}) \simeq \End_R(f_*\mathcal{W} \oplus f_*\mathcal{C})
\end{align*}
by Proposition \ref{tilt to NCCR}.
The assumptions (b) and (c) imply $H^1(Y, \mathcal{K}) = 0$, and thus the sequence
\[ 0 \to f_*\mathcal{K} \to f_*\EE \to f_*\mathcal{C} \to 0 \]
is exact.
Moreover, as in the proof of Proposition \ref{tilt to NCCR}, we have
\begin{align*}
\Hom_Y(\mathcal{W}, \EE) \simeq \Hom_R(f_*\mathcal{W}, f_*\EE), ~
\Hom_Y(\mathcal{W}, \mathcal{C}) \simeq \Hom_R(f_*\mathcal{W}, f_*\mathcal{C}).
\end{align*}
Since $\Ext_Y^1(\mathcal{W}, \mathcal{K}) = 0$, the map
\[ \Hom_R(f_*\mathcal{W}, f_*\EE) \to \Hom_R(f_*\mathcal{W}, f_*\mathcal{C}) \]
is surjective.
This shows (1).

Let $V := \Hom_R(f_*\mathcal{W} \oplus f_*\mathcal{K}, f_*\mathcal{W})$ and 
\[ Q := \Ima(\Hom_R(f_*\mathcal{W} \oplus f_*\mathcal{K}, f_*\EE) \to \Hom_R(f_*\mathcal{W} \oplus f_*\mathcal{K}, f_*\mathcal{C})). \]
Then, the IW mutation functor is defined as
\[ \Phi_{f_*\mathcal{W}} := \RHom(V \oplus Q, - ). \]
First, as in the proof of Proposition \ref{tilt to NCCR}, we have
\[ V = \Hom_R(f_*\mathcal{W} \oplus f_*\mathcal{K}, f_*\mathcal{W}) \simeq \Hom_Y(\mathcal{W} \oplus \mathcal{K}, \mathcal{W}) \]
and
\[ \Hom_R(f_*\mathcal{W} \oplus f_*\mathcal{K}, f_*\EE) \simeq \Hom_Y(\mathcal{W} \oplus \mathcal{K}, \EE). \]
Since the $R$-module $\Hom_Y(\mathcal{W} \oplus \mathcal{K}, \mathcal{C})$ is torsion free and isomorphic to $\Hom_R(f_*\mathcal{W} \oplus f_*\mathcal{K}, f_*\mathcal{C})$ in codimension one, the natural map
\[ \Hom_Y(\mathcal{W} \oplus \mathcal{K}, \mathcal{C}) \to \Hom_R(f_*\mathcal{W} \oplus f_*\mathcal{K}, f_*\mathcal{C}) \]
is injective.
Thus, we have the following diagram
\[ \begin{tikzcd}
\Hom_Y(\mathcal{W} \oplus \mathcal{K}, \EE) \arrow[r, equal] \arrow[d, twoheadrightarrow] & \Hom_R(f_*\mathcal{W} \oplus f_*\mathcal{K}, f_*\EE) \arrow[d] \\
\Hom_Y(\mathcal{W} \oplus \mathcal{K}, \mathcal{C}) \arrow[r, hook] & \Hom_R(f_*\mathcal{W} \oplus f_*\mathcal{K}, f_*\mathcal{C})).
\end{tikzcd} \]
Therefore, $Q = \Hom_Y(\mathcal{W} \oplus \mathcal{K}, \mathcal{C})$
and hence $V \oplus Q \simeq \RHom_Y(\mathcal{W} \oplus \mathcal{K}, \mathcal{W} \oplus \mathcal{C})$.
This shows (2).
\end{proof}

\subsection{Spherical twist} \label{susect: def sph}
In this subsection, we recall the definition of spherical twists.

\begin{defi} \rm
Let $X$ be an $n$-dimensional smooth variety.
\begin{enumerate}
\item[(1)] We say that an object $\EE \in \D(X)$ is a \textit{spherical object} if $\EE \otimes \omega_X \simeq \EE$ and
\[ \RHom_X(\EE, \EE) \simeq \CC \oplus \CC[-n]. \]
\item[(2)] Let $\EE$ be a spherical object.
Then a \textit{spherical twist} $\ST_{\EE}$ around $\EE$ is defined as
\[ \ST_{\EE}(\sh) := \Cone(\RHom_X(\EE, \sh) \otimes_{\CC} \EE \to \sh). \]
\end{enumerate}
\end{defi}

It is well-known that a spherical twist gives an autoequivalence of $\D(X)$ (see \cite{ST01}).

\section{Toda-Uehara's tilting bundles and Segal's tilting bundles} \label{sect: mutation}

\subsection{Notations}
From now on, we fix the following notations.
\begin{enumerate}
\item[$\bullet$] $Y := \Tot(\Sub(-1)) \xrightarrow{\pi} \LGr(V)$.
\item[$\bullet$] $Y' := \Tot((\LL^{\bot}/\LL) \otimes \LL^2) \xrightarrow{\pi'} \PP(V)$.
\item[$\bullet$] $\iota : \LGr \hookrightarrow Y$, $\iota' : \PP \hookrightarrow Y'$: the zero sections.
\item[$\bullet$] $Y^o := Y \setminus \LGr \simeq Y' \setminus \PP \simeq X_{\mathrm{sm}}$.
\item[$\bullet$] $\phi : Y \to X$, $\phi' : Y' \to X$ : two crepant resolutions.
We will regard this $Y^o$ as the common open subset of $Y$, $Y'$ and $X$ in the isomorphisms above.
\item[$\bullet$] $\stsh_Y(1) := \pi^* \stsh_{\LGr(V)}(1)$, $\stsh_{Y'}(1) := \pi'^*\stsh_{\PP(V)}(1)$.
\item[$\bullet$] We write $\Sub$ instead of $\pi^*\Sub$.
\end{enumerate}

\subsection{Toda-Uehara's assumptions for $Y$ and $Y'$} \label{subsec: TU assump}

In the present subsection, we check that Toda-Uehara's assumptions (Assumption \ref{assumption TU1} and Assumption \ref{assumption TU2}) hold for $Y$ and $Y'$.
The first assumption follows from Segal's computation.

\begin{lem}[\cite{Seg16}] \label{lem seg}
We have
\begin{enumerate}
\item[(1)] $H^{\geq 1}(Y, \stsh_Y(j)) = 0$ for $j \geq -2$.
\item[(2)] $H^{> 1}(Y', \stsh_{Y'}(j)) = 0$ for $j \geq -3$.
Further, we have $H^1(Y', \stsh_{Y'}(j)) = 0$ for $j \geq -2$ and $H^1(Y', \stsh_{Y'}(-3)) \simeq \CC$.
\end{enumerate}
In particular, pairs $(Y, \stsh_Y(1))$ and $(Y', \stsh_{Y'}(1))$ satisfy Assumption \ref{assumption TU1}.
\end{lem}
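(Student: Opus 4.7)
The plan is to push down to the bases of the two affine bundles and apply Borel--Bott--Weil on the resulting rational homogeneous spaces. Since $\pi : Y \to \LGr(V)$ and $\pi' : Y' \to \PP(V)$ are affine, higher direct images vanish and the projection formula yields
\begin{align*}
H^i(Y, \stsh_Y(-j)) &\simeq \bigoplus_{k \geq 0} H^i\bigl(\LGr(V),\, \Sym^k \Sub^* \otimes \stsh_{\LGr}(k-j)\bigr), \\
H^i(Y', \stsh_{Y'}(-j)) &\simeq \bigoplus_{k \geq 0} H^i\bigl(\PP(V),\, \Sym^k(\LL^\bot/\LL)^* \otimes \stsh_{\PP}(2k-j)\bigr).
\end{align*}
The symplectic form on $V$ gives the self-dualities $\Sub^* \simeq \Sub(1)$ on $\LGr(V)$ (this follows from the rank-two identity $E^* \simeq E \otimes (\det E)^{-1}$ together with $\det \Sub = \stsh_{\LGr}(-1)$) and $(\LL^\bot/\LL)^* \simeq \LL^\bot/\LL$ on $\PP(V)$ (same identity with $\det (\LL^\bot/\LL) = \stsh_{\PP}$, computed from the filtration $\LL \subset \LL^\bot \subset V \otimes \stsh_{\PP}$). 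Hence each summand becomes an explicit irreducible homogeneous bundle on $\Sp_4/P$ or on $\SL_4/P'$, respectively.

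For part (1), the plan is to feed each summand $\Sym^k \Sub \otimes \stsh_{\LGr}(2k-j)$ into Borel--Bott--Weil on $\LGr(V)$. Each of these bundles is associated with an irreducible $P$-module, so its cohomology is controlled by a single weight; a case-check at small $k$ combined with a uniform positivity argument for large $k$ should show that the $\rho$-shifted weights remain regular and dominant in the stated range, forcing vanishing in degrees $i \geq 1$. For part (2) the same strategy works on $\PP(V) = \PP^3$: most summands contribute only in degree $0$, and I would pin down the exceptional class $H^1(Y', \stsh_{Y'}(-3)) \simeq \CC$ as coming solely from the $k = 1$, $j = 3$ summand. Concretely, the short exact sequence $0 \to \stsh_{\PP}(-1) \to \Omega^1_{\PP}(1) \to \LL^\bot/\LL \to 0$, twisted by $\stsh_{\PP}(-1)$, gives $0 \to \stsh_{\PP}(-2) \to \Omega^1_{\PP} \to (\LL^\bot/\LL)(-1) \to 0$; since $\stsh_{\PP}(-2)$ is acyclic on $\PP^3$ and $H^\ast(\PP^3, \Omega^1_{\PP}) \simeq \CC[-1]$ by Bott's formula, one obtains $H^1(\PP^3, (\LL^\bot/\LL)(-1)) \simeq \CC$.

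The hard part will be the bookkeeping of the infinitely many symmetric summands: although Borel--Bott--Weil handles each one individually, one must verify that no other $(k,j)$ pair in the stated ranges contributes higher cohomology, and in particular that the one-dimensional $H^1$ on $Y'$ for $j = 3$ has no companion hidden in another summand. Once these vanishings are in place, the final assertion that Assumption \ref{assumption TU1} holds for both $(Y, \stsh_Y(1))$ and $(Y', \stsh_{Y'}(1))$ follows immediately from the vanishings at the relevant small values of $j$.
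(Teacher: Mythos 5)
You are reconstructing a proof that the paper itself does not give: the lemma is quoted from Segal's computation in \cite{Seg16}, and the only in-paper analogues are the adjunction-plus-Borel--Bott--Weil computations in the proof of Theorem \ref{tilt on Y} and Appendix \ref{section BBW}. Your setup is exactly that standard route: push down along the affine projections, decompose $\pi_*\stsh_Y$ and $\pi'_*\stsh_{Y'}$ into symmetric powers, use the self-dualities $\Sub^* \simeq \Sub(1)$ and $(\LL^{\bot}/\LL)^* \simeq \LL^{\bot}/\LL$, and locate the exceptional class; in particular your derivation of $H^1(Y', \stsh_{Y'}(-3)) \simeq \CC$ from $0 \to \stsh_{\PP}(-2) \to \Omega^1_{\PP} \to (\LL^{\bot}/\LL)(-1) \to 0$ and Bott's formula is correct and is the heart of part (2).

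Two things, however, keep this from being a proof. First, the ranges cannot be taken at face value, and your proposed ``uniform positivity argument \ldots in the stated range'' would fail if you tried to run it literally: reading (1) with $j \geq -2$ includes $j = 3$, yet $H^3(Y, \stsh_Y(-3)) \simeq \CC$ (the paper itself uses $\RGamma(Y,\stsh_Y(-3)) \simeq \CC[-3]$ when identifying $\Tilt_{\mathrm{T}}$), and (2) as printed is self-contradictory ($j \geq 2$ includes $j = 3$, where $H^1 \simeq \CC$). The inequalities are reversed typos; what is true and what the applications need is vanishing for $j \leq 2$ on $Y$ and $j \leq 3$ (resp.\ $j \leq 2$ for $H^1$) on $Y'$, and it is exactly the $k = 0$ summands $\stsh(-j)$ that obstruct larger $j$. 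You must correct the range before any dominance argument can close. Second, $\Sym^k(\LL^{\bot}/\LL)(m)$ is \emph{not} an $\SL(V)$-homogeneous bundle (it depends on the symplectic form), so Borel--Bott--Weil ``on $\SL_4/P'$'' does not apply to it; either view $\PP(V) = \Sp_4/P_1$ and use $\Sp_4$-BBW (the Levi acts on $\LL^{\bot}/\LL$ through its $\Sp_2$-factor), or take symmetric powers of your exact sequence and reduce to Bott's formula on $\PP^3$. Finally, the summand-by-summand check you defer is the actual content of the lemma, but once the range is fixed it is finite: on $Y$, the $\rho$-shifted $\Sp_4$-weight of $\Sym^k\Sub(2k-j)$ is $(2k-j+2,\,k-j+1)$, strictly dominant as soon as $k \geq j$, so only the finitely many small $k$ (which are singular or dominant in the corrected range) need individual treatment, and the analogous finiteness holds on $Y'$ with the single exception $(k,j)=(1,3)$ producing the one-dimensional $H^1$.
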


\begin{lem}
$Y$ and $Y'$ satisfy Assumption \ref{assumption TU2}.
\end{lem}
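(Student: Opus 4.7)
The plan is to work out the case of $Y$ in detail; the case of $Y'$ is entirely parallel, with $\PP(V) \cong \PP^3$ and its Beilinson decomposition taking over the roles of $\LGr(V) \cong Q_3$ and its Kapranov decomposition. Writing $n = 3$ for the dimension of the exceptional locus of $\phi$ and $E := \bigoplus_{i=0}^{2} \stsh_Y(-i)$, the aim is to show that the full triangulated subcategory
\[ \mathcal{N} := \{ \mathcal{K} \in \D(Y) \mid \RHom_Y(E, \mathcal{K}) = 0 \} \subset \D(Y) \]
is stable under the cohomology functors $\mathcal{H}^k(-)$ of the standard t-structure.

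The first step is to transport the vanishing condition to the base via the projection $\pi : Y \to \LGr(V)$. The projection formula gives
\[ \RHom_Y(\stsh_Y(-i), \mathcal{K}) \simeq \RHom_{\LGr(V)}(\stsh_{\LGr(V)}(-i), R\pi_* \mathcal{K}), \]
so that $\mathcal{K} \in \mathcal{N}$ is equivalent to $R\pi_*\mathcal{K}$ lying in the right-orthogonal subcategory
\[ \langle \stsh, \stsh(-1), \stsh(-2) \rangle^{\perp} \subset \D(\LGr(V)) \cong \D(Q_3), \]
which, via a Kapranov-type semiorthogonal decomposition of $\D(Q_3)$, admits a concrete description as the triangulated subcategory generated by an appropriately twisted spinor bundle.

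The second step is to show that if $R\pi_*\mathcal{K}$ lies in this right-orthogonal, then so does $R\pi_*\mathcal{H}^k(\mathcal{K})$ for every $k$. I would induct on the amplitude of $\mathcal{K}$, using the truncation triangle $\mathcal{H}^a(\mathcal{K})[-a] \to \mathcal{K} \to \tau_{>a}\mathcal{K}$ together with the long exact sequence obtained by applying $\RHom_Y(E, -)$. To control the individual $\Ext$-terms that appear, I would exploit the Grothendieck spectral sequence $E_2^{p,q} = R^p\pi_*\mathcal{H}^q(\mathcal{K}) \Rightarrow R^{p+q}\pi_*\mathcal{K}$ together with the cohomological-dimension bound $R^p\pi_* = 0$ for $p > 2$ (the fibres of $\pi$ are affine of dimension $2$), and the explicit vanishings $H^{\geq 1}(Y, \stsh_Y(-j)) = 0$ for $j \geq -2$ recorded in Lemma~\ref{lem seg}.

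The main obstacle is that the subcategory $\langle \stsh, \stsh(-1), \stsh(-2) \rangle^{\perp} \subset \D(Q_3)$ is not stable under cohomology sheaves in general: an object of it could a priori have cohomology sheaves lying outside of it, which would prevent the induction from closing. Ruling this out requires a careful use of the vanishings of Lemma~\ref{lem seg} to constrain the sheaves $R^p\pi_*\mathcal{H}^q(\mathcal{K})$ tightly enough to force the spectral sequence pieces to fit together in the desired way. The argument for $Y'$ is formally identical, with the Beilinson decomposition $\D(\PP^3) = \langle \stsh, \stsh(1), \stsh(2), \stsh(3) \rangle$ replacing Kapranov's; the single non-trivial group $H^1(Y', \stsh_{Y'}(-3)) \simeq \CC$ from Lemma~\ref{lem seg} requires a small amount of additional bookkeeping but does not affect the overall argument.
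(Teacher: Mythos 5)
The decisive step of Assumption \ref{assumption TU2} is precisely the implication ``$\RHom_Y(\bigoplus_{i=0}^{2}\stsh_Y(-i),\mathcal{K})=0 \Rightarrow \RHom_Y(\bigoplus_{i=0}^{2}\stsh_Y(-i),\mathcal{H}^k(\mathcal{K}))=0$ for all $k$'', and your proposal does not prove it: after the (correct) reduction via the projection formula to a statement about $R\pi_*\mathcal{K}$ over the base, your final paragraph concedes that the needed stability of the orthogonal subcategory under cohomology sheaves is exactly the unresolved point, to be handled by an unspecified ``careful use'' of Lemma \ref{lem seg} and a spectral sequence. That is a restatement of the problem, not an argument. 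Moreover, the tools you point to would not supply it: $\pi$ is an affine morphism (a vector-bundle projection), so $R^p\pi_*=0$ for all $p>0$, not merely $p>2$; the Grothendieck spectral sequence therefore degenerates and only tells you $\pi_*\mathcal{H}^k(\mathcal{K})\simeq\mathcal{H}^k(R\pi_*\mathcal{K})$, while the vanishings of Lemma \ref{lem seg} concern cohomology of line bundles on $Y$ (they verify Assumption \ref{assumption TU1} and tilting statements) and are not the relevant input here. The remark that $H^1(Y',\stsh_{Y'}(-3))\simeq\CC$ requires ``additional bookkeeping'' is also a red herring: in any correct treatment the $Y'$ case is the easy one.

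What actually closes your route is structural, and it contradicts your ``main obstacle'' paragraph rather than requiring delicate estimates. Since $\pi$ is affine, $R\pi_*\mathcal{K}$ is a bounded complex with quasi-coherent (not coherent) cohomology on $\LGr(V)$, so one must use the quasi-coherent (compactly generated) version of the semiorthogonal decomposition; there, the right orthogonal of $\langle\stsh,\stsh(-1),\stsh(-2)\rangle$ is the localizing subcategory generated by the single exceptional bundle $\Sub(-2)$ (resp.\ $\stsh_{\PP^3}(-3)$ for $Y'$), which is equivalent to the derived category of $\CC$-vector spaces. Hence every object in it is a direct sum of shifts of that bundle, and the subcategory \emph{is} stable under taking cohomology sheaves --- exactly the statement you declare problematic and then leave open. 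Had you made this observation, your argument would go through and would be genuinely different from the paper's proof, which instead pushes $\mathcal{K}$ forward along the closed embedding $h:Y\hookrightarrow\PP^4_R$ (resp.\ $h':Y'\hookrightarrow\PP^3_R$), uses the relative Beilinson decomposition over $\Spec R$ so that $h_*\mathcal{K}$ lies in a two-term (resp.\ one-term) piece whose objects are extensions of pullbacks from $X$ twisted by $\stsh(-3)$, $\stsh(-4)$, and kills the connecting maps by a support argument --- thereby staying entirely within coherent sheaves. As written, however, your proposal stops short of the lemma: the key idea is missing and the machinery you invoke in its place does not produce it.
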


The proof of this lemma is almost same as in the one provided in \cite[Section 6.2]{TU10}.

\begin{proof}
First, we provide a proof for $Y$.
By using the bundle $\stsh_Y(1)$, we can embed $Y$ into $\PP^4_R$:
\[ h : Y \to \PP^4_R. \]
Let $g : \PP_R^4 \to X = \Spec R$ be a projection.
Note that the derived category $\D(\PP^4_R)$ has a semi-orthogonal decomposition
\[ \D(\PP^4_R) = \langle g^*\D(X) \otimes \stsh_{\PP^4}(-4), g^*\D(X) \otimes \stsh_{\PP^4}(-3), \cdots, g^*\D(X) \otimes \stsh_{\PP^4} \rangle. \]
Let $\mathcal{K} \in \mathrm{D}(Y)$ and assume that
\[ \RHom_Y \left( \bigoplus_{i=0}^{2} \stsh_Y(-i), \mathcal{K} \right) = 0. \]
Then $h_*\mathcal{K} \in \langle g^*\D(X) \otimes \stsh_{\PP^4}(-4), g^*\D(X) \otimes \stsh_{\PP^4}(-3) \rangle$
and hence there is an exact triangle
\[ g^*W_{-3} \otimes \stsh_Y(-3) \to h_* \mathcal{K} \to g^*W_{-4} \otimes \stsh_Y(-4), \]
where $W_l \in \D(X)$.
Note that the support of $\mathcal{H}^k(h_* \mathcal{K})$ is contained in $Y$ and the support of $\mathcal{H}^k(W_{-4}) \otimes_R \stsh_{\PP^4_R}(-4)$ is the inverse image of a closed subset of $X$ by $g$.
Thus, the map
\[ \mathcal{H}^k(h_* \mathcal{K}) \to \mathcal{H}^k(W_{-4}) \otimes_R \stsh_{\PP^4_R}(-4) \]
should be zero and we have an exact sequence
\[ 0 \to \mathcal{H}^{k-1}(W_{-4}) \otimes_R \stsh_{\PP^4_R}(-4) \to \mathcal{H}^k(W_{-3}) \otimes_R \stsh_{\PP^4_R}(-3) \to
\mathcal{H}^k(h_* \mathcal{K}) \to 0. \]
Using this sequence implies
\[ \RHom_Y\left( \bigoplus_{i=0}^{2} \stsh_Y(-i), \mathcal{H}^k(\mathcal{K}) \right) = 0. \]

Next, we prove the lemma for $Y'$.
Let $\mathcal{K'} \in \mathrm{D}(Y')$ and assume that
\[ \RHom_{Y'} \left( \bigoplus_{i=0}^{2} \stsh_{Y'}(-i), \mathcal{K} \right) = 0. \]
In this case, using an embedding $h' : Y' \hookrightarrow \PP^3_R$ gives
\[ h'_*\mathcal{K}' \in \langle \D(R) \otimes_R \stsh_{\PP^3_R}(-3) \rangle. \]
Thus $\mathcal{H}^k(h'_*\mathcal{K}') \in \langle \D(R) \otimes_R \stsh_{\PP^3_R}(-3) \rangle$, 
which proves the result.
\end{proof}

\begin{cor}
$Y$ (resp. $Y'$) admits a tilting bundle that is a projective generator of the perverse heart ${}^0\Per(Y/A_2)$ (resp. ${}^0\Per(Y'/A'_2)$).
\end{cor}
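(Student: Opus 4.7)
The plan is to invoke Toda--Uehara's construction directly, with the two preceding lemmas doing all the work. Both flopping contractions $\phi : Y \to X$ and $\phi' : Y' \to X$ satisfy $R\phi_*\stsh_Y \simeq \stsh_X \simeq R\phi'_*\stsh_{Y'}$ (they are crepant resolutions with connected fibers, and the vanishing of higher direct images follows from Lemma \ref{lem seg} together with affine base change), and all fibers have dimension at most $3$: the fibers over the singular point $o \in X$ are $\LGr(V)$ and $\PP(V) = \PP^3$, both three-dimensional, while every other fiber is a single point. Hence in the Toda--Uehara setting one takes $n = 3$.

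Lemma \ref{lem seg}, which records Segal's cohomology computations, verifies Assumption \ref{assumption TU1} for both pairs $(Y, \stsh_Y(1))$ and $(Y', \stsh_{Y'}(1))$: one needs the vanishing $H^{\geq 2}(\,\cdot\,, \stsh(-j)) = 0$ for $0 < j < 3$, and this is read off at once from parts (1) and (2) of that lemma. The lemma immediately preceding the corollary establishes Assumption \ref{assumption TU2} for both $Y$ and $Y'$.

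With both assumptions verified for $n = 3$, the Toda--Uehara theorem stated just before Remark \ref{Toda-Uehara lem} produces, via the three-step inductive construction in Section 2.3, a tilting bundle $\EE_3$ on each of $Y$ and $Y'$. Finally, Theorem 5.1 of \cite{TU10} (the theorem stated at the end of Section 2.3) asserts that under Assumption \ref{assumption TU2} the abelian subcategory ${}^0\Per(Y/A_2) \subset \D(Y)$ is indeed the heart of a bounded t-structure and that $\EE_3$ is a projective generator of it; the same argument applied to $Y'$ yields the corresponding statement for ${}^0\Per(Y'/A'_2)$.

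There is no substantive obstacle here: the content of the corollary is entirely packaged into the two preceding lemmas, and this corollary is simply the formal consequence of the Toda--Uehara machinery. The genuinely interesting work begins in the following subsections, where we will need to compute $\EE_3$ explicitly in order to compare it with Segal's tilting bundle and to apply IW mutations.
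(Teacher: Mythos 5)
Your argument is correct and is exactly what the paper intends: the corollary is stated without a separate proof precisely because, as you say, it follows formally from Lemma \ref{lem seg} (Assumption \ref{assumption TU1}), the preceding lemma (Assumption \ref{assumption TU2}), and the Toda--Uehara theorems quoted in Section \ref{sect: prelim} with $n=3$. Your additional checks (fiber dimensions, $R\phi_*\stsh_Y\simeq\stsh_X$ via affineness of $X$) are fine and only make explicit what the paper leaves implicit.
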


Explicit descriptions of these tilting bundles are given in the next section.

\subsection{Tilting bundles on $Y$ and $Y'$}

\subsubsection{Tilting bundles on $Y$.}

\begin{thm} \label{tilt on Y}
For $-2 \leq k \leq 1$, let $\Tilt_k$ be a vector bundle
\[ \Tilt_k := \stsh_Y \oplus \stsh_Y(-1) \oplus \stsh_Y(-2) \oplus \Sub(k). \]
Then, $\Tilt_k$ is a tilting bundle on $Y$.
\end{thm}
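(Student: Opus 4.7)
The plan is to reduce both conditions in the definition of a tilting bundle to statements on the base $\LGr(V)$, exploiting that the projection $\pi \colon Y \to \LGr(V)$ is affine. Since $Y = \Tot(\Sub(-1))$ we have
\[ \pi_*\stsh_Y = \bigoplus_{n \geq 0} \Sym^n \Sub^* \otimes \stsh_{\LGr}(n), \]
and the projection formula gives
\[ \Ext^i_Y(\pi^*\EE, \pi^*\sh) = \bigoplus_{n \geq 0} \Ext^i_{\LGr(V)}\bigl(\EE, \sh \otimes \Sym^n\Sub^*(n)\bigr) \]
for every pair of bundles $\EE, \sh$ on $\LGr(V)$ and every $i$. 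Each summand of $\Tilt_k$ is of the form $\pi^*\EE$, so verifying $\Ext^{>0}_Y(\Tilt_k, \Tilt_k)=0$ reduces to showing vanishing of a finite list of sheaf cohomology groups on $\LGr(V)$ of $\Sp(4)$-equivariant bundles.

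I would then carry out those vanishings via Borel-Bott-Weil. Using the decomposition $\Sub \otimes \Sub^* = \stsh_{\LGr} \oplus \Sym^2\Sub(1)$ together with the standard decomposition of $\Sym^n \Sub^* \otimes \Sym^m \Sub^*$ into irreducibles, each relevant bundle becomes a direct sum of irreducible equivariant bundles on $\LGr(V) = \Sp(4)/P$ associated to explicit weights, and one applies BBW in the form recalled in Appendix \ref{section BBW}. Lemma \ref{lem seg} already settles the cohomology of the line-bundle summands, so only the four ``mixed'' pieces $\RHom_Y(\Sub(k), \stsh_Y(-j))$ and $\RHom_Y(\stsh_Y(-j), \Sub(k))$ for $j = 0, 1, 2$ need new input.

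For classical generation, I would use that the pullback of a full exceptional collection on $\LGr(V)$ classically generates $\D(Y)$ when $\pi$ is affine. The derived category of the three-dimensional quadric $\LGr(V) \cong Q_3$ admits Kapranov's full exceptional collection built from three line bundles together with a spinor bundle, and on $\LGr(V)$ the spinor bundle agrees with $\Sub$ up to a line-bundle twist; one checks that $(\stsh_{\LGr}(-2), \stsh_{\LGr}(-1), \stsh_{\LGr}, \Sub(k))$ is full for each $-2 \leq k \leq 1$, which is precisely the kind of statement handled by the formalism of Appendix \ref{sect exc colle}. Any $\mathcal{K} \in \D(\Qcoh(Y))$ killed by $\RHom_Y(\Tilt_k, -)$ then has $R\pi_*\mathcal{K}$ killed by every member of the collection on the base, forcing $R\pi_*\mathcal{K} = 0$ and hence $\mathcal{K}=0$ because $\pi$ is affine.

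The main obstacle will be the combinatorial bookkeeping of the BBW weights, especially at the endpoint $k=1$: the mixed Ext groups $\Ext^\bullet_Y(\Sub(1), \pi^*\EE)$ and $\Ext^\bullet_Y(\pi^*\EE, \Sub(1))$ sit closest to the walls of the relevant Weyl chambers, so one must verify that the weights arising from all $n \geq 0$ in the symmetric-power expansion are either strictly dominant-regular (contributing only in degree $0$) or singular (contributing nothing). The cases $k = -2, -1, 0$ lie comfortably inside the vanishing region and should be essentially immediate from the computations already made by Segal in \cite{Seg16}.
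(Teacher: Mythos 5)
Your proposal is correct and follows essentially the same route as the paper: push forward along the affine projection $\pi$, decompose $\pi_*\stsh_Y = \bigoplus_{n}\Sym^n\Sub^*(n)$, and check the resulting $\Ext$-vanishings on $\LGr(V)$ by Borel--Bott--Weil (the paper's Appendix \ref{section BBW} just implements BBW via the embedding $\LGr(V)\subset\Gr(2,V)$ rather than directly for $\Sp(4)$). The only difference is that you spell out the generation step via the twisted/mutated Kuznetsov collections $(\stsh_{\LGr}(-2),\stsh_{\LGr}(-1),\stsh_{\LGr},\Sub(k))$, which the paper's proof leaves implicit; that argument is the standard one and is consistent with the collections used elsewhere in the paper.
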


\begin{proof}
By Lemma \ref{lem seg} (1), the direct sum of line bundles $\stsh_{Y} \oplus \stsh_{Y}(-1) \oplus \stsh_{Y}(-2)$ is a partial tilting bundle on $Y$.
It also follows from \cite{Seg16} that  $\Sub$ is a partial tilting bundle.
Since $\Sub^* \simeq \Sub(1)$, it is enough to show that
\[ H^i(Y', \Sub(j)) = 0 \]
for all $j \geq -2$ and $i >0$.
Adjunction of functors yields
\begin{align*}
H^i(Y', \Sub(j)) &\simeq H^i(\LGr(V), \bigoplus_{l \geq 0} \Sym^l (\Sub^*(1)) \otimes \Sub(j)) \\
&\simeq \bigoplus_{l \geq 0} H^i\left(\LGr(V),  \Sym^l (\Sub) \otimes \Sub \otimes \stsh(2l + j)\right) \\
&\simeq \bigoplus_{l \geq 0} H^i\left(\LGr(V),  \Sym^{l+1} (\Sub)(2l + j) \oplus \Sym^{l-1} (\Sub)(2l + j-1)\right).
\end{align*}
Using the Borel-Bott-Weil theorem as in \cite{Seg16} implies the vanishing of this cohomology for all $i >0$.
\end{proof}

\begin{prop} \label{explicit TU bundle}
Let us consider
\[ \Tilt_{\mathrm{T}} := \Tilt_{-2} =  \stsh_{Y} \oplus \stsh_{Y}(-1) \oplus \stsh_{Y}(-2) \oplus \Sub(-2). \]
Then, $\Tilt_{\mathrm{T}}$ coincides with the bundle on $Y$ constructed by Toda-Uehara's method (up to additive closure),
and hence is a projective generator of the perverse heart ${}^0\Per(Y/A_2)$.
\end{prop}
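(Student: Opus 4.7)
The plan is to execute Toda-Uehara's inductive construction on $Y$ with $n = 3$ and to show that its output $\EE_3$ has the same additive closure as $\Tilt_{\mathrm{T}}$. Since $\Tilt_{\mathrm{T}} = \EE_2 \oplus \Sub(-2)$ with $\EE_2 := \stsh_Y \oplus \stsh_Y(-1) \oplus \stsh_Y(-2)$, the work reduces to identifying the output $\mathcal{N}_2$ of Step~2 of the construction with $\Sub(-2)$ modulo $\add \EE_2$.

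Step~1 is essentially formal. For each $k \in \{1, 2\}$ the group $\Ext^1_Y(\EE_{k-1}, \stsh_Y(-k))$ decomposes as $\bigoplus_{i=0}^{k-1} H^1(Y, \stsh_Y(i-k))$, and since the twists lie in $\{-2, -1\}$ each summand vanishes by Lemma~\ref{lem seg}(1). Hence $r_0 = r_1 = 0$, so $\mathcal{N}_0 = \stsh_Y(-1)$, $\mathcal{N}_1 = \stsh_Y(-2)$, and $\EE_2 = \stsh_Y \oplus \stsh_Y(-1) \oplus \stsh_Y(-2)$ as required.

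For Step~2 the simplified form in Remark~\ref{Toda-Uehara lem} does not apply. Using $\pi_*\stsh_Y(-3) \cong \bigoplus_{l \geq 0} \Sym^l \Sub \otimes \stsh_{\LGr}(2l-3)$ and Serre duality on the $l = 0$ summand (with $\omega_{\LGr} \cong \stsh_{\LGr}(-3)$), a Borel-Bott-Weil analysis on $\LGr = \Sp(4)/P \cong Q_3$ produces a one-dimensional class in $H^3(Y, \stsh_Y(-3))$, so $H^{>1}(Y, \stsh_Y(-3)) \neq 0$. I would therefore run the full cone construction. The same BBW analysis, applied to $\stsh_Y(i-3)$ for $i = 0, 1, 2$, shows that $F(\stsh_Y(-3)) := \RHom_Y(\EE_2, \stsh_Y(-3))$ has cohomology concentrated in degrees $0$ and $3$ with $H^3 \cong \CC$ contributed only by the $l = 0$ Serre-dual class, and in a minimal projective $A_2$-resolution $P \to F(\stsh_Y(-3))$ the stupid truncation $\sigma_{\geq 1}(P)$ captures exactly this class.

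The substantive step is to identify $\mathcal{N}_2 = \Cone(G(\sigma_{\geq 1}(P)) \to \stsh_Y(-3))$ with $\Sub(-2)$ modulo $\add \EE_2$. The key geometric inputs are the Koszul resolution of the zero section $\LGr \subset Y$,
\[ 0 \to \stsh_Y(-3) \to \Sub(-1) \to \stsh_Y \to \stsh_{\LGr} \to 0, \]
coming from $\bigwedge^2 \Sub(-1) \cong \stsh_Y(-3)$, together with the pullback of the tautological sequence $0 \to \Sub(-2) \to V \otimes \stsh_Y(-2) \to \Sub(-1) \to 0$ from $\LGr$. Splicing these yields a resolution of $\stsh_{\LGr}$ whose terms lie in $\add(\EE_2) \cup \{\stsh_Y(-3), \Sub(-2)\}$, and tracing it through the adjunction $G \dashv F$ realizes the Toda-Uehara cone as $\Sub(-2)$ up to summands in $\add \EE_2$. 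The hardest part is this matching: the BBW bookkeeping across all $l$ and the threading of the minimal projective resolution through $G = - \otimes^{\mathrm{L}}_{A_2} \EE_2$ is delicate. A cleaner alternative would be to appeal to Theorem~\ref{tilt on Y} together with a direct verification that $\Tilt_{\mathrm{T}}$ lies in ${}^0\Per(Y/A_2)$ as a projective generator, exploiting the characterization of the Toda-Uehara bundle as a projective generator (up to additive closure) of this perverse heart.
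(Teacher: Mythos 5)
Your Step 1 and your diagnosis that Remark \ref{Toda-Uehara lem} fails (because $\RGamma(Y,\stsh_Y(-3))$ has a class in degree $3$ coming from $\stsh_{\LGr}(-3)\simeq\omega_{\LGr}$) agree with the paper, but the substantive step of your plan does not go through. The sequence you call the Koszul resolution of the zero section, $0 \to \stsh_Y(-3) \to \Sub(-1) \to \stsh_Y \to \stsh_{\LGr} \to 0$, has the wrong twists: the zero section is the vanishing locus of the tautological section of $\pi^*(\Sub(-1))$, so the Koszul complex involves the \emph{dual} bundle, giving $0 \to \stsh_Y(3) \to \Sub(2) \to \stsh_Y \to \stsh_{\LGr} \to 0$ (this is exactly Lemma \ref{key lem ST2}); your version cannot be exact, since it would force a surjection $\Sub(-1)|_{\LGr}\to I_{\LGr/Y}/I_{\LGr/Y}^2\simeq\Sub(2)|_{\LGr}$, which is impossible on determinants. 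Consequently the claimed resolution of $\stsh_{\LGr}$ with terms in $\add(\EE_2)\cup\{\stsh_Y(-3),\Sub(-2)\}$ does not exist, and even with the corrected Koszul sequence (whose terms are $\stsh_Y(3),\Sub(2)$) this route produces no relation between $\stsh_Y(-3)$, $\Sub(-2)$ and $\add\EE_2$. Moreover a resolution of $\stsh_{\LGr}$ is not what the construction asks for: one must exhibit $\Cone(G(\sigma_{\geq 1}(P))\to\stsh_Y(-3))$ explicitly, which requires a complex resolving (a shift of) $\stsh_Y(-3)$ itself by bundles in $\add\EE_2$ together with $\Sub(-2)$. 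Your phrase ``tracing it through the adjunction $G\dashv F$'' is where the actual content should be, and nothing is supplied there.

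What the paper uses at this point is representation-theoretic, not the Koszul geometry of the zero section: mutating Kuznetsov's full exceptional collection on $\D(\LGr(V))$ yields the exact sequence $0 \to \stsh_{\LGr}(-3) \to \Sub(-2)^{\oplus 4} \to \stsh_{\LGr}(-2)^{\oplus 11} \to \stsh_{\LGr}(-1)^{\oplus 5} \to \stsh_{\LGr} \to 0$ (Lemma \ref{app lem exact}). Pulling it back along $\pi$ gives a triangle $\pi^*\mathcal{G} \to \stsh_Y(-3) \to \Sub(-2)^{\oplus 4}$ with $\pi^*\mathcal{G}$ quasi-isomorphic to a complex with terms in $\add\EE_2$ placed in positive degrees; applying $F=\RHom_Y(\EE_2,-)$ termwise identifies $F(\pi^*\mathcal{G})$ with $\sigma_{\geq 1}(P)$ for a projective resolution $P$ of $F(\stsh_Y(-3))$, so the Toda--Uehara cone is $\Sub(-2)^{\oplus 4}$, proving the statement up to additive closure. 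This sequence, with its multiplicities $4,11,5,1$, is the missing key input, and it is not recoverable from the tautological sequence plus the zero-section Koszul complex. Finally, your fallback — verifying directly that $\Tilt_{\mathrm{T}}$ is a projective generator of ${}^0\Per(Y/A_2)$ — is only stated, not argued; checking membership and projectivity in that heart (via $i^*$, $i^!$ and the category $\mathcal{C}$) is not easier than the computation above, so as written it does not close the gap.
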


In the proof of this proposition, we use an exact sequences of vector bundles, 
whose existence is proved in Section \ref{sect exc colle} using the theory of exceptional collections.

\begin{proof}
Let $\EE_k$ $(0 \leq k \leq 2)$ be the partial tilting constructed in Toda-Uehara's inductive steps.
Lemma \ref{lem seg} implies $\EE_k = \bigoplus_{i=0}^k \stsh_Y(-i)$.
Put $A_2 := \End_Y(\EE_2)$ and
\[ F := \RHom_Y(\EE_2, -) : \D(Y) \to \D(\modu A_2). \]
The semi-orthogonal decomposition
\[ \D(\LGr(V)) = \langle \Sub(-2), \stsh_{\LGr}(-2), \stsh_{\LGr}(-1), \stsh_{\LGr} \rangle, \]
implies that there is an exact triangle in $\D(\LGr(V))$
\[ \mathcal{G} \to \stsh_{\LGr}(-3) \to \Sub(-2)^{\oplus 4} \to \mathcal{G}[1],\]
where $\mathcal{G} \in \langle \stsh_{\LGr}(-2), \stsh_{\LGr}(-1), \stsh_{\LGr} \rangle$.
Moreover, using Lemma \ref{app lem exact} (1) yields a quasi-isomorphism
\[ \mathcal{G}[1] \simeq_{\mathrm{qis}} (\cdots \to 0 \to \stsh_{\LGr}(-2)^{\oplus 11} \to \stsh_{\LGr}(-1)^{\oplus 5} \to \stsh_{\LGr} \to 0 \to \cdots) \]
of complexes (note that the degree zero term is $\stsh_{\LGr}(-2)^{\oplus 11}$, see Lemma \ref{app lem exact} for the proof).
Pulling back the above triangle to $Y$ by $\pi$ gives an exact triangle
\[ \pi^*\mathcal{G} \to \stsh_{Y}(-3) \to \Sub(-2)^{\oplus 4} \to \pi^*\mathcal{G}[1].\]
Consider the left adjoint functor $G = - \otimes_{A_2}^{\mathbf{L}} \mathcal{E}_2 : \mathrm{D}^-(A_2) \to \mathrm{D}^-(Y)$ of $F$.
Since every term of a complex
\[ \cdots \to 0 \to F(\stsh_{Y}(-2))^{\oplus 11} \to F(\stsh_{Y}(-1))^{\oplus 5} \to F(\stsh_{Y}) \to 0 \to \cdots  \]
is a projective $A_2$-module, the tensor product $G = - \otimes_{A_2}^{\mathbf{L}} \mathcal{E}_2$ applied to this complex does not derive, and hence
\[ G\left(\cdots \to 0 \to F(\stsh_{Y}(-2))^{\oplus 11} \to F(\stsh_{Y}(-1))^{\oplus 5} \to F(\stsh_{Y}) \to 0 \to \cdots \right) \simeq  \pi^*\mathcal{G}[1]. \]
Since $F \circ G \simeq \id$, the complex
\[ \left(\cdots \to 0 \to F(\stsh_{Y}(-2))^{\oplus 11} \to F(\stsh_{Y}(-1))^{\oplus 5} \to F(\stsh_{Y}) \to 0 \to \cdots \right) \]
is a projective resolution of the complex $F(\pi^*\mathcal{G}[1])$.
On the other hand, since $F(\mathcal{S}(-2))^{\oplus 4} \in \modu A_2$, 
there is a projective resolution $(P')^{\bullet}$ of $F(\mathcal{S}(-2))^{\oplus 4}$ such that
$(P')^i = 0$ for $i \geq 1$.

From now on, we construct a projective resolution of $F(\stsh_Y(-3))$ explicitly.
First, there is a chain morphism
\[ \mathcal{S}(-2)^{\oplus 4} \to \left(\cdots \to 0 \to \stsh_{Y}(-2)^{\oplus 11} \to \stsh_{Y}(-1)^{\oplus 5} \to \stsh_{Y} \to 0 \to \cdots \right) \]
whose cone is $\stsh_Y(-3)[1]$.
Applying a functor $F$ gives a morphism
\[ F(\mathcal{S}(-2))^{\oplus 4} \to \left(\cdots \to 0 \to F(\stsh_{Y}(-2))^{\oplus 11} \to F(\stsh_{Y}(-1))^{\oplus 5} \to F(\stsh_{Y}) \to 0 \to \cdots \right) \]
that remain a morphism of chain complexes,
hence there is a morphism of chain complexes
\[ (P')^{\bullet} \to \left(\cdots \to 0 \to F(\stsh_{Y}(-2))^{\oplus 11} \to F(\stsh_{Y}(-1))^{\oplus 5} \to F(\stsh_{Y}) \to 0 \to \cdots \right) \]
whose cone is quasi-isomorphic to $F(\stsh_Y(-3))[1]$.
Thus $F(\stsh_Y(-3))$ is quasi-isomorphic to a complex $P^{\bullet}$ such that
\[ P^i = \begin{cases}
(P')^{i} & \text{if $i \leq 0$} \\
F(\stsh_Y(-2))^{\oplus 11} & \text{if $i =1$} \\
F(\stsh_Y(-1))^{\oplus 5} & \text{if $i=2$} \\
F(\stsh_Y) & \text{if $i=3$} \\
0 & \text{otherwise}.
\end{cases} \]
Since all terms of $P^{\bullet}$ are projective $A_2$-modules, $P^{\bullet}$ is a projective resolution of $F(\stsh_Y(-3))$.
In particular $\sigma_{\geq 1} P^{\bullet} \simeq F(\pi^*\mathcal{G})$,
and hence 
\begin{align*}
G(\sigma_{\geq 1} P^{\bullet}) &\simeq GF(\pi^*\mathcal{G}) \\
&\simeq GFG\left(\cdots \to 0 \to F(\stsh_{Y}(-2))^{\oplus 11} \to F(\stsh_{Y}(-1))^{\oplus 5} \to F(\stsh_{Y}) \to 0 \to \cdots \right)[-1] \\
&\simeq G\left(\cdots \to 0 \to F(\stsh_{Y}(-2))^{\oplus 11} \to F(\stsh_{Y}(-1))^{\oplus 5} \to F(\stsh_{Y}) \to 0 \to \cdots \right)[-1] \\
&\simeq (\pi^* \mathcal{G}[1])[-1] \simeq \pi^*\mathcal{G}.
\end{align*}
Thus the resulting bundle obtained by Toda-Uehara's construction is $\stsh_{Y} \oplus \stsh_{Y}(-1) \oplus \stsh_{Y}(-2) \oplus \Sub(-2)^{\oplus 4}$.
\end{proof}

\begin{defi} \rm
We call the bundle
\[ \Tilt_{\mathrm{T}} := \Tilt_{-2} =  \stsh_{Y} \oplus \stsh_{Y}(-1) \oplus \stsh_{Y}(-2) \oplus \Sub(-2) \]
\textit{Toda-Uehara's tilting bundle} on $Y$.
On the other hand, let us consider a bundle
\[ \Tilt_{\mathrm{S}} := (\Tilt_0)^* \simeq \stsh_Y \oplus \stsh_Y(1) \oplus \stsh_Y(2) \oplus \Sub(1). \]
This tilting bundle coincides with the one found by Segal \cite{Seg16}.
Thus we call this bundle \textit{Segal's tilting bundle} on $Y$.
\end{defi}

\subsubsection{Tilting bundles on $Y'$.}
Since $H^1(Y', \stsh_{Y'}(-3)) \simeq \CC$ by  Lemma \ref{lem seg} (2),
there exists a rank 2 vector bundle $\Sigma$ on $Y'$ that lies in the extension
\[ 0 \to \stsh_{Y'}(-1) \to \Sigma \to \stsh_{Y'}(2) \to 0 \]
corresponding to the generator of
\[ \Ext_{Y'}^1(\stsh_{Y'}(2), \stsh_{Y'}(-1)) \simeq H^1(Y', \stsh_{Y'}(-3)) \simeq \CC. \]

Segal's tilting bundle on $Y'$ is given as follows.

\begin{prop}[\cite{Seg16}]
Put
\[ \Tilt'_{\mathrm{S}} := \stsh_{Y'} \oplus \stsh_{Y'}(-1) \oplus \stsh_{Y'}(-2) \oplus \Sigma(-1), \]
Then, $\Tilt'_{\mathrm{S}}$ is a tilting bundle on $\D(Y')$.
\end{prop}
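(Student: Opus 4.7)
The plan is to verify the two defining conditions of a tilting bundle separately: pairwise $\Ext$-vanishing in positive degrees among all summands, and classical generation of $\D(\Qcoh(Y'))$. First, the line-bundle summand $\stsh_{Y'} \oplus \stsh_{Y'}(-1) \oplus \stsh_{Y'}(-2)$ is already partial tilting: one needs $H^{>0}(Y', \stsh_{Y'}(j))$ to vanish for $-2 \leq j \leq 2$, and this follows from Lemma \ref{lem seg} (2) for the negative twists together with a direct Borel-Bott-Weil computation on $\PP(V)$ via the pushforward $\pi'_* \stsh_{Y'} \simeq \Sym^{\bullet}((\LL^{\bot}/\LL)^* \otimes \LL^{-2})$ for the non-negative twists.

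Next I would compute the mixed $\Ext$-groups and the self-$\Ext$ of $\Sigma(-1)$ by twisting the defining sequence by $\stsh_{Y'}(-1)$,
\[ 0 \to \stsh_{Y'}(-2) \to \Sigma(-1) \to \stsh_{Y'}(1) \to 0, \]
and applying $\Hom(\stsh_{Y'}(k), -)$ and $\Hom(-, \stsh_{Y'}(k))$ for $k \in \{0, -1, -2\}$. Each long exact sequence collapses to cohomology of line bundles, and every positive-degree term vanishes by the calculations above, \emph{except} for $\Ext^{\bullet}(\Sigma(-1), \stsh_{Y'}(-2))$. In that case the relevant connecting map is
\[ H^0(Y', \stsh_{Y'}) \to H^1(Y', \stsh_{Y'}(-3)) \simeq \CC, \]
given by Yoneda product with the extension class defining $\Sigma$. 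Since that class is by construction a generator of the one-dimensional $H^1(Y', \stsh_{Y'}(-3))$, the connecting map is surjective, forcing $\Ext^{>0}(\Sigma(-1), \stsh_{Y'}(-2)) = 0$. Applying $\Hom(\Sigma(-1), -)$ to the same sequence then yields $\Ext^{>0}(\Sigma(-1), \Sigma(-1)) = 0$, because both flanking Ext-groups have just been shown to vanish in positive degrees.

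For classical generation, I would first observe that the four consecutive line bundles $\stsh_{Y'}(k)$ with $-2 \leq k \leq 1$ generate $\D(\Qcoh(Y'))$: the Beilinson collection classically generates $\D(\Qcoh(\PP(V)))$, and this property transfers along the affine morphism $\pi'$ via the adjunction $\pi'^* \dashv \pi'_*$. The defining sequence of $\Sigma(-1)$ then realizes $\stsh_{Y'}(1)$ in the thick subcategory generated by the summands $\stsh_{Y'}(-2)$ and $\Sigma(-1)$ of $\Tilt'_{\mathrm{S}}$, so $\Tilt'_{\mathrm{S}}$ is a classical generator.

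The main obstacle is the single vanishing $\Ext^1(\Sigma(-1), \stsh_{Y'}(-2)) = 0$: every other $\Ext$-vanishing is a formal consequence of line-bundle cohomology, but this one depends essentially on the specific choice of $\Sigma$ as the extension corresponding to a generator of the one-dimensional $H^1(Y', \stsh_{Y'}(-3))$. This is precisely why $\Sigma$ was introduced — it is the minimal bundle modification of $\stsh_{Y'}(-1) \oplus \stsh_{Y'}(2)$ that absorbs the obstructing Ext class so that a tilting bundle exists on $Y'$, the side where $H^1(Y', \stsh_{Y'}(-3))$ is nonzero.
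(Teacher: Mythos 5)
Note first that the paper itself gives no proof of this proposition: it is quoted from Segal \cite{Seg16}, so your argument is a reconstruction of Segal's verification rather than a rival to a proof in the text. As such it is essentially correct and isolates the right point: every positive-degree $\Ext$ among the summands reduces, via the twisted defining sequence $0 \to \stsh_{Y'}(-2) \to \Sigma(-1) \to \stsh_{Y'}(1) \to 0$, to cohomology of line bundles on $Y'$, and the only non-formal vanishing is $\Ext^{1}_{Y'}(\Sigma(-1), \stsh_{Y'}(-2))$, which holds exactly because the connecting map $H^0(Y',\stsh_{Y'}) \to H^1(Y',\stsh_{Y'}(-3)) \simeq \CC$ is the product with the extension class, chosen to be a generator. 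The generation step (four consecutive twists generate $\D(\Qcoh(Y'))$ by Beilinson plus the affine morphism $\pi'$, and the defining triangle replaces $\stsh_{Y'}(1)$ by $\Sigma(-1)$) is the standard argument and is fine.

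Two small patches are needed before the Ext computation is airtight. First, your stated line-bundle input is $H^{>0}(Y',\stsh_{Y'}(j))=0$ only for $-2 \le j \le 2$, but the long exact sequence computing $\Ext^{>0}_{Y'}(\stsh_{Y'}(-2),\Sigma(-1)) \simeq H^{>0}(Y',\Sigma(1))$ involves $\stsh_{Y'}(3)$, so you also need $H^{>0}(Y',\stsh_{Y'}(3))=0$; this follows from the same pushforward/Borel--Bott--Weil computation and is contained in the fact, used later in the paper, that $H^{\ge 1}(Y',\stsh_{Y'}(-k))=0$ for all $k \le 2$ (the inequality printed in Lemma \ref{lem seg} (2) is evidently a typo). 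Second, in the self-Ext step the flanking groups after applying $\Hom(\Sigma(-1),-)$ are $\Ext^{>0}(\Sigma(-1),\stsh_{Y'}(-2))$ and $\Ext^{>0}(\Sigma(-1),\stsh_{Y'}(1))$; the latter is not among the mixed groups you computed (those had $k \in \{0,-1,-2\}$), so you should either identify it with $H^{>0}(Y',\Sigma(1))$ using $\Sigma^* \simeq \Sigma(-1)$ (as $\det\Sigma \simeq \stsh_{Y'}(1)$), or run one more long exact sequence with $\Hom(-,\stsh_{Y'}(1))$, which again uses the vanishing for $\stsh_{Y'}(3)$. With these two points supplied, your proof is complete and agrees in spirit with Segal's original verification.
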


On the other hand, using Toda-Uehara's construction provides  a new tilting bundle.

\begin{prop}
Put
\[ \Tilt'_{\mathrm{T}} :=  \stsh_{Y'} \oplus \stsh_{Y'}(-1) \oplus \stsh_{Y'}(-2) \oplus \Sigma(-2). \]
Then, $\Tilt'_{\mathrm{T}}$ is Toda-Uehara's tilting bundle on $Y'$,
and hence is a projective generator of the perverse heart ${}^0\Per(Y'/A_2')$,
where $A_2'$ is the endomorphism ring of a vector bundle $\stsh_{Y'} \oplus \stsh_{Y'}(-1) \oplus \stsh_{Y'}(-2)$.
\end{prop}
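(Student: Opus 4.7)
The plan is to run the Toda-Uehara construction explicitly on $Y'$, in parallel to the proof of the analogous statement for $Y$, and to identify the resulting extension bundle with $\Sigma(-2)$ using the defining sequence of $\Sigma$.

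First I would check that the inductive Step 1 of the construction produces only line bundles. Let $\EE'_k$ denote the partial tilting bundle at stage $k$. By Lemma \ref{lem seg} (2) we have $H^1(Y', \stsh_{Y'}(-j)) = 0$ for $j \geq 2$ and $H^1(Y', \stsh_{Y'}(-1)) = 0$ (trivially), so that
\[ \Ext_{Y'}^1\!\left( \bigoplus_{i=0}^{k-1} \stsh_{Y'}(-i), \stsh_{Y'}(-k) \right) = \bigoplus_{i=0}^{k-1} H^1(Y', \stsh_{Y'}(i-k)) = 0 \]
for $k = 1, 2$. Hence $r_0 = r_1 = 0$ and the inductive step yields $\EE'_k = \bigoplus_{i=0}^k \stsh_{Y'}(-i)$ for $0 \leq k \leq 2$, so that $A_2' = \End_{Y'}(\EE'_2)$ as stated.

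Next I would invoke Remark \ref{Toda-Uehara lem}: by Lemma \ref{lem seg} (2) we have $H^{>1}(Y', \stsh_{Y'}(-3)) = 0$, so the final step can be performed by a genuine extension rather than by truncation of a projective resolution. Compute
\[ \Ext_{Y'}^1(\EE'_2, \stsh_{Y'}(-3)) = \bigoplus_{i=0}^2 H^1(Y', \stsh_{Y'}(i-3)); \]
the terms with $i = 1, 2$ vanish by Lemma \ref{lem seg} (2), while the $i = 0$ term is $H^1(Y', \stsh_{Y'}(-3)) \simeq \CC$. Therefore the minimal number of generators is $r_2 = 1$, and $\mathcal{N}_2$ fits into an exact sequence
\[ 0 \to \stsh_{Y'}(-3) \to \mathcal{N}_2 \to \stsh_{Y'} \oplus \stsh_{Y'}(-1) \oplus \stsh_{Y'}(-2) \to 0 \]
whose extension class is concentrated in the $\stsh_{Y'}$-summand and generates $H^1(Y', \stsh_{Y'}(-3))$.

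Since the extension class vanishes on the $\stsh_{Y'}(-1) \oplus \stsh_{Y'}(-2)$ factors, the sequence splits as $\mathcal{N}_2 \simeq \Sigma' \oplus \stsh_{Y'}(-1) \oplus \stsh_{Y'}(-2)$, where $\Sigma'$ is the non-trivial extension
\[ 0 \to \stsh_{Y'}(-3) \to \Sigma' \to \stsh_{Y'} \to 0. \]
Twisting the defining sequence of $\Sigma$ by $\stsh_{Y'}(-2)$ produces precisely this sequence, so by uniqueness of the non-trivial extension (up to scaling) $\Sigma' \simeq \Sigma(-2)$. Thus the Toda-Uehara bundle is
\[ \EE'_3 = \EE'_2 \oplus \mathcal{N}_2 \simeq \stsh_{Y'}^{\oplus 2} \oplus \stsh_{Y'}(-1)^{\oplus 2} \oplus \stsh_{Y'}(-2)^{\oplus 2} \oplus \Sigma(-2), \]
which agrees with $\Tilt'_{\mathrm{T}}$ up to additive closure. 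The projective-generator statement then follows directly from the theorem of Toda-Uehara cited above, having already verified Assumptions \ref{assumption TU1} and \ref{assumption TU2} for $Y'$ in the previous subsection.

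The only delicate point is the identification $\Sigma' \simeq \Sigma(-2)$; everything else is cohomology bookkeeping that reduces to Lemma \ref{lem seg} (2). Once one notes that $\dim_{\CC} H^1(Y', \stsh_{Y'}(-3)) = 1$, the extension is uniquely determined up to isomorphism and the identification is immediate.
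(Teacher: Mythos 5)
Your proposal is correct and follows essentially the same route as the paper: verify via Lemma \ref{lem seg} (2) that Step 1 yields only the line bundles $\stsh_{Y'} \oplus \stsh_{Y'}(-1) \oplus \stsh_{Y'}(-2)$, invoke Remark \ref{Toda-Uehara lem} using $H^{>1}(Y', \stsh_{Y'}(-3)) = 0$, compute $\Ext^1_{Y'}(\EE'_2, \stsh_{Y'}(-3)) \simeq H^1(Y', \stsh_{Y'}(-3)) \simeq \CC$, and identify the resulting extension with $\Sigma(-2)$ via the twisted defining sequence of $\Sigma$; the paper just states this more tersely. The only quibble is bookkeeping: $\mathcal{N}_2 \simeq \Sigma(-2) \oplus \stsh_{Y'}(-1) \oplus \stsh_{Y'}(-2)$, so $\EE'_3$ has only one copy of $\stsh_{Y'}$ rather than two, but this is immaterial since the identification is up to additive closure.
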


\begin{proof}
If $k \leq 2$, then $H^i(Y', \stsh_{Y'}(-k)) = 0$ for all $i \geq 1$.
Furthermore, $H^1(Y', \stsh_{Y'}(-3)) \simeq \CC$ and $H^i(Y', \stsh_{Y'}(-3)) = 0$ for $i  \geq 2$.
Recall that the vector bundle $\Sigma(-2)$ lies in an exact sequence 
\[ 0 \to \stsh_{Y'}(-3) \to  \Sigma(-2) \to \stsh_{Y'} \to 0 \]
that corresponds to the generator of $H^1(Y', \stsh_{Y'}(-3))$.
Since
\[ \Ext_{Y'}^1(\stsh_{Y'} \oplus \stsh_{Y'}(-1) \oplus \stsh_{Y'}(-2), \stsh_{Y'}(-3)) \simeq H^1(Y', \stsh_{Y'}(-3)), \]
the bundle $\Tilt_{\mathrm{T}}'$ is the Toda-Uehara's tilting bundle on $Y'$ and a projective generator of the perverse heart ${}^0\Per(Y'/A_2')$
by Remark \ref{Toda-Uehara lem}.
\end{proof}

\subsection{Derived equivalences for the Abuaf flop}
In this section, we define derived equivalences induced by tilting bundles.
Put
\[ \Tilt_{\mathrm{U}} := (\Tilt_{-1})^* \simeq \stsh_Y \oplus \stsh_Y(1) \oplus \stsh_Y(2) \oplus \Sub(2). \]

\begin{lem} \label{identification}
The following isomorphism of tilting bundles on $Y^o$ holds.
\begin{enumerate}
\item[(1)] $\Tilt_{\mathrm{S}}|_{Y^o} \simeq \Tilt'_{\mathrm{S}}|_{Y^o}$.
\item[(2)] $\Tilt_{\mathrm{U}}|_{Y^o} \simeq \Tilt'_{\mathrm{T}}|_{Y^o}$.
\end{enumerate}
Thus, the following isomorphism of $R$-algebras holds.
\begin{enumerate}
\item[(i)] $\End_{Y}(\Tilt_{\mathrm{S}}) \simeq \End_{Y'}(\Tilt'_{\mathrm{S}})$.
\item[(ii)] $\End_{Y}(\Tilt_{\mathrm{U}}) \simeq \End_{Y'}(\Tilt'_{\mathrm{T}})$.
\end{enumerate}
\end{lem}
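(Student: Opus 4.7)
First, (i) and (ii) follow from (1) and (2) by the standard reflexivity argument: by Proposition \ref{tilt to NCCR}, both $\End_Y(\Tilt_{\mathrm{S}}) \simeq \End_R(\phi_*\Tilt_{\mathrm{S}})$ and $\End_{Y'}(\Tilt'_{\mathrm{S}}) \simeq \End_R(\phi'_*\Tilt'_{\mathrm{S}})$ are reflexive $R$-modules, and the singular locus of $X$ is the isolated point $o$ (of codimension five in $X$), so reflexive $R$-modules are determined by their restriction to $X_{\mathrm{sm}} = Y^o$; an isomorphism of restrictions coming from (1) therefore extends uniquely to an $R$-algebra isomorphism, giving (i), and the same reasoning derives (ii) from (2).

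\textbf{Line bundles.} For (1) and (2), the first task is to identify $\stsh_Y(1)|_{Y^o}$ with $\stsh_{Y'}(-1)|_{Y^o}$. Since $\LGr \subset Y$ and $\PP \subset Y'$ both have codimension two, purity of the Picard group gives $\Pic(Y^o) \simeq \Pic(Y) \simeq \mathbb{Z}$, with either $\stsh_Y(1)|_{Y^o}$ or $\stsh_{Y'}(1)|_{Y^o}$ as generator. To pin down that the two generators are inverse to one another I would check the sign by pairing with a flopping curve (so that the two ample polarizations lie on opposite sides of the movable cone over $X$), or equivalently by tracing the two total-space descriptions of $Y^o \simeq Y'^o$ and noting that the polarization $\stsh_Y(1) = \det \Sub^{*}$ coming from the Lagrangian Grassmannian side corresponds to $\stsh_{Y'}(-1) = \LL$ coming from the projective space side.

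\textbf{Rank-two bundles.} Next I analyze $\Sub|_{Y^o}$ via the tautological section of $\pi^{*}\Sub(-1)$ on $Y$, which vanishes exactly along $\LGr$; combined with $\det \Sub = \stsh_Y(-1)$ this produces a short exact sequence on $Y^o$
\[ 0 \to \stsh_{Y^o}(1) \to \Sub|_{Y^o} \to \stsh_{Y^o}(-2) \to 0. \]
Twisting by $\stsh_Y(1)$ and applying the line-bundle identification of the previous step realizes $\Sub(1)|_{Y^o}$ as an element of $\Ext^1_{Y^o}(\stsh_{Y^o}(-1), \stsh_{Y^o}(2))$. Similarly, twisting the defining extension of $\Sigma$ by $\stsh_{Y'}(-1)$ and restricting to $Y^o$ yields another class in the same $\Ext^1$. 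I would then (a) show $\Ext^1_{Y^o}(\stsh_{Y^o}(-1), \stsh_{Y^o}(2))$ is one-dimensional, via the local cohomology sequence along $\LGr \subset Y$ together with Lemma \ref{lem seg}, and (b) verify both classes are nonzero (the one from $\Sigma$ by construction; the one from $\Sub|_{Y^o}$ because a splitting would propagate to $\pi_{*}\Sub$ and contradict the structure of $\Sub$ as an indecomposable reflexive $R$-module). This gives $\Sub(1)|_{Y^o} \simeq \Sigma(-1)|_{Y^o}$, proving (1); tensoring once more with $\stsh_Y(1)|_{Y^o} \simeq \stsh_{Y'}(-1)|_{Y^o}$ yields $\Sub(2)|_{Y^o} \simeq \Sigma(-2)|_{Y^o}$, proving (2).

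\textbf{Main obstacle.} The most delicate step is the $\Ext^{1}$-computation on the non-compact open $Y^o$, because local cohomology along $\LGr$ (which has codimension exactly two) requires careful bookkeeping. If this route becomes cumbersome, a more conceptual alternative is to construct the isomorphism directly from the geometric identification $Y^o \simeq Y'^o$: a point of $Y^o$ corresponds to a flag $L \subset W \subset V$ with $W$ Lagrangian and $L$ the line cut out by the nonzero vector parametrizing the fibre, and under this description the universal rank-two bundle on either side can be matched up with its counterpart on the other side explicitly, bypassing the cohomological computation altogether.
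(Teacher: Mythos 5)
Your proposal reaches the correct statement, but by a genuinely different route from the paper. The paper's proof is essentially a citation: Segal \cite{Seg16} already established that $\stsh_Y(a)|_{Y^o} \simeq \stsh_{Y'}(-a)|_{Y^o}$ and $\Sub|_{Y^o} \simeq \Sigma|_{Y^o}$ (indeed $\Sigma$ is built precisely as the extension of $\Sub|_{Y^o}$ across $\PP$), so (1) and (2) follow by twisting, and (i), (ii) follow by the reflexivity argument you describe, exactly as in the proof of Proposition \ref{tilt to NCCR}. You instead reprove Segal's identification from scratch: Picard purity across the codimension-two centres identifies the line bundles up to sign, the tautological section of $\pi^*\Sub(-1)$ exhibits $\Sub(1)|_{Y^o}$ as an extension of $\stsh_{Y^o}(-1)$ by $\stsh_{Y^o}(2)$, and you compare its class with that of $\Sigma(-1)|_{Y^o}$ inside a one-dimensional $\Ext^1$. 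This is self-contained and makes the mechanism behind $\Sigma$ transparent; what the citation buys is the avoidance of the local-cohomology computation that your route requires.

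A few of your sketched steps need to be carried out carefully. First, Lemma \ref{lem seg} does not cover the twist you need: $\dim \Ext^1_{Y^o}(\stsh_{Y^o}(-1),\stsh_{Y^o}(2)) = \dim H^1(Y^o,\stsh_{Y^o}(3)) \leq 1$ requires either a Borel--Bott--Weil vanishing $H^1(Y,\stsh_Y(3))=0$ together with $H^2_{\LGr}(Y,\stsh_Y(3)) \simeq H^0\bigl(\LGr, \bigoplus_{k\geq 0}\Sym^k\Sub(-k)\bigr) \simeq \CC$, or working on the $Y'$ side, where $H^1(Y',\stsh_{Y'}(-3))\simeq\CC$ is recorded in Lemma \ref{lem seg} and the sheaves $\mathcal{H}^{\leq 1}_{\PP}$ of a locally free sheaf vanish; the same local cohomology sequence gives injectivity of the restriction map, which is what actually guarantees that the class of $\Sigma(-1)|_{Y^o}$ is nonzero (``nonzero by construction'' refers to the class on $Y'$, not a priori to its restriction). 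Second, the ``pairing with a flopping curve'' cannot be taken literally, since $Y^o$ contains no complete curves; you need either the movable-cone (opposite ample chambers) argument or your direct geometric trace $L\subset W$, which does give $\stsh_Y(1)|_{Y^o}\simeq\stsh_{Y'}(-1)|_{Y^o}$. Third, for non-splitness of $\Sub(1)|_{Y^o}$ the cleanest argument is that a splitting would extend over $Y$, because both sides are locally free (hence reflexive) and agree outside the codimension-two locus $\LGr$, and restricting to the zero section would split $\Sub$ on $\LGr(V)$, a contradiction; the indecomposability of the $R$-module $\phi_*\Sub$ (you wrote $\pi_*\Sub$, which is a sheaf on $\LGr$) is not established anywhere in the paper and is better avoided.
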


\begin{proof}
In \cite{Seg16}, Segal proved that
\[ \stsh_Y(a)|_{Y^o} \simeq \stsh_{Y'}(-a)|_{Y^o} ~ \text{ and } ~ \Sub|_{Y^o} \simeq \Sigma|_{Y^o}. \]
The result follows from these isomorphisms.
The second statement follows from the fact that the endomorphism rings are reflexive $R$-modules.
\end{proof}

\begin{rem} \rm
The vector bundle $\Tilt_{\mathrm{T}}|_{Y^o}$ on $Y^o$ extends to an bundle
\[ \stsh_{Y'} \oplus \stsh_{Y'}(1) \oplus \stsh_{Y'}(2) \oplus \Sigma(2) \]
on $Y'$.
Unfortunately, this bundle is not tilting.
\end{rem}

\begin{defi} \rm
We set 
\begin{align*}
\Lambda_{\mathrm{T}} &:= \End_Y(\Tilt_{\mathrm{T}}), \\
\Lambda_{\mathrm{S}} &:= \End_Y(\Tilt_{\mathrm{S}}) = \End_{Y'}(\Tilt'_{\mathrm{S}}), \\
\Lambda_{\mathrm{U}} &:= \End_Y(\Tilt_{\mathrm{U}}) = \End_{Y'}(\Tilt'_{\mathrm{T}}),
\end{align*}
and
\begin{align*}
\Psi_{\mathrm{T}} &:= \RHom_Y(\Tilt_{\mathrm{T}}, -) : \D(Y) \xrightarrow{\sim} \D(\modu \Lambda_{\mathrm{T}}), \\
\Psi_{\mathrm{S}} &:= \RHom_Y(\Tilt_{\mathrm{S}}, -) : \D(Y) \xrightarrow{\sim} \D(\modu \Lambda_{\mathrm{S}}), \\
\Psi_{\mathrm{U}} &:= \RHom_Y(\Tilt_{\mathrm{U}}, -) : \D(Y) \xrightarrow{\sim} \D(\modu \Lambda_{\mathrm{U}}), \\
\Psi'_{\mathrm{T}} &:= \RHom_{Y'}(\Tilt'_{\mathrm{T}}, -) : \D(Y') \xrightarrow{\sim} \D(\modu \Lambda_{\mathrm{U}}), \\
\Psi'_{\mathrm{S}} &:= \RHom_{Y'}(\Tilt'_{\mathrm{S}}, -) : \D(Y') \xrightarrow{\sim} \D(\modu \Lambda_{\mathrm{S}}).
\end{align*}
\end{defi}

\begin{defi} \rm
Consider equivalences of categories that are given as
\begin{align*}
\mathrm{Seg} &:= (\Psi'_{\mathrm{S}})^{-1} \circ \Psi_{\mathrm{S}} : \D(Y) \xrightarrow{\sim} \D(Y'), \\
\mathrm{Seg}' &:= \Seg^{-1} = (\Psi_{\mathrm{S}})^{-1} \circ \Psi'_{\mathrm{S}} : \D(Y') \xrightarrow{\sim} \D(Y).
\end{align*}
These equivalences are introduced by Segal \cite{Seg16}.
Hence we call these functors \textit{Segal's equivalences}.
On the other hand, there are equivalences
\begin{align*}
\TU' &:= \Psi_{\mathrm{U}}^{-1} \circ \Psi'_{\mathrm{T}} := \D(Y') \to \D(Y) \\
\UT &:= \TU'^{-1} = \Psi'^{-1}_{\mathrm{T}} \circ \Psi_{\mathrm{U}} : \D(Y) \to \D(Y').
\end{align*}
Since we construct these equivalence by using Toda-Uehara's tilting bundle on $Y'$,
we call these equivalences $\TU'$ and $\UT$ \textit{Toda-Uehara's equivalences}.
\end{defi}

\subsection{Segal's tilting vs Toda-Uehara's tilting}

In this subsection, we compare Toda-Uehara's tilting bundles with Segal's by using IW mutations.
We will use the following lemma.

\begin{lem} \label{lem tilt1}
Let $\mathcal{W}$ be a vector bundle on a smooth variety $Z$ and 
\[ 0 \to \EE_0 \xrightarrow{a_0} \EE_1 \xrightarrow{a_1} \EE_2 \xrightarrow{a_2} \cdots \xrightarrow{a_{m-2}} \EE_{m-1} \xrightarrow{a_{m-1}} \EE_m \to 0 \]
a long exact sequence consisting of vector bundles $\EE_k$ ($0 \leq k \leq m$) on $Z$.
Assume that 
\begin{enumerate}
\item[(a)] $\mathcal{W} \oplus \EE_0$ and $\mathcal{W} \oplus \EE_m$ are tilting bundles.
\item[(b)] $\EE_k \in \add(\mathcal{W})$ for $1 \leq k \leq m - 1$.
\end{enumerate}
Then, $\mathcal{W} \oplus \Ima(a_k)$ is a tilting bundle for all $0 \leq k \leq m - 1$.
\end{lem}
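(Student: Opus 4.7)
The plan is to induct on $k$, writing $K_j := \Ima(a_j) = \Ker(a_{j+1})$. The base cases $k=0$ and $k=m-1$ are immediate: $a_0$ injective gives $K_0 = \EE_0$ and $a_{m-1}$ surjective gives $K_{m-1} = \EE_m$, so that the conclusion reduces to hypothesis (a) in each case. For intermediate $k$, the long exact sequence breaks into the short exact sequence
\[ 0 \to K_{k-1} \to \EE_k \to K_k \to 0 \]
with $\EE_k \in \add(\mathcal{W})$ by (b). Assuming inductively that $\mathcal{W} \oplus K_{k-1}$ is tilting, applying $\Hom(\mathcal{W},-)$ and using $\Ext^1(\mathcal{W}, K_{k-1}) = 0$ from the IH shows that $\EE_k \to K_k$ is a right $\add(\mathcal{W})$-approximation of $K_k$. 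In Iyama--Wemyss language, $\mathcal{W} \oplus K_k$ is then the left mutation of $\mathcal{W} \oplus K_{k-1}$ at the summand $\mathcal{W}$, and invoking the bundle-level form of Theorem \ref{IW mutation equiv} gives that $\mathcal{W} \oplus K_k$ is again a tilting bundle; this is the cleanest route.

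To verify the tilting conditions directly, the mixed Ext-vanishings $\Ext^{>0}(\mathcal{W}, K_k) = 0$ and $\Ext^{>0}(K_k, \mathcal{W}) = 0$ follow by iterating long exact sequences along the left resolution $[\EE_0 \to \cdots \to \EE_k] \simeq K_k$ and the right co-resolution $[\EE_{k+1} \to \cdots \to \EE_m] \simeq K_k$, respectively, since the acyclicity $\Ext^{>0}(\EE_j, \mathcal{W}) = 0 = \Ext^{>0}(\mathcal{W}, \EE_j)$ for all $j$ is guaranteed by (a) and (b). The generating condition propagates by a telescoping argument: $\EE_k \in \add(\mathcal{W})$ together with the short exact sequence places $K_{k-1}$ in the thick subcategory generated by $\mathcal{W} \oplus K_k$, which by IH already contains the generator $\mathcal{W} \oplus \EE_0$.

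The hard step is the self-Ext vanishing $\Ext^{>0}(K_k, K_k) = 0$. The natural bicomplex $\RHom([\EE_{k+1} \to \cdots \to \EE_m], [\EE_0 \to \cdots \to \EE_k])$ obtained by using the right resolution on the first slot and the left resolution on the second slot lives in non-positive total degrees, which would force the vanishing; however its validity as a model for $\RHom(K_k,K_k)$ requires pairwise $\Ext^{>0}(\EE_l, \EE_j) = 0$ for $l \in \{k+1,\ldots,m\}$ and $j \in \{0,\ldots,k\}$. All cases except $(l,j) = (m,0)$ follow immediately from (a) and (b), but $\Ext^{>0}(\EE_m, \EE_0)$ need not vanish in general (for instance, the Koszul resolution on $\PP^2$ with $\mathcal{W} = \stsh(-1) \oplus \stsh(-2)$, $\EE_0 = \stsh(-3)$, $\EE_m = \stsh$ has $\Ext^2(\EE_m,\EE_0) = \CC$). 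Circumventing this is the principal obstacle in the direct approach, and I expect it to be handled either by a careful chain of long-exact-sequence arguments that uses both endpoints of (a) simultaneously with the inductive hypothesis, or — more cleanly — by appealing to the Iyama--Wemyss mutation machinery of the first paragraph, which packages the verification uniformly and sidesteps the problematic Ext-pair.
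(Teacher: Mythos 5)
Your base cases, the mixed vanishings $\Ext^{>0}(\mathcal{W},\Ima(a_k)) = 0 = \Ext^{>0}(\Ima(a_k),\mathcal{W})$, and the generation argument are all fine and agree with the paper. However, the step you yourself identify as the hard one — the self-extension vanishing $\Ext^{>0}(\Ima(a_k),\Ima(a_k))=0$ — is left unproved, and neither of your proposed escapes works as stated. The appeal to a ``bundle-level form'' of Theorem \ref{IW mutation equiv} is not available: that theorem concerns modifying modules over a normal $d$-sCY ring and yields derived equivalences of endomorphism algebras, whereas the present lemma is stated for an arbitrary smooth variety $Z$ with no singular ring in sight; moreover, in this paper the logic runs in the opposite direction — Lemma \ref{lem approx2}, which is what ties IW mutation to these bundles, takes the tilting property of $\mathcal{W}\oplus\mathcal{K}$ and $\mathcal{W}\oplus\mathcal{C}$ as a \emph{hypothesis}, so invoking mutation to produce the tilting property would be circular. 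Your bicomplex attempt also stalls on a red herring: no vanishing of $\Ext^{>0}(\EE_m,\EE_0)$ is ever needed.

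The paper closes exactly this gap with a short two-step long-exact-sequence argument using only the vanishings you had already established, inducting on the statement that $\Ima(a_{k-1})$ is partial tilting. First, apply $\Hom(-,\Ima(a_{k-1}))$ to $0 \to \Ima(a_{k-1}) \to \EE_k \to \Ima(a_k) \to 0$: since $\Ext^{\geq 1}(\EE_k,\Ima(a_{k-1}))=0$ (as $\EE_k \in \add\mathcal{W}$ and $\Ext^{\geq 1}(\mathcal{W},\Ima(a_{k-1}))=0$) and $\Ext^{\geq 1}(\Ima(a_{k-1}),\Ima(a_{k-1}))=0$ by the inductive hypothesis, one gets $\Ext^{\geq 2}(\Ima(a_k),\Ima(a_{k-1}))=0$. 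Then apply $\Hom(\Ima(a_k),-)$ to the same sequence: for $i\geq 1$, $\Ext^{i}(\Ima(a_k),\Ima(a_k))$ is squeezed between $\Ext^{i}(\Ima(a_k),\EE_k)=0$ (from $\Ext^{\geq 1}(\Ima(a_k),\mathcal{W})=0$) and $\Ext^{i+1}(\Ima(a_k),\Ima(a_{k-1}))=0$, hence vanishes. So the missing step is elementary and purely homological, but as submitted your proof does not contain it.
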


\begin{proof}
Since $\mathcal{W} \oplus \EE_0$ is a tilting bundle, $\Ext_Z^i(\mathcal{W}, \EE_0) = \Ext_Z^i(\mathcal{W}, \Ima(a_0))= 0$ for $i \geq 1$.
Let $k > 0$ and assume $\Ext^i(\mathcal{W}, \Ima(a_{k-1})) = 0$ for $i \geq 1$.
Then the exact sequence
\[ 0 \to \Ima(a_{k-1}) \to \EE_k \to \Ima(a_k) \to 0, \]
and the assumption (b) imply $\Ext_Z^i(\mathcal{W}, \Ima(a_k)) = 0$ for all $i \geq 1$.
Thus, $\Ext_Z^i(\mathcal{W}, \Ima(a_k)) = 0$ for all $i \geq 1$ and $0 \leq k \leq m - 1$.

Similarly, using the assumption that $\mathcal{W} \oplus \EE_m$ is a tilting bundle implies
$\Ext_Z^i(\Ima(a_k), \mathcal{W}) = 0$ for all $i \geq 1$ and $0 \leq k \leq m - 1$.

Next,  assume that $\Ima(a_{k-1})$ is partial tilting.
Consider the exact sequence
\[ 0 \to \Ima(a_{k-1}) \to \EE_k \to \Ima(a_k) \to 0. \]
Applying the functor $\RHom(-, \Ima(a_{k-1}))$ to this sequence gives an exact triangle
\[ \RHom_Z(\Ima(a_k), \Ima(a_{k-1})) \to \RHom_Z(\EE_k, \Ima(a_{k-1})) \to \RHom_Z(\Ima(a_{k-1}), \Ima(a_{k-1})). \]
The assumption (b) and the above arguments shows $\Ext_Z^i(\EE_k, \Ima(a_{k-1})) = 0$ for all $i \geq 1$.
Therefore, $\Ext_Z^i(\Ima(a_k), \Ima(a_{k-1})) = 0$ for all $ i \geq 2$.

Consider the sequence 
\[ 0 \to \Ima(a_{k-1}) \to \EE_k \to \Ima(a_k) \to 0 \]
again, and then applying the functor $\RHom_Z(\Ima(a_k), -)$ to this sequence gives a triangle
\[ \RHom_Z(\Ima(a_k), \Ima(a_{k-1})) \to \RHom_Z(\Ima(a_k), \EE_k) \to \RHom_Z(\Ima(a_k), \Ima(a_k)). \]
The assumption (b) and the above arguments imply $\Ext^i(\Ima(a_k), \EE_k) = 0$ for all $i \geq 1$.
Thus, from the above computation, we have $\Ext_Z^i(\Ima(a_k), \Ima(a_k)) = 0$ for all $i \geq 1$.

It is clear that $\mathcal{W} \oplus \Ima(a_k)$ is a generator.
Thus, the bundle $\mathcal{W} \oplus \Ima(a_k)$ is tilting.
\end{proof}

For the next theorem, we fix the following notations:
\begin{align*}
M_a := \phi_*\stsh_Y(a), ~S_a := \phi_*\Sub(a).
\end{align*}
Note that $M_0 = R$.
First, we compare two NCCRs $\Lambda_{\mathrm{T}}$ and $\Lambda_{\mathrm{S}}$.

\begin{thm} \label{mutation1}
A derived equivalence of NCCRs
\[ \Psi_{\mathrm{S}} \circ \Psi_{\mathrm{T}}^{-1} \simeq \RHom_{\Lambda_{\mathrm{T}}}(\RHom_Y(\Tilt_{\mathrm{T}}, \Tilt_{\mathrm{S}}), -) : \D(\modu \Lambda_{\mathrm{T}}) \xrightarrow{\sim} \D(\modu \Lambda_{\mathrm{S}}) \]
can be written as a composition of nine IW mutation functors.
\end{thm}

As in the proof of Proposition \ref{explicit TU bundle}, we will use some exact sequences from Lemma \ref{app lem exact}.

\begin{proof}
Put
\begin{align*}
\nu \Tilt &:= \stsh_{Y} \oplus \stsh_{Y}(-1) \oplus \stsh_{Y}(-2) \oplus \Sub(1) \\
\nu^2 \Tilt &:= \stsh_{Y} \oplus \stsh_{Y}(-1) \oplus \stsh_{Y}(1) \oplus \Sub(1).
\end{align*}
By Theorem \ref{tilt on Y}, these bundles are tilting.
Set
\begin{align*}
W_1 := R \oplus M_{-1} \oplus M_{-2},~
W_2 := R \oplus M_{-1} \oplus S_1,~
W_3 := R \oplus S_1 \oplus M_1.
\end{align*}
We will show that there are three isomorphisms
\begin{align*}
&\mu_{W_1}\mu_{W_1}\mu_{W_1}(\End_Y(\Tilt_{\mathrm{T}})) \simeq \End_Y(\nu \Tilt), \\
&\mu_{W_2}\mu_{W_2}\mu_{W_2}(\End_Y(\nu \Tilt)) \simeq \End_Y(\nu^2 \Tilt), \\
&\mu_{W_3}\mu_{W_3}\mu_{W_3}(\End_Y(\nu^2 \Tilt)) \simeq \End_Y(\Tilt_{\mathrm{S}}),
\end{align*}
and each IW mutation functors can be written as
\begin{align*}
&\IW_{W_1}^3 \simeq \RHom_{\Lambda_{\mathrm{T}}}(\RHom(\Tilt_{\mathrm{T}}, \nu \Tilt), -) : \D(\Lambda_{\mathrm{T}}) \xrightarrow{\sim} \D(\modu  \End_Y(\nu \Tilt)), \\
&\IW_{W_2}^3 \simeq \RHom_{\End_Y(\nu \Tilt)}(\RHom(\nu \Tilt, \nu^2 \Tilt), -) : \D(\modu \End_Y(\nu \Tilt)) \xrightarrow{\sim} \D(\modu  \End_Y(\nu^2 \Tilt)), \\
&\IW_{W_3}^3 \simeq \RHom_{\End_Y(\nu^2 \Tilt)}(\RHom(\nu^2 \Tilt, \Tilt_{\mathrm{S}}), -) : \D(\modu \End_Y(\nu^2 \Tilt)) \xrightarrow{\sim} \D(\Lambda_{\mathrm{S}}).
\end{align*}
First we provide the proof for mutations at $W_1$.
Recall that there exists an exact sequence $0 \to \Sub(k) \to \stsh_Y^{\oplus 4} \to \Sub(k+1) \to 0$ for all $k \in \mathbb{Z}$.
Composing these exact sequences for $-2 \leq k \leq 0$ gives a long exact sequence
\[ 0 \to \Sub(-2) \xrightarrow{a_{-2}} \stsh_Y(-2)^{\oplus 4} \xrightarrow{a_{-1}} \stsh_Y(-1)^{\oplus 4} \xrightarrow{a_0} \stsh_Y^{\oplus 4} \xrightarrow{a_1} \Sub(1) \to 0. \]
Pushing out this exact sequence to $X$ gives an exact sequence
\[ 0 \to S_{-2} \xrightarrow{a_{-2}} M_{-2}^{\oplus 4} \xrightarrow{a_{-1}} M_{-1}^{\oplus 4} \xrightarrow{a_0} M_0^{\oplus 4} \xrightarrow{a_1} S_1 \to 0, \]
whose splices are
\[ 0 \to S_i  \xrightarrow{a_{i}} M_{i}^{\oplus 4} \xrightarrow{a_{i+1}} S_{i+1} \to 0. \]
for $- 2 \leq i \leq 0$.
By Lemma \ref{lem approx2}, the morphism $a_{i+1}$ is a right $(\add W_1)$-approximation of $S_{i+1}$ for $-2 \leq i \leq 0$ and
\[ \mu_{W_1}(W_1 \oplus S_i) = W_1 \oplus S_{i+1}. \]
Let $Q_i := \Hom_R(W_1 \oplus S_i, W_1)$ and
\[ C_i := \mathrm{Im}(\Hom_R(W_1 \oplus S_i, M_i^{\oplus 4}) \to \Hom_R(W_1 \oplus S_i, S_{i+1})). \]
Then, IW mutation functor
\[ \Phi_{W_1} : \D(\modu \End_R(W_1 \oplus S_i)) \to \D(\modu \End_R(W_1 \oplus S_{i+1})) \]
is given by
\[ \Phi_{W_1}(-) := \RHom_{\End_R(W_1 \oplus S_i)}(Q_i \oplus C_i, -).  \]
Again, by Lemma \ref{lem approx2}, there is an isomorphism
\[ \RHom_Y(\mathcal{W}_1 \oplus \Sub(i), \mathcal{W}_1 \oplus \Sub(i+1)) \simeq Q_i \oplus C_i \]
for $-2 \leq i \leq 0$ and hence the following diagram commutes
\[ \begin{tikzcd}
\D(Y) \arrow[d, "\Psi_i"'] \arrow[rd, "\Psi_{i+1}"] & \\
\D(\modu \End_Y(\mathcal{W}_1 \oplus \Sub(i))) \arrow[r, "\IW_{W_1}"'] & \D(\modu \End_Y(\mathcal{W}_1 \oplus \Sub(i+1))),
\end{tikzcd} \]
where $\Psi_i := \RHom_Y(\mathcal{W}_1 \oplus \Sub(i), - )$.
Therefore 
\[ \IW_{W_1}^3 \simeq \Psi_1 \circ \Psi_{-2}^{-1} \simeq \RHom_{\Lambda_{\mathrm{T}}}(\RHom(\Tilt_{\mathrm{T}}, \nu \Tilt), -). \]

To show the result for $W_2$, we use an exact sequence in Lemma \ref{app lem exact} (2)
\[ 0 \to \stsh_Y(-2) \xrightarrow{b_1} \stsh_Y(-1)^{\oplus 5} \xrightarrow{b_2} \stsh_Y^{\oplus 11} \xrightarrow{b_3} \Sub(1)^{\oplus 4} \xrightarrow{b_4} \stsh_Y(1) \to 0. \]
Put $\mathcal{W}_2 := \stsh_Y(-1) \oplus \stsh_Y \oplus \Sub(1)$.
Then, $\mathcal{W}_2 \oplus \stsh_Y(-2)$ and $\mathcal{W}_2 \oplus \stsh_Y(1)$ are tilting bundles by Theorem \ref{tilt on Y}.
Therefore, by Lemma \ref{lem tilt1}, the bundle $\mathcal{W}_2 \oplus \Cok(b_j)$ is also a tilting bundle for all $1 \leq j \leq 4$.
Then the same argument as in the case of $W_1$ shows the result.

One can show for $W_3$ by using the same argument.
We note that the exact sequence we use in this case is 
\[ 0 \to \stsh_Y(-1) \xrightarrow{c_1} \stsh_Y^{\oplus 5} \xrightarrow{c_2} \Sub(1)^{\oplus 4} \xrightarrow{c_3} \stsh_Y(1)^{\oplus 5} \xrightarrow{c_4} \stsh_Y(2) \to 0, \]
which is provided in Lemma \ref{app lem exact} (3), and the cokernels are given by
\begin{align*}
\Cok(c_1) \simeq \pi^*(T_{\PP^4}(-1)|_{\LGr}),~
\Cok(c_2) \simeq \pi^*(\Omega_{\PP^4}^1(2)|_{\LGr}).
\end{align*}
\end{proof}

Next, we compare $\Lambda_{\mathrm{S}}$ with $\Lambda_{\mathrm{U}}$.
The IW mutation that connects $\Lambda_{\mathrm{S}}$ and $\Lambda_{\mathrm{U}}$ is much simpler than the one that connects $\Lambda_{\mathrm{T}}$ and $\Lambda_{\mathrm{S}}$.

\begin{thm} \label{mutation2}
Let $W_4 := M_0 \oplus M_1 \oplus M_2$.
$\Lambda_{\mathrm{U}}$ is a left IW mutation of $\Lambda_{\mathrm{S}}$ at $W_4$.
Furthermore, if we set the IW functor
\[ \IW_{W_4} : \D(\modu \Lambda_{\mathrm{S}}) \xrightarrow{\sim} \D(\modu \Lambda_{\mathrm{U}}), \]
then the following diagram commutes
\[ \begin{tikzcd}
\D(Y) \arrow[r, "\Psi_{\mathrm{S}}"] \arrow[rd, "\Psi_{\mathrm{U}}"'] & \D(\modu \Lambda_{\mathrm{S}}) \arrow[d, "\IW_{W_4}"] \\
 & \D(\modu \Lambda_{\mathrm{U}}).
\end{tikzcd} \]
\end{thm}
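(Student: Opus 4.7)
My plan is to exhibit a single short exact sequence on $Y$ that simultaneously witnesses the $(\add W_4)$-approximation required for the IW mutation and identifies the intertwining bimodule. After that, Lemma \ref{lem approx2} delivers both claims in one step, giving a one-step analogue of the three-step arguments in Theorem \ref{mutation1}.

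Concretely, I would take the key sequence on $Y$ to be
\[
0 \to \Sub(1) \to \stsh_Y(1)^{\oplus 4} \to \Sub(2) \to 0,
\]
obtained by pulling back the tautological quotient sequence $0 \to \Sub \to V \otimes \stsh_{\LGr} \to V/\Sub \to 0$ on $\LGr(V)$, identifying $V/\Sub \simeq \Sub^* \simeq \Sub(1)$ via the symplectic form, and twisting by $\stsh_Y(1)$. I then apply Lemma \ref{lem approx2} with $\mathcal{W} := \stsh_Y \oplus \stsh_Y(1) \oplus \stsh_Y(2)$, $\EE := \stsh_Y(1)^{\oplus 4}$, $\mathcal{K} := \Sub(1)$, $\mathcal{C} := \Sub(2)$. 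Assumptions (a) and (c) are immediate, and (b) holds because $\mathcal{W} \oplus \Sub(1) = \Tilt_{\mathrm{S}}$ and $\mathcal{W} \oplus \Sub(2) = \Tilt_{\mathrm{U}}$ are tilting: both are duals of bundles in the family $\{\Tilt_k\}$ from Theorem \ref{tilt on Y} (namely $\Tilt_0$ and $\Tilt_{-1}$), and the identity $\Ext^i_Y(T^*, T^*) \simeq H^i(Y, T \otimes T^*) \simeq \Ext^i_Y(T, T)$ shows that dualization preserves the tilting property.

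Part (1) of Lemma \ref{lem approx2} then produces the pushdown
\[
0 \to S_1 \to M_1^{\oplus 4} \to S_2 \to 0,
\]
in which the right-hand map is a right $(\add W_4)$-approximation of $S_2$. Combining this with Lemma \ref{lem approx} yields $\mu_{W_4}(R \oplus M_1 \oplus M_2 \oplus S_1) = W_4 \oplus S_2 = \phi_* \Tilt_{\mathrm{U}}$, whose endomorphism ring is $\Lambda_{\mathrm{U}}$ by Proposition \ref{tilt to NCCR}; this is the first assertion. For the commutative diagram, part (2) of Lemma \ref{lem approx2} identifies $\IW_{W_4}$ with $\RHom_{\Lambda_{\mathrm{S}}}(\RHom_Y(\Tilt_{\mathrm{S}}, \Tilt_{\mathrm{U}}), -)$; a standard tilting-equivalence computation then gives $\IW_{W_4}(\Psi_{\mathrm{S}}(\sh)) \simeq \RHom_Y(\Tilt_{\mathrm{U}}, \sh) = \Psi_{\mathrm{U}}(\sh)$ for all $\sh \in \D(Y)$, which is the required commutativity.

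There is no serious obstacle: since all summands and all Ext vanishings are controlled by the tilting results already established in Theorem \ref{tilt on Y}, the argument reduces to producing the tautological sequence and invoking Lemma \ref{lem approx2}. The only point requiring care is the left-versus-right mutation convention; the tautological sequence supplies a right $(\add W_4)$-approximation of $S_2$ with kernel $S_1$, and by Lemma \ref{lem approx} this is precisely the configuration realising $\Lambda_{\mathrm{U}}$ as a \emph{left} IW mutation of $\Lambda_{\mathrm{S}}$.
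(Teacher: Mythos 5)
Your proposal is correct and follows essentially the same route as the paper: the paper's proof also pushes forward the tautological sequence $0 \to \Sub(1) \to V \otimes_{\CC} \stsh_Y(1) \to \Sub(2) \to 0$ and invokes Lemma \ref{lem approx2} for both the identification $\mu_{W_4}(W_4 \oplus S_1) = W_4 \oplus S_2$ and the commutativity of the diagram. Your extra remarks (the left-mutation convention via Lemma \ref{lem approx} and the verification of the hypotheses of Lemma \ref{lem approx2} using $\Tilt_{\mathrm{S}} = \mathcal{W} \oplus \Sub(1)$, $\Tilt_{\mathrm{U}} = \mathcal{W} \oplus \Sub(2)$) only spell out details the paper leaves implicit.
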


\begin{proof}
Consider an exact sequence
\[ 0 \to S_1 \to V \otimes_{\CC} M_1 \to S_2 \to 0 \]
obtained by pushing an exact sequence
\[ 0 \to \Sub(1) \to V \otimes_{\CC} \stsh_Y(1) \to \Sub(2) \to 0 \]
on $Y$ by $\phi$.
Then, by Lemma \ref{lem approx2}, this sequence is a right $(\add W_4)$-approximation of $S_2$ and
we have $\mu_{W_4}(W_4 \oplus S_1) = W_4 \oplus S_2$.
The commutativity of the diagram also follows from Lemma \ref{lem approx2}.
\end{proof}

Summarizing the above results, we have the following corollary.

\begin{cor}
Let $\IW$ be an equivalence between $\D(\modu \Lambda_{\mathrm{T}})$ and $\D(\modu \Lambda_{\mathrm{U}})$ obtained by composing ten IW mutation functors:
\[ \IW := \IW_{W_4} \circ \IW_{W_3} \circ \IW_{W_3} \circ \IW_{W_3} \circ \IW_{W_2} \circ \IW_{W_2} \circ \IW_{W_2} \circ \IW_{W_1} \circ \IW_{W_1} \circ \IW_{W_1}. \]
The equivalence between $\D(Y)$ and $\D(Y')$ obtained by a composition
\[ \D(Y) \xrightarrow{\Psi_{\mathrm{T}}} \D(\modu \Lambda_{\mathrm{T}}) \xrightarrow{\IW} \D(\modu \Lambda_{\mathrm{U}}) \xrightarrow{\Psi_{\mathrm{T}}^{-1}} \D(Y') \]
is the inverse of the functor $\TU'$.
\end{cor}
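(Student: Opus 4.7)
The plan is to observe that this corollary is essentially a formal concatenation of the two preceding theorems: no new geometric or homological input is required beyond tracking how the equivalences compose. The key is to rewrite the target equivalence $\TU'^{-1} = \UT = \Psi'^{-1}_{\mathrm{T}} \circ \Psi_{\mathrm{U}}$ and show that the middle piece $\Psi_{\mathrm{U}} \circ \Psi_{\mathrm{T}}^{-1}$ factors as $\IW$.

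First I would unwind the definitions. By the construction in Theorem \ref{mutation1}, each of the three blocks of three mutations at $W_1$, $W_2$, $W_3$ realises an equivalence of the form $\RHom(\RHom_Y(-,-),-)$ that fits into a commutative triangle with the respective tilting-bundle equivalences $\Psi_{\mathrm{T}}$, $\Psi_{\nu\Tilt}$, $\Psi_{\nu^2\Tilt}$, $\Psi_{\mathrm{S}}$. Composing these three commutative triangles gives
\[ \IW_{W_3}^3 \circ \IW_{W_2}^3 \circ \IW_{W_1}^3 \circ \Psi_{\mathrm{T}} \simeq \Psi_{\mathrm{S}}. \]
Next, Theorem \ref{mutation2} supplies the final commutative triangle
\[ \IW_{W_4} \circ \Psi_{\mathrm{S}} \simeq \Psi_{\mathrm{U}}. \]
Concatenating these two identities yields $\IW \circ \Psi_{\mathrm{T}} \simeq \Psi_{\mathrm{U}}$, or equivalently $\IW \simeq \Psi_{\mathrm{U}} \circ \Psi_{\mathrm{T}}^{-1}$.

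Finally I would pre-compose with $\Psi'^{-1}_{\mathrm{T}}$ on the left (this is what the last arrow in the displayed composition should read; the corollary appears to have a minor typo writing $\Psi_{\mathrm{T}}^{-1}$ instead of $\Psi'^{-1}_{\mathrm{T}}$, but the source category $\D(\modu \Lambda_{\mathrm{U}})$ forces the correct interpretation). This gives
\[ \Psi'^{-1}_{\mathrm{T}} \circ \IW \circ \Psi_{\mathrm{T}} \simeq \Psi'^{-1}_{\mathrm{T}} \circ \Psi_{\mathrm{U}} = \UT = \TU'^{-1}, \]
which is exactly the claim.

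There is essentially no obstacle here — all of the hard work was carried out in Theorems \ref{mutation1} and \ref{mutation2}, where the resolutions coming from mutations of exceptional collections on $\LGr(V)$ were used (via Lemma \ref{lem approx2}) to identify the IW mutation functors with the comparison functors between tilting bundles. The only small care one needs to exercise in writing up the corollary is to make sure that the intermediate tilting bundles $\nu\Tilt$ and $\nu^2\Tilt$ play well with the sequential application of $\IW_{W_1}^3$, $\IW_{W_2}^3$, $\IW_{W_3}^3$: one checks that the target NCCRs after each triple mutation are precisely $\End_Y(\nu\Tilt)$, $\End_Y(\nu^2\Tilt)$, $\Lambda_{\mathrm{S}}$, so that the next block of mutations is applied at the correct algebra. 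This bookkeeping is automatic from the statements of Theorem \ref{mutation1} and Theorem \ref{mutation2}.
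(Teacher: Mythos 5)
Your proposal is correct and is essentially the paper's own argument: the corollary is stated with only the phrase ``summarizing the above results,'' i.e.\ it is exactly the concatenation of the commutative triangles from Theorem \ref{mutation1} (giving $\IW_{W_3}^3 \circ \IW_{W_2}^3 \circ \IW_{W_1}^3 \circ \Psi_{\mathrm{T}} \simeq \Psi_{\mathrm{S}}$) and Theorem \ref{mutation2} (giving $\IW_{W_4} \circ \Psi_{\mathrm{S}} \simeq \Psi_{\mathrm{U}}$), followed by composing with $\Psi'^{-1}_{\mathrm{T}}$ and using $\TU'^{-1} = \Psi'^{-1}_{\mathrm{T}} \circ \Psi_{\mathrm{U}}$. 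Your remark that the final arrow in the corollary should read $\Psi'^{-1}_{\mathrm{T}}$ rather than $\Psi_{\mathrm{T}}^{-1}$ is also right.
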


Later, we show that the Fourier-Mukai kernel of the functor $\TU'$ is the structure sheaf of $\widetilde{Y} \cup_E (\LGr \times \PP)$,
where $\widetilde{Y}$ is the blowing up of $Y$ (or $Y'$) along the zero section and $E$ is the exceptional divisor.


\section{Flop-Flop=Twist results and Multi-mutation=Twist result} \label{sect: twist}

In this section, we show ``flop-flop=twist" results and ``multi-mutation=twist" results for the Abuaf flop.

\subsection{spherical objects}
First, we study spherical objects on $Y$ and $Y'$.
For the definition of spherical objects and spherical twists, see Section \ref{susect: def sph}.

\begin{lem} \label{sph obj}
\begin{enumerate}
\item[(1)] Let $\iota : \LGr \hookrightarrow Y$ be the zero section.
Then, an object $\iota_* \stsh_{\LGr} \in \D(Y)$ is a spherical object.
\item[(2)] Let $\iota' : \PP \hookrightarrow Y'$ be the zero section.
Then, an object $\iota'_* \stsh_{\PP} \in \D(Y')$ is a spherical object.
\end{enumerate}
\end{lem}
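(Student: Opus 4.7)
Since $Y$ and $Y'$ are Calabi-Yau, the canonical-twist condition $\mathcal{E} \otimes \omega \simeq \mathcal{E}$ is automatic in all three cases, so in every case it suffices to check $\RHom(\mathcal{E}, \mathcal{E}) \simeq \CC \oplus \CC[-5]$. For this, my main tool is the standard Koszul computation for sheaves supported on a zero section: if $\iota : Z \hookrightarrow \Tot(\mathcal{F})$ is the zero section of a rank-$r$ vector bundle $\mathcal{F}$ on a smooth projective variety $Z$, then $L\iota^* \iota_* \mathcal{G} \simeq \mathcal{G} \otimes \bigoplus_{i=0}^r \bigwedge^i \mathcal{F}^*[i]$ for any $\mathcal{G} \in \D(Z)$ (the differentials of the Koszul resolution $\pi^*\bigwedge^\bullet \mathcal{F}^* \twoheadrightarrow \iota_* \stsh_Z$ being killed by $\iota^*$), and $(\iota_*, L\iota^*)$-adjunction yields
\[ \RHom_{\Tot(\mathcal{F})}(\iota_*\mathcal{G}, \iota_*\mathcal{G}) \simeq \bigoplus_{i=0}^r \RHom_Z\bigl(\mathcal{G}, \mathcal{G} \otimes \textstyle\bigwedge^i \mathcal{F}\bigr)[-i]. \]
In each case this reduces sphericity to a short list of cohomology computations on $\LGr$ or $\PP$, to be dispatched by Borel-Bott-Weil or Bott's formula.

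For (1) (with $Z = \LGr$, $\mathcal{F} = \Sub(-1)$, $\mathcal{G} = \stsh_{\LGr}$), I note $\bigwedge^2 \Sub(-1) \simeq \stsh_{\LGr}(-3) \simeq \omega_{\LGr}$, so the $i=0$ and $i=2$ summands contribute $\CC$ and $R\Gamma(\omega_{\LGr})[-2] = \CC[-5]$ respectively via Serre duality, and the only remaining input is the BBW vanishing $R\Gamma(\LGr, \Sub(-1)) = 0$. For (3) (with $Z = \PP$, $\mathcal{F}' = (\LL^\perp/\LL) \otimes \LL^2$, $\mathcal{G} = \stsh_\PP$), the extremal terms behave as in (1); the middle term is handled by twisting the defining sequence $0 \to \LL \to \LL^\perp \to \LL^\perp/\LL \to 0$ by $\LL^2$ to obtain $0 \to \stsh_\PP(-3) \to \Omega^1_\PP(-1) \to \mathcal{F}' \to 0$ on $\PP^3$, whose outer terms have vanishing cohomology by Bott, forcing $R\Gamma(\PP, \mathcal{F}') = 0$.

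For (2) the main obstacle is combinatorial. With $\mathcal{G} = \Sub|_{\LGr}$ the Koszul formula yields summands $R\Gamma(\LGr, \Sub^* \otimes \Sub \otimes \bigwedge^i \Sub(-1))[-i]$. The splitting $\Sub^* \otimes \Sub \simeq \stsh_{\LGr} \oplus \mathfrak{sl}(\Sub)$ together with the identification $\mathfrak{sl}(\Sub) \simeq \Sym^2 \Sub(1)$ (valid for any rank-two bundle) reduces the $\stsh$-piece to the computation of (1) and reduces the $\mathfrak{sl}$-piece, after applying the Pieri decomposition $\Sym^2 \Sub \otimes \Sub \simeq \Sym^3 \Sub \oplus \Sub(-1)$, to proving
\[ R\Gamma(\LGr, \Sym^2 \Sub(1)) = R\Gamma(\LGr, \Sym^3 \Sub) = R\Gamma(\LGr, \Sym^2 \Sub(-2)) = 0. \]
These are routine BBW checks for $\Sp(V)$-homogeneous bundles on $\LGr(V)$; Serre duality identifies the first and third up to shift, so only two independent computations are actually required.
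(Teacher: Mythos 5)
Your proposal is correct and takes essentially the same route as the paper: in each case one reduces $\RHom(\iota_*\mathcal{G},\iota_*\mathcal{G})$ to the cohomology on the zero section of $\mathcal{G}^{\vee}\otimes\mathcal{G}\otimes\bigwedge^i\mathcal{N}$, and the required inputs (the splitting of $\Sub^*\otimes\Sub$ and Pieri, the sequence $0\to\stsh_{\PP}(-3)\to\Omega^1_{\PP}(-1)\to\mathcal{N}_{\PP/Y'}\to0$, and the Borel--Bott--Weil vanishings) are exactly those used in the paper. The only cosmetic difference is that you get the direct sum $\bigoplus_i\RHom_Z(\mathcal{G},\mathcal{G}\otimes\bigwedge^i\mathcal{N})[-i]$ from the Koszul splitting of $L\iota^*\iota_*$ plus adjunction, whereas the paper reads off the same groups from the local-to-global Ext spectral sequence with $\lext^q(\iota_*\mathcal{G},\iota_*\mathcal{G})\simeq\iota_*(\mathcal{G}^{\vee}\otimes\mathcal{G}\otimes\bigwedge^q\mathcal{N})$, which degenerates for position reasons.
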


\begin{proof}
Here we provide the proof of (2) only, but one can show (1) by using the same argument.
The normal bundle $\mathcal{N}_{\PP/Y'}$ of the zero section is isomorphic to $(\LL^{\bot}/\LL) \otimes \LL^2$.
Note that this bundle lies on the exact sequence
\[ 0 \to \stsh_{\PP}(-3) \to \Omega_{\PP}^1(-1) \to \mathcal{N}_{\PP/Y'} \to 0. \]
Thus, we have
\begin{align*}
&\RGamma(\PP, \bigwedge^0  \mathcal{N}_{\PP/Y'}) \simeq \mathbb{C}, \\
&\RGamma(\PP, \bigwedge^1  \mathcal{N}_{\PP/Y'}) \simeq 0, ~ ~ \text{and} \\
&\RGamma(\PP, \bigwedge^2  \mathcal{N}_{\PP/Y'}) \simeq \RGamma(\PP, \stsh_{\PP}(-4)) \simeq \mathbb{C}[-3].
\end{align*}
Let us consider a spectral sequence
\[ E_2^{p, q} := H^p(Y', \lext_{Y'}^q(\iota'_* \stsh_{\PP}, j'_*\stsh_{\PP})) \Rightarrow E^{p+q} = \Ext_{Y'}^{p+q}(\iota'_* \stsh_{\PP}, \iota'_*\stsh_{\PP}). \]
Since we have an isomorphism 
\[ \lext_{Y'}^q(\iota'_* \stsh_{\PP}, \iota'_*\stsh_{\PP}) \simeq \iota'_*\bigwedge^q  \mathcal{N}_{\PP/Y'}, \]
we have
\begin{align*}
E_2^{p,q} = \begin{cases}
\mathbb{C} & \text{if $p=q=0$ or $p=3, q= 2$}, \\
0 & \text{otherwise}.
\end{cases}
\end{align*}
Therefore, we have
\begin{align*}
\Ext_{Y'}^{i}(\iota'_* \stsh_{\PP}, \iota'_*\stsh_{\PP}) = \begin{cases}
\mathbb{C} & \text{if $i=0$ or $i=5$}, \\
0 & \text{otherwise}.
\end{cases}
\end{align*}
Since $Y'$ is Calabi-Yau, the condition $\iota'_*\stsh_{\PP} \otimes \omega_{Y'} \simeq \iota'_*\stsh_{\PP}$ is trivially satisfied.
Hence the object $\iota'_*\stsh_{\PP}$ is a spherical object.
\end{proof}

\subsection{On the side of $Y'$}
In the present subsection, we prove a ``flop-flop=twist" result on the side of $Y'$.
The next lemma is a key of the proof of Theorem \ref{thm flop-flop}, which provides a ``flop-flop=twist" result.
Let $\widetilde{Y}$ be a blowing up of $Y$ along the zero section $\LGr$. 
This is isomorphic to the blowing up of $Y'$ along the zero section $\PP$, and these two blowups give the same exceptional divisor $E \subset \widetilde{Y}$ \cite{Seg16}.
 
\begin{lem} \label{key lem ST}
There is an exact sequence
\[ 0 \to \Sigma(-1) \to V \otimes_{\CC} \stsh_{Y'}(-1) \to \Sigma(-2) \to \iota'_*\stsh_{\PP}(-3) \to 0 \]
on $Y'$.
\end{lem}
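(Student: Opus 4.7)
My plan is to construct the morphism $\psi \colon V \otimes_{\CC} \stsh_{Y'}(-1) \to \Sigma(-2)$ as a lift of the natural Euler-type surjection, and then to identify its kernel and cokernel via the snake lemma together with the Koszul resolution of $\iota'_*\stsh_{\PP}(-3)$.

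Using the symplectic identification $\LL^{\bot} \simeq \Omega^1_{\PP}(1)$ on $\PP(V)$, the pulled-back Euler sequence twisted by $\stsh_{Y'}(-1)$ reads
\[
0 \to \pi'^*\Omega^1_{\PP} \to V \otimes_{\CC} \stsh_{Y'}(-1) \to \stsh_{Y'} \to 0,
\]
while the defining sequence of $\Sigma$ twisted by $\stsh_{Y'}(-2)$ reads $0 \to \stsh_{Y'}(-3) \to \Sigma(-2) \to \stsh_{Y'} \to 0$. I lift the Euler surjection along the latter to obtain $\psi$; the obstruction $V^* \otimes H^1(Y', \stsh_{Y'}(-2))$ vanishes by Lemma \ref{lem seg}. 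The snake lemma then identifies $\Ker \psi \simeq \Ker a$ and $\Cok \psi \simeq \Cok a$ for the induced map $a \colon \pi'^*\Omega^1_{\PP} \to \stsh_{Y'}(-3)$. Combined with the short exact sequence
\[
0 \to \stsh_{Y'}(-2) \to \pi'^*\Omega^1_{\PP} \to \pi'^*(\LL^{\bot}/\LL)(-1) \to 0
\]
(twisted pullback of the flag sequence $0 \to \LL \to \LL^{\bot} \to \LL^{\bot}/\LL \to 0$ on $\PP$) and the vanishing $\Hom(\stsh_{Y'}(-2), \stsh_{Y'}(-3)) = 0$, the map $a$ factors as $\bar a \circ p$ for some $\bar a \colon \pi'^*(\LL^{\bot}/\LL)(-1) \to \stsh_{Y'}(-3)$.

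The key step is to identify $\bar a$, up to a non-zero scalar, with the Koszul differential in the resolution
\[
0 \to \stsh_{Y'}(1) \to \pi'^*(\LL^{\bot}/\LL)(-1) \to \stsh_{Y'}(-3) \to \iota'_*\stsh_{\PP}(-3) \to 0
\]
of $\iota'_*\stsh_{\PP}(-3)$. Granting this, we obtain $\Cok \psi \simeq \iota'_*\stsh_{\PP}(-3)$ and present $\Ker a$ as an extension $0 \to \stsh_{Y'}(-2) \to \Ker a \to \stsh_{Y'}(1) \to 0$, whose class lies in the one-dimensional group $\Ext^1(\stsh_{Y'}(1), \stsh_{Y'}(-2)) \simeq H^1(Y', \stsh_{Y'}(-3)) \simeq \CC$. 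A careful bookkeeping of the extension classes via the Yoneda product shows this class coincides with that of $\Sigma(-1)$, yielding $\Ker \psi \simeq \Sigma(-1)$. The main technical obstacle is the identification of $\bar a$ with the Koszul differential; I expect this to follow from a direct Hom-group computation using the projection formula on $\pi'$ together with the Borel-Bott-Weil theorem on $\PP(V)$, combined with the non-vanishing of $\bar a$ (itself a consequence of the non-triviality of the $\Sigma$-extension that $\psi$ lifts).
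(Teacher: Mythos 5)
Your construction of the lift $\psi$ and the snake-lemma reduction to the induced map $a\colon\pi'^*\Omega^1_{\PP}\to\stsh_{Y'}(-3)$ are fine (the obstruction space $V^*\otimes H^1(Y',\stsh_{Y'}(-2))$ does vanish), and the Koszul resolution of $\iota'_*\stsh_{\PP}(-3)$ you write down is correct. The argument breaks at the factorization step. You claim $a$ factors through $\pi'^*(\LL^{\bot}/\LL)(-1)$ because $\Hom(\stsh_{Y'}(-2),\stsh_{Y'}(-3))=0$; but this Hom group must be taken on $Y'$, not on $\PP(V)$, and it equals $H^0(Y',\stsh_{Y'}(-1))$, which is non-zero: since $\pi'_*\stsh_{Y'}\simeq\bigoplus_{k\geq 0}\Sym^k(\LL^{\bot}/\LL)\otimes\LL^{-2k}$, already the $k=1$ graded piece of $\pi'_*\stsh_{Y'}(-1)$ is $(\LL^{\bot}/\LL)(1)$, whose space of global sections is $5$-dimensional (equivalently, across the flop $H^0(Y',\stsh_{Y'}(-1))\simeq H^0(Y,\stsh_Y(1))\neq 0$). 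So the composite $\stsh_{Y'}(-2)\to\pi'^*\Omega^1_{\PP}\xrightarrow{a}\stsh_{Y'}(-3)$ has no reason to vanish; note also that the lift $\psi$ is not unique (the ambiguity is $V^*\otimes H^0(Y',\stsh_{Y'}(-2))\neq 0$), so at best the factorization could hold for a carefully chosen lift, and making and controlling that choice is exactly where the content of the lemma sits.

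The same phenomenon defeats the method you propose for your ``key step''. Even granting a factorization $a=\bar a\circ p$, the group $\Hom_{Y'}(\pi'^*(\LL^{\bot}/\LL)(-1),\stsh_{Y'}(-3))$ is not one-dimensional: its $k$-th graded piece is $H^0(\PP,(\LL^{\bot}/\LL)\otimes\Sym^k(\LL^{\bot}/\LL)(2k-2))$, and besides the Koszul class in degree $k=1$ the degree $k=2$ piece already contains $H^0(\PP,(\LL^{\bot}/\LL)(2))\neq 0$. Hence non-vanishing of $\bar a$ cannot identify it with the Koszul differential up to scalar, and different non-zero maps in this space have genuinely different cokernels; similarly, non-splitness of the resulting extension $0\to\stsh_{Y'}(-2)\to\Ker\psi\to\stsh_{Y'}(1)\to 0$ is not automatic, since $\Hom(\stsh_{Y'}(1),V\otimes\stsh_{Y'}(-1))\neq 0$, and your ``bookkeeping of extension classes'' is left unproved. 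For comparison, the paper avoids all of these identifications: it transports the tautological sequence $0\to\Sub(1)\to V\otimes\stsh_Y(1)\to\Sub(2)\to 0$ from $Y$ across the common open locus $Y^o$, so the kernel is $\Sigma(-1)$ essentially by construction, and then determines the cokernel by computing $R^1j'_*(\Sigma(-1)|_{Y^o})$ on the blow-up and showing the cokernel is a torsion-free quotient of the $\stsh_{\PP}(-3)$ summand of $\Sigma(-2)|_{\PP}$. Some argument of that kind (or the paper's) is needed to pin down the maps your proposal leaves unidentified.
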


\begin{proof}
On $Y$, there is a canonical exact sequence
\[ 0 \to \Sub(1) \to V \otimes_{\CC} \stsh_Y(1) \to \Sub(2) \to 0. \]
By restricting this sequence on $Y^o$ and then extending it to $Y'$, we have a left exact sequence
\[ 0 \to \Sigma(-1) \to V \otimes_{\CC} \stsh_{Y'}(1) \xrightarrow{a} \Sigma(2). \]
Thus, it is enough to show that $\Cok(a) \simeq \stsh_{\PP}(-3)$.

Consider two open immersions $j' : Y^o \hookrightarrow Y'$ and $\tilde{j} : Y^o \hookrightarrow \widetilde{Y}$.
Since $\tilde{j}$ is an affine morphism, there are an exact sequence
\[ 0 \to \tilde{j}_* \Sigma(-1)|_{Y^o} \to V \otimes_{\CC} \tilde{j}_*\stsh_{Y'}(1)|_{Y^o} \to \tilde{j}_*\Sigma(2)|_{Y^o} \to 0 \]
and an isomorphism
\[ \RR j'_*(\Sigma(-1)|_{Y^o}) \simeq \RR\bar{p}_* \tilde{j}_*\Sigma(-1)|_{Y^o}. \]
On the other hand, there exists an exact sequence
\[ 0 \to \stsh_{\widetilde{Y}} \to \tilde{j}_* \stsh_{Y^o} \to \bigoplus_{d=1}^{\infty} \stsh_E(dE) \to 0. \]
This exact sequence together with the projection formula implies
\begin{align*}
\RR^1 j'_*(\Sigma(-1)|_{Y^o}) &\simeq \RR^1 \bar{p}_* \tilde{j}_*\Sigma(-1)|_{Y^o} \\
&\simeq \Sigma(-1)|_{\PP} \otimes \bigoplus_{d \geq 1} \RR^1\bar{p}_* \stsh_E(dE) \\
&\simeq \Sigma(-1)|_{\PP} \otimes \bigoplus_{d \geq 1} \Sym^{d-2}(\LL^{\bot}/\LL) \otimes \LL^{2d} \\
&\simeq \bigoplus_{d \geq 1} \left(\Sym^{d-2}(\LL^{\bot}/\LL) \otimes \LL^{2d+2} \right) \oplus \left(\Sym^{d-2}(\LL^{\bot}/\LL) \otimes \LL^{2d-1} \right).
\end{align*}
Since the sheaf $\Cok(a)$ is a subsheaf of $\RR^1 j'_*(\Sigma(-1)|_{Y^o})$, the map $\Sigma(-2) \to \RR^1 j'_*(\Sigma(-1)|_{Y^o})$ factors through as
\[ \Sigma(-2) \to \Sigma(-2)|_{\PP} \twoheadrightarrow \Cok(a) \hookrightarrow \RR^1 j'_*(\Sigma(-1)|_{Y^o}). \]
Note that $\Sigma(-2)|_{\PP} = \stsh_{\PP} \oplus \stsh_{\PP}(-3)$.
It is easy to observe that two sheaves (on $\PP$)
\[ \Sym^{d-2}(\LL^{\bot}/\LL) \otimes \LL^{2d+2} ~~ \text{ and } ~~ \Sym^{d-2}(\LL^{\bot}/\LL) \otimes \LL^{2d-1} \]
do not have global sections for all $d \geq 1$.
Thus, $\Cok(a)$ is a torsion free sheaf on $\PP$ that can be written as a quotient of $\stsh_{\PP}(-3)$,
which means $\Cok(a) \simeq \stsh_{\PP}(-3)$.
\end{proof}

\begin{thm} \label{thm flop-flop}
We have a functor isomorphism
\[ \UT \circ \Seg' \simeq \ST_{\iota'_* \stsh_{\PP}(-3)} \in \Auteq(\D(Y')). \]
\end{thm}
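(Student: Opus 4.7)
The strategy is to evaluate both autoequivalences on the tilting generator $\Tilt'_{\mathrm{S}}$ of $\D(Y')$ and match them summand-by-summand. Because the isomorphisms $\Tilt_{\mathrm{S}}|_{Y^o} \simeq \Tilt'_{\mathrm{S}}|_{Y^o}$ and $\Tilt_{\mathrm{U}}|_{Y^o} \simeq \Tilt'_{\mathrm{T}}|_{Y^o}$ match the indecomposable summands pairwise, the composition $\UT \circ \Seg'$ fixes the three line-bundle summands $\stsh_{Y'}$, $\stsh_{Y'}(-1)$, and $\stsh_{Y'}(-2)$. The only summand requiring a genuine computation is $\Sigma(-1)$: we have $\Seg'(\Sigma(-1)) \simeq \Sub(1)$, but $\Sub(1) \notin \add \Tilt_{\mathrm{U}}$, so I would apply $\UT$ to the canonical sequence
\[ 0 \to \Sub(1) \to V \otimes_{\CC} \stsh_Y(1) \to \Sub(2) \to 0 \]
on $Y$; combined with $\UT(\stsh_Y(1)) = \stsh_{Y'}(-1)$ and $\UT(\Sub(2)) = \Sigma(-2)$, this identifies $(\UT \circ \Seg')(\Sigma(-1))$ with the fiber (shifted cone) of the resulting morphism $V \otimes_{\CC}\stsh_{Y'}(-1) \to \Sigma(-2)$ in $\D(Y')$.

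In parallel, I would compute $\ST_{\iota'_*\stsh_\PP(-3)}$ on the same summands. Using Serre duality on the Calabi-Yau fivefold $Y'$ together with $\RGamma(\PP, \stsh_\PP(k)) = 0$ for $-3 \leq k \leq -1$, one checks $\RHom_{Y'}(\iota'_*\stsh_\PP(-3), \stsh_{Y'}(-j)) = 0$ for $0 \leq j \leq 2$, so the three line bundles are fixed by the twist. Using the defining sequence $0 \to \stsh_{Y'}(-2) \to \Sigma(-1) \to \stsh_{Y'}(1) \to 0$ together with $\RGamma(\PP, \stsh_\PP(-4)) \simeq \CC[-3]$ then gives $\RHom_{Y'}(\iota'_*\stsh_\PP(-3), \Sigma(-1)) \simeq \CC[-2]$, so $\ST_{\iota'_*\stsh_\PP(-3)}(\Sigma(-1)) \simeq \Cone(\iota'_*\stsh_\PP(-3)[-2] \to \Sigma(-1))$. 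The key Lemma \ref{key lem ST} now ties the two computations together: splitting its four-term exact sequence through $I := \Ima(V \otimes_{\CC}\stsh_{Y'}(-1) \to \Sigma(-2))$ into two short exact sequences and applying the octahedral axiom to the factorization $V \otimes_{\CC}\stsh_{Y'}(-1) \twoheadrightarrow I \hookrightarrow \Sigma(-2)$ exhibits the fiber of $V \otimes_{\CC}\stsh_{Y'}(-1) \to \Sigma(-2)$ as $\Cone(\iota'_*\stsh_\PP(-3)[-2] \to \Sigma(-1))$, exactly matching $\ST_{\iota'_*\stsh_\PP(-3)}(\Sigma(-1))$.

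The main obstacle is upgrading these object-level isomorphisms to a natural isomorphism of functors. For this I would check that the connecting morphism $\iota'_*\stsh_\PP(-3)[-2] \to \Sigma(-1)$ produced by the octahedral axiom coincides with the canonical coevaluation used in the definition of $\ST_{\iota'_*\stsh_\PP(-3)}$; agreement up to a scalar is automatic, since $\Ext^2_{Y'}(\iota'_*\stsh_\PP(-3), \Sigma(-1))$ is one-dimensional. One then verifies that the resulting isomorphisms intertwine the $\End_{Y'}(\Tilt'_{\mathrm{S}})$-actions transported by the two functors; since a derived equivalence is determined by its action on a tilting generator together with the induced endomorphism algebra isomorphism, this forces $\UT \circ \Seg' \simeq \ST_{\iota'_*\stsh_\PP(-3)}$. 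A conceptual alternative would be to compute the Fourier-Mukai kernel of $\UT \circ \Seg'$ via the kernel description of $\TU'$ from Theorem \ref{FM ker} and compare it directly with the standard kernel of the spherical twist, though this route seems more technically involved.
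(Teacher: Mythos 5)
Your proposal is correct and takes essentially the same route as the paper: evaluate both functors on the tilting generators, observe the line-bundle summands are fixed, and reduce to the single comparison $\UT(\Sub(1)) \simeq \ST_{\iota'_*\stsh_{\PP}(-3)}(\Sigma(-1))$ using the sequence $0 \to \Sub(1) \to V \otimes_{\CC} \stsh_Y(1) \to \Sub(2) \to 0$, Lemma \ref{key lem ST}, and the one-dimensionality of $\Ext^2_{Y'}(\iota'_*\stsh_{\PP}(-3), \Sigma(-1))$. Your extra discussion of upgrading the object-level isomorphism on the tilting generator to an isomorphism of functors is slightly more careful than what the paper records, but it does not constitute a different argument.
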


\begin{proof}
We have to show the following diagram commutes
\[ \begin{tikzcd}
\D(Y') \arrow[r, "\ST_{\iota'_* \stsh_{\PP}(-3)}^{-1}"] \arrow[d, "\TU'"'] & \D(Y') \arrow[d, "\Psi'_{\mathrm{S}}"] \\
\D(Y) \arrow[r, "\Psi_{\mathrm{S}}"] &  \D(\modu \Lambda_{\mathrm{S}}). 
\end{tikzcd} \]
Note that there are isomorphisms of equivalence functors
\begin{align*}
\Psi'_{\mathrm{S}} \circ \ST_{\iota'_* \stsh_{\PP}(-3)}^{-1} &\simeq \RHom_{Y'}(\ST_{\iota'_* \stsh_{\PP}(-3)}(\Tilt'_{\mathrm{S}}), -), \\
\Psi_{\mathrm{S}} \circ \TU' &\simeq \RHom_{Y'}(\TU'^{-1}(\Tilt_{\mathrm{S}}), -),
\end{align*}
and of objects
\begin{align*}
\ST_{\iota'_* \stsh_{\PP}(-3)}(\Tilt'_{\mathrm{S}}) &\simeq \stsh_{Y'} \oplus \stsh_{Y'}(-1) \oplus \stsh_{Y'}(-2) \oplus \ST_{\iota'_* \stsh_{\PP}(-3)}(\Sigma(-1)), \\
\UT(\Tilt_{\mathrm{S}}) &\simeq  \stsh_{Y'} \oplus \stsh_{Y'}(-1) \oplus \stsh_{Y'}(-2) \oplus \UT(\Sigma(-1)).
\end{align*}
Thus, it is enough to show that
\[ \ST_{j'_* \stsh_{\PP}(-3)}(\Sigma(-1)) \simeq \UT(\Sub(1)). \]
Applying the functor $\UT$ to the exact sequence
\[ 0 \to \Sub(1) \to V \otimes_{\CC} \stsh_Y(1) \to \Sub(2) \to 0 \]
gives an exact triangle on $\D(Y')$
\[ \UT(\Sub(1)) \to V \otimes_{\CC} \stsh_{Y'}(-1) \to \Sigma(-2) \to \UT(\Sub(1))[1]. \]
On the other hand,  the exact sequence
\[ 0 \to \stsh_{Y'}(-2) \to \Sigma(-1) \to \stsh_{Y'}(1) \to 0, \]
shows
\begin{align*}
\RHom_{Y'}(\iota'_* \stsh_{\PP}(-3), \Sigma(-1)) &\simeq \RHom_{Y'}(\iota'_* \stsh_{\PP}(-3), \stsh_{Y'}(1)) \\
&\simeq \RHom_{\PP}(\stsh_{\PP}(-3), \stsh_{Y'}(1) \otimes \omega_{\PP})[-2] \\
&\simeq \CC[-2].
\end{align*}
The non-trivial extension that corresponds to a generator of $\Ext^2_{Y'}(\iota'_* \stsh_{\PP}(-3), \Sigma(-1))$ is the one that was given in Lemma \ref{key lem ST}.
Thus the object $\ST_{\iota'_* \stsh_{\PP}(-3)}(\Sigma(-1))$ defined by a triangle
\[ \iota_*'\stsh_{\PP}(-3)[-2] \to \Sigma(-1) \to \ST_{\iota'_* \stsh_{\PP}(-3)}(\Sigma(-1)) \]
is quasi-isomorphic to a complex
\[ ( \cdots \to 0 \to 0 \to V \otimes_{\CC} \stsh_{Y'}(-1) \to \Sigma(-2) \to 0 \to 0 \to \cdots) \] 
whose degree zero part is $V \otimes_{\CC} \stsh_{Y'}(-1)$.
Hence there is an exact triangle
\[ \ST_{\iota'_* \stsh_{\PP}(-3)}(\Sigma(-1)) \to V \otimes_{\CC} \stsh_{Y'}(-1) \to \Sigma(-2) \to \ST_{\iota'_* \stsh_{\PP}(-3)}(\Sigma(-1))[1], \]
which implies the desired isomorphism $\UT(\Sub(1)) \simeq \ST_{\iota'_* \stsh_{\PP}(-3)}(\Sigma(-1))$.
\end{proof}

\subsection{The kernel of the equivalence $\TU'$}

In the same way as in Theorem \ref{thm flop-flop}, we can prove a ``flop-flop=twist" result on $Y$.
However, to prove this, we need the geometric description of the equivalence $\TU'$.
In the present subsection, we provide a Fourier-Mukai kernel of the equivalence $\TU'$.

\begin{lem} \label{key lem ST2}
There is an exact sequence
\[ 0 \to \stsh_Y(3) \to \Sub(2) \xrightarrow{b} \stsh_Y \to \stsh_{\LGr} \to 0 \]
on $Y$.
\end{lem}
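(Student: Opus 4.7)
The sequence should be identified as the Koszul resolution of the zero section $\iota : \LGr \hookrightarrow Y = \Tot(\Sub(-1))$, rewritten by means of the standard identifications available on the Lagrangian Grassmannian.

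First I would recall that if $\pi : W := \Tot(\EE) \to Z$ is the total space of a vector bundle $\EE$ of rank $r$, then the structure sheaf of the zero section admits the Koszul resolution
\[ 0 \to \pi^*(\bigwedge^r \EE^*) \to \cdots \to \pi^*(\bigwedge^1 \EE^*) \to \stsh_W \to \stsh_Z \to 0, \]
obtained by contracting against the tautological section of $\pi^*\EE^*$ that cuts out the zero section. In our situation $\EE = \Sub(-1)$ has rank $2$, so this Koszul complex has length two and reads
\[ 0 \to \pi^*(\bigwedge^2 \Sub(-1)^*) \to \pi^*(\Sub(-1)^*) \to \stsh_Y \to \stsh_{\LGr} \to 0. \]

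Next I would simplify the two left-hand terms using $\det\Sub = \bigwedge^2 \Sub \simeq \stsh_{\LGr}(-1)$. For a rank-$2$ bundle one has $\Sub^* \simeq \Sub \otimes (\det\Sub)^{-1}$, so $\Sub^* \simeq \Sub(1)$, and therefore
\[ \Sub(-1)^* \simeq \Sub^*(1) \simeq \Sub(2). \]
Similarly,
\[ \bigwedge^2 \Sub(-1)^* \simeq \bigwedge^2 \Sub^* \otimes \stsh_Y(2) \simeq \stsh_{\LGr}(1) \otimes \stsh_Y(2) \simeq \stsh_Y(3). \]
Substituting these identifications into the Koszul complex produces the required sequence
\[ 0 \to \stsh_Y(3) \to \Sub(2) \xrightarrow{b} \stsh_Y \to \stsh_{\LGr} \to 0. \]

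There is no real obstacle here: once the sequence is recognised as the Koszul complex of the zero section, the proof reduces to bookkeeping with $\det\Sub$. The only point worth double-checking is the compatibility of the two conventions $\stsh_Y(1) = \pi^*\stsh_{\LGr}(1)$ and $\stsh_{\LGr}(1) = \bigwedge^2 \Sub^*$, which is exactly what is used to rewrite $\bigwedge^2 \Sub(-1)^*$ as $\stsh_Y(3)$ and $\Sub(-1)^*$ as $\Sub(2)$.
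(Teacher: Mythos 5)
Your proof is correct, and it takes a genuinely different route from the paper. You identify the sequence as the Koszul resolution of the zero section $\LGr \subset Y = \Tot(\Sub(-1))$, cut out by the tautological section (which, to be precise, is a section of $\pi^*\Sub(-1)$; the Koszul differentials are contraction against it, inducing the map $\pi^*(\Sub(-1))^* \to \stsh_Y$), and then the identifications $\Sub(-1)^* \simeq \Sub(2)$ and $\bigwedge^2 \Sub(-1)^* \simeq \stsh_Y(3)$ are exactly as you say; since the zero section has codimension equal to the rank, the Koszul complex is exact, and the statement follows. The paper instead takes the extension $0 \to \stsh_{Y'}(-3) \to \Sigma(-2) \to \stsh_{Y'} \to 0$ on $Y'$, restricts it to the common open locus $Y^o$, extends to $Y$ to get a left-exact sequence $0 \to \stsh_Y(3) \to \Sub(2) \xrightarrow{b} \stsh_Y$, and then identifies $\Cok(b) \simeq \stsh_{\LGr}$ by computing $R^1 j_*\bigl(\stsh_Y(3)|_{Y^o}\bigr)$ and using torsion-freeness on $\LGr$. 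Your argument is shorter and more self-contained, but the paper's construction has the added feature that the specific morphism $b$ is the one obtained by extending the $Y'$-side sequence across the zero section, a compatibility that is invoked later (in the proofs of Theorem \ref{FM ker} and Theorem \ref{thm flop-flop2}, ``by Lemma \ref{key lem ST2} and the construction of morphisms''). If one adopts your Koszul proof, this is easily repaired: since $\Ext^2_Y(\iota_*\stsh_{\LGr}, \stsh_Y(3)) \simeq \CC$, any two exact sequences of this shape with nonzero class agree up to isomorphism, which is precisely the uniqueness the paper itself uses in the proof of Theorem \ref{FM ker}.
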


\begin{proof}
On $Y'$, there is an exact sequence
\[ 0 \to \stsh_{Y'}(-3) \to \Sigma(-2) \to \stsh_{Y'} \to 0. \]
Restricting this to $Y^o$ and then extending to $Y$, we have a left exact sequence
\[ 0 \to \stsh_Y(3) \to \Sub(2) \xrightarrow{b} \stsh_Y.  \]
Thus, it is enough to show that $\Cok(b) \simeq \stsh_{\LGr}$.

Note that this sequence cannot be right exact.
Indeed, if this is a right exact sequence, then the sequence is locally split.
This contradicts to the fact that there is no non-trivial morphism from $\Sub(2)$ to $\stsh_{\LGr}$ on $\LGr$.

Let $j : Y^o \hookrightarrow Y$ be the open immersion.
As in the proof of Lemma \ref{key lem ST}, we have
\begin{align*}
\RR^1 j_* \stsh_Y(3)|_{Y^o} &\simeq \stsh_{\LGr}(3) \otimes  \bigoplus_{d \geq 1} \Sym^{d-2}(\Sub(-1)) \otimes \omega_{\LGr}.
\end{align*}
In particular, $R^1 j_*\stsh_Y(3)|_{Y^o}$ is a vector bundle on the zero section $\LGr$,
and hence its subsheaf $\Cok(b)$ is a torsion free sheaf on $\LGr$.
In particular, the surjective morphism $\stsh_Y \to \Cok(b)$ factors through a morphism $\stsh_{\LGr} \to \Cok(b)$, which is also surjective.
Since the sheaf $\Cok(b)$ is torsion free sheaf on $\LGr$, the surjective morphism $\stsh_{\LGr} \to \Cok(b)$ should be an isomorphism.
\end{proof}

The exceptional divisor $E \subset \widetilde{Y}$ is isomorphic to $\PP_{\LGr}(\Sub(-1))$ and can be embedded into $\LGr \times \PP$ via the injective bundle map $\Sub(-1) \hookrightarrow V \otimes_{\CC} \stsh_{\LGr}(-1)$.
Put
\[ \widehat{Y} := \widetilde{Y} \cup_E (\LGr \times \PP). \]

\begin{thm} \label{FM ker}
The Fourier-Mukai kernel of the equivalence
\[ \TU' : \D(Y') \to \D(Y) \]
is given by the structure sheaf of $\widehat{Y}$:
\[ \TU' \simeq \FM_{\stsh_{\widehat{Y}}}^{Y' \to Y}. \]
\end{thm}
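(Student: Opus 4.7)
The plan is to show $F := \FM_{\stsh_{\widehat{Y}}}^{Y' \to Y}$ and $\TU' = \Psi_{\mathrm{U}}^{-1} \circ \Psi'_{\mathrm{T}}$ coincide by checking they agree on the tilting bundle $\Tilt'_{\mathrm{T}}$. Since $\Tilt'_{\mathrm{T}}$ is a projective generator of $\D(Y')$, Theorem \ref{equiv tilting} tells us that a $\CC$-linear exact functor out of $\D(Y')$ is determined by its value on $\Tilt'_{\mathrm{T}}$ together with the induced action of $\End_{Y'}(\Tilt'_{\mathrm{T}})$. So it suffices to establish $F(\Tilt'_{\mathrm{T}}) \simeq \Tilt_{\mathrm{U}}$ and verify that the induced algebra map matches the identification $\End_{Y'}(\Tilt'_{\mathrm{T}}) \simeq \End_Y(\Tilt_{\mathrm{U}})$ defining $\TU'$.

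The central computational tool is the Mayer--Vietoris short exact sequence on $Y \times Y'$,
$$0 \to \stsh_{\widehat{Y}} \to \stsh_{\widetilde{Y}} \oplus \stsh_{\LGr \times \PP} \to \stsh_E \to 0,$$
which breaks $F$ into an exact triangle of three simpler Fourier--Mukai functors. For the line bundle summands of $\Tilt'_{\mathrm{T}} = \stsh_{Y'} \oplus \stsh_{Y'}(-1) \oplus \stsh_{Y'}(-2) \oplus \Sigma(-2)$, I would show $F(\stsh_{Y'}(-i)) \simeq \stsh_Y(i)$ for $i = 0, 1, 2$. The $\stsh_{\widetilde{Y}}$ contribution gives the expected $\stsh_Y(i)$ via the standard blow-up computation, while the contributions coming from $\stsh_{\LGr \times \PP}$ and $\stsh_E$ (each supported on $\iota_*\stsh_{\LGr}$ after push-forward) cancel; this reduces to a Borel--Bott--Weil check on $\PP$ and on the $\PP^1$-bundle $E \to \LGr$.

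The key computation is $F(\Sigma(-2)) \simeq \Sub(2)$. Here I would apply $F$ to the four-term sequence of Lemma \ref{key lem ST},
$$0 \to \Sigma(-1) \to V \otimes_{\CC} \stsh_{Y'}(-1) \to \Sigma(-2) \to \iota'_*\stsh_{\PP}(-3) \to 0,$$
after first computing $F(\iota'_*\stsh_{\PP}(-3)) \simeq \iota_*\stsh_{\LGr}$ directly by a Mayer--Vietoris calculation along the zero sections. Comparing the resulting four-term complex on $Y$ with Lemma \ref{key lem ST2},
$$0 \to \stsh_Y(3) \to \Sub(2) \to \stsh_Y \to \iota_*\stsh_{\LGr} \to 0,$$
then identifies $F(\Sigma(-2))$ with $\Sub(2)$, completing $F(\Tilt'_{\mathrm{T}}) \simeq \Tilt_{\mathrm{U}}$.

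Finally, to match the induced isomorphisms $\End_{Y'}(\Tilt'_{\mathrm{T}}) \xrightarrow{\sim} \End_Y(\Tilt_{\mathrm{U}})$, I would observe that on the smooth locus $Y^o$ the scheme $\widehat{Y}$ restricts to the diagonal, so $F|_{Y^o}$ agrees with the tautological identification $\D(Y^o) \simeq \D(Y^o)$ used implicitly in the definition of $\TU'$. Both endomorphism algebras are reflexive $R$-modules by Proposition \ref{tilt to NCCR}, so any two ring isomorphisms that agree on the smooth locus must coincide. The main obstacle I anticipate is the computation of $F(\Sigma(-2))$: $\Sigma$ is defined by a non-split extension, and tracking it through the three Mayer--Vietoris pieces — particularly the restriction to the exceptional divisor $E = \PP_{\LGr}(\Sub(-1))$, which embeds non-trivially in both $\LGr$ and $\PP$ — will require careful bookkeeping with Borel--Bott--Weil on each factor.
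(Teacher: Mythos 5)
Your reduction to showing $F(\Tilt'_{\mathrm{T}})\simeq\Tilt_{\mathrm{U}}$ and your computation of the line bundle summands via the Mayer--Vietoris sequence $0 \to \stsh_{\widehat{Y}} \to \stsh_{\widetilde{Y}} \oplus \stsh_{\LGr \times \PP} \to \stsh_E \to 0$ are exactly what the paper does (and your extra care about the induced algebra isomorphism, via restriction to $Y^o$ and reflexivity, is fine). The gap is in the key step $F(\Sigma(-2))\simeq\Sub(2)$. Your intermediate claim $F(\iota'_*\stsh_{\PP}(-3))\simeq\iota_*\stsh_{\LGr}$ is false: by Theorem \ref{thm flop-flop2} one has $\TU'(\iota'_*\stsh_{\PP}(-3))\simeq\iota_*\Sub|_{\LGr}[2]$, a shifted rank-two sheaf, so no direct Mayer--Vietoris computation can return $\iota_*\stsh_{\LGr}$; you seem to have conflated the pushforward $\iota'_*\stsh_{\PP}(-3)$ with the line bundle $\stsh_{Y'}(-3)$, for which one indeed finds $\mathcal{H}^0(F(\stsh_{Y'}(-3)))\simeq\stsh_Y(3)$ and $\mathcal{H}^1(F(\stsh_{Y'}(-3)))\simeq\iota_*\stsh_{\LGr}$. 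Moreover, even with a correct value there, applying $F$ to the four-term sequence of Lemma \ref{key lem ST} leaves two unknowns, $F(\Sigma(-1))$ and $F(\Sigma(-2))$, and produces a complex whose middle term is $V\otimes_{\CC}\stsh_Y(1)$; this cannot be matched term-by-term against Lemma \ref{key lem ST2}, whose middle terms are $\Sub(2)$ and $\stsh_Y$, so the comparison you envisage does not determine $F(\Sigma(-2))$ (and since $F(\iota'_*\stsh_{\PP}(-3))$ is not even a sheaf, the transformed sequence is not a four-term exact sequence of sheaves at all).

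The paper's route avoids this: apply $F$ to the defining extension $0 \to \stsh_{Y'}(-3) \to \Sigma(-2) \to \stsh_{Y'} \to 0$, use the cohomology-sheaf computation for $F(\stsh_{Y'}(-3))$ quoted above together with $F(\stsh_{Y'})\simeq\stsh_Y$, and deduce from the long exact sequence of cohomology sheaves that $F(\Sigma(-2))$ is a sheaf sitting in an exact sequence $0 \to \stsh_Y(3) \to F(\Sigma(-2)) \to \stsh_Y \to \iota_*\stsh_{\LGr} \to 0$. Since $\Ext^2_Y(\iota_*\stsh_{\LGr},\stsh_Y(3))\simeq\CC$, this sequence must coincide with the one of Lemma \ref{key lem ST2}, whence $F(\Sigma(-2))\simeq\Sub(2)$. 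If you replace your Lemma \ref{key lem ST}-based step by this argument, the rest of your outline goes through.
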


\begin{proof}
Let $\FM_{\stsh_{\widehat{Y}}} : \D(Y') \to \D(Y)$ be the Fourier-Mukai functor whose kernel is $\stsh_{\widehat{Y}}$,
and $\FM_{\stsh_{\widehat{Y}}}^!$ the right adjoint functor of $\FM_{\stsh_{\widehat{Y}}}$.

First let us show that $\FM_{\stsh_{\widehat{Y}}}(\Tilt'_{\mathrm{T}}) \simeq \Tilt_{\mathrm{U}}$.
Computations using the exact sequence
\[ 0 \to \stsh_{\widehat{Y}} \to \stsh_{\widetilde{Y}} \oplus \stsh_{\LGr \times \PP} \to \stsh_E \to 0 \]
yields
\[ \FM_{\stsh_{\widehat{Y}}}(\stsh_{Y'}(-a)) \simeq \stsh_Y(a) \]
for $0 \leq a \leq 2$ and
\begin{align*}
\mathcal{H}^i(\FM_{\stsh_{\widehat{Y}}}(\stsh_{Y'}(-3))) \simeq \begin{cases}
\stsh_Y(3) & \text{ if $i = 0$} \\
\stsh_{\LGr} & \text{ if $i = 1$} \\
0 & \text{ otherwise.}
\end{cases}
\end{align*}
It remains to show $\FM_{\stsh_{\widehat{Y}}}(\Sigma(-2)) \simeq \Sub(2)$.
Consider the exact sequence
\[ 0 \to \stsh_{Y'}(-3) \to \Sigma(-2) \to \stsh_{Y'} \to 0. \]
Applying the functor $\FM_{\stsh_{\widehat{Y}}}$ to this sequence and taking the cohomology long exact sequence show
that $\FM_{\stsh_{\widehat{Y}}}(\Sigma(-2))$ is a coherent sheaf on $Y$ that lies in a sequence
\[ 0 \to \stsh_Y(3) \to \FM_{\stsh_{\widehat{Y}}}(\Sigma(-2)) \to \stsh_Y \to \stsh_{\LGr} \to 0. \]
Since $\Ext^1_Y(I_{\LGr/Y}, \stsh_Y(3)) \simeq \Ext^2_Y(\stsh_{\LGr}, \stsh_Y(3)) \simeq \mathbb{C}$, this exact sequence coincides with the one given in Lemma \ref{key lem ST2}.
Therefore, $\FM_{\stsh_{\widehat{Y}}}(\Sigma(-2)) \simeq \Sub(2)$ as desired.

Now, for any object $x \in \D(Y)$, there are functorial isomorphisms
\begin{align*}
\Psi_{\mathrm{T}}' \circ \FM_{\stsh_{\widehat{Y}}}^!(x) = \RHom_{Y'}(\Tilt'_{\mathrm{T}}, \FM_{\stsh_{\widehat{Y}}}^!(x)) 
\simeq \RHom_Y(\FM_{\stsh_{\widehat{Y}}}(\Tilt_{\mathrm{T}}'), x)
\simeq  \RHom_Y(\Tilt_{\mathrm{U}}, x) = \Psi_U(x),
\end{align*}
of complexes over $X = \Spec R$.
This isomorphism together with the $R$-algebra homomorphism
\[ f \colon \End_Y(\Tilt_{\mathrm{U}}) \to \End_{Y'}(\Tilt'_{\mathrm{T}}) \]
associated with $\FM_{\stsh_{\widehat{Y}}}$
induces an action of $\Lambda_{\mathrm{U}} =  \End_Y(\Tilt_{\mathrm{U}})$ on $\Psi_{\mathrm{T}}' \circ \FM_{\stsh_{\widehat{Y}}}^!(x)$.
Since the the functor $\FM_{\stsh_{\widehat{Y}}}$ is an identity outside the singularity of $X$ by construction, 
the $R$-algebra homomorphism $f$ is also an identity on $X_{\mathrm{sm}}$,
and then the Cohen-Macaulay property implies that $f$ is an isomorphism that coincides with the identification in Lemma \ref{identification}.
Thus the above isomorphism $\Psi_{\mathrm{T}}' \circ \FM_{\stsh_{\widehat{Y}}}^!(x) \simeq \Psi_U(x)$ is $\Lambda_{\mathrm{U}}$-linear, and thus the diagram
\[ \begin{tikzcd}
\D(Y) \arrow[rr, "\FM_{\stsh_{\widehat{Y}}}^!"] \arrow[rd, "\Psi_{\mathrm{U}}"'] & & \D(Y') \arrow[ld, "\Psi_{\mathrm{T}}'"] \\
& \D(\Lambda_{\mathrm{U}}) &
\end{tikzcd} \]
commutes, which proves the result.
\end{proof}

\subsection{On the side of $Y$}
Finally, we prove the following ``flop-flop=twist" result for $Y$.

\begin{thm} \label{thm flop-flop2}
\begin{enumerate}
\item[(1)] The universal subbundle $\iota_* \Sub|_{\LGr}$ on $\LGr$ is a spherical object in $\D(Y)$.
\item[(2)] Consider a spherical twist $\ST_{\iota_*(\Sub)[2]} \in \Auteq(\D(Y))$ around a sheaf $\iota_*(\Sub)[2] = \iota_*(\Sub|_{\LGr})[2]$ on $\LGr$.
Then, there exists a functor isomorphism
\[ \Seg' \circ \UT \simeq \ST_{\iota_*(\Sub)[2]} \in \Auteq(\D(Y)). \]
\end{enumerate}
\end{thm}

\begin{proof}
By Theorem \ref{thm flop-flop}, it is enough to show that
\[ \TU'(\stsh_{\PP}(-3)) \simeq \Sub|_{\LGr}[2]. \]
The proof of Theorem \ref{thm flop-flop} gives a distinguished triangle
\[ \Sigma(-1) \to \UT(\Sub(1)) \to \stsh_{\PP}(-3)[-1] \to \Sigma(-1)[1]. \]
Applying a functor $\TU'$ to this sequence gives
\[ \TU'(\Sigma(-1)) \to \Sub(1) \to \TU'(\stsh_{\PP}(-3))[-1] \to \TU'(\Sigma(-1))[1]. \]
Thus, we have to compute the object $\TU'(\Sigma(-1))$.
Consider the exact sequence
\[ 0 \to \stsh_{Y'}(-2) \to \Sigma(-1) \to \stsh_{Y'}(1) \to 0 \]
on $Y'$.
Applying the functor $\TU'$ to this sequence gives an exact triangle
\[ \stsh_Y(2) \to \TU'(\Sigma(-1)) \to \TU'(\stsh_{Y'}(1)) \to \stsh_Y(2)[1]. \]
Then,  a computation using Theorem \ref{FM ker} shows that $\TU'(\stsh_{Y'}(1))$ lies in the following triangle
\[ \TU'(\stsh_{Y'}(1)) \to I_{\LGr/Y}(-1) \oplus (V^* \otimes_{\CC} \stsh_{\LGr}) \to \Sub^*|_{\LGr} \to \TU'(\stsh_{Y'}(1))[1]. \]
In addition, the diagram
\[ \begin{tikzcd}
 & V \otimes_{\CC} \stsh_{\LGr} \arrow[r, equal] \arrow[d] & V \otimes_{\CC} \stsh_{\LGr} \arrow[d] \\
 \TU'(\stsh_{Y'}(1)) \arrow[d, equal] \arrow[r] & I_{\LGr/Y}(-1) \oplus (V^* \otimes_{\CC} \stsh_{\LGr}) \arrow[r] \arrow[d] & \Sub^*|_{\LGr} \arrow[d] \\
 \TU'(\stsh_{Y'}(1)) \arrow[r] & I_{\LGr/Y}(-1) \arrow[r] & \Sub|_{\LGr}[1],
\end{tikzcd} \]
shows that $\TU'(\stsh_{Y'}(1))$ lies in the following sequence
\[  \TU'(\stsh_{Y'}(1)) \to I_{\LGr/Y} \to \Sub|_{\LGr}[1] \to  \TU'(\stsh_{Y'}(1))[1]. \]
On the other hand, by Lemma \ref{key lem ST2} and the construction of morphisms,
we have the following morphism between exact triangles
\[ \begin{tikzcd}
\stsh_Y(2) \arrow[r] \arrow[d, equal] & \TU'(\Sigma(-1)) \arrow[d]  \arrow[r] & \TU'(\stsh_{Y'}(1)) \arrow[d] \arrow[r] & \stsh_Y(2)[1] \arrow[d, equal] \\
\stsh_Y(2) \arrow[r] & \Sub(1) \arrow[r] & I_{\LGr/Y}(-1) \arrow[r] & \stsh_Y(2)[1].
\end{tikzcd} \]
Summarising the above computations implies
\begin{align*}
\TU'(\stsh_{\PP}(-3)) &\simeq \Cone(\TU'(\Sigma(-1)) \to \Sub(1))[1] \\
&\simeq \Cone(\TU'(\stsh_{Y'}(1)) \to I_{\LGr/Y})[1] \\
&\simeq \Sub|_{\LGr}[2].
\end{align*}
\end{proof}

\subsection{Another Flop-Flop=twist result}
Put
\begin{align*}
&\Tilt_{\mathrm{U}, 1} := \stsh_Y(-1) \oplus \stsh_Y \oplus \stsh_Y(1) \oplus \Sub(1), \\
&\Tilt'_{\mathrm{T}, 1} := \stsh_{Y'}(1) \oplus \stsh_{Y'} \oplus \stsh_{Y'}(-1) \oplus \Sigma(-1), \\
&\Lambda_{\mathrm{U}, 1} := \End_Y(\Tilt_{\mathrm{U}, 1}) = \End_{Y'}(\Tilt'_{\mathrm{T}, 1}).
\end{align*}
Note that $\Tilt_{\mathrm{U}, 1}$ was denoted by $\nu^2 \Tilt$ in Theorem \ref{mutation1}.
Consider derived equivalences
\begin{align*}
\Psi_{\mathrm{U}, 1} &:= \RHom_Y(\Tilt_{\mathrm{U}, 1},-) : \D(Y) \xrightarrow{\sim} \D(\modu \Lambda_{\mathrm{U}, 1}), \\
\Psi'_{\mathrm{T}, 1} &:= \RHom_{Y'}(\Tilt'_{\mathrm{T}, 1}, -) : \D(Y') \xrightarrow{\sim} \D(\modu \Lambda_{\mathrm{U}, 1}), \\
\UT_1 &:= (\Psi'_{\mathrm{T}, 1})^{-1} \circ \Psi_{\mathrm{U}, 1} : \D(Y) \xrightarrow{\sim} \D(Y'), \\
\TU'_1 &:= \Psi_{\mathrm{U}, 1}^{-1} \circ \Psi'_{\mathrm{T}, 1} : \D(Y') \xrightarrow{\sim} \D(Y).
\end{align*}
Then $\UT_1^{-1} \simeq \TU_1'$ and the following diagram commutes
\[ \begin{tikzcd}
\D(Y) \arrow[r, "\UT_1"] \arrow[d, "- \otimes \stsh_Y(1)"'] & \D(Y') \arrow[d, "- \otimes \stsh_{Y'}(-1)"] \\
\D(Y) \arrow[r, "\UT"] &D(Y').
\end{tikzcd} \]

\begin{thm} \label{FFT}
There is a functor isomorphism
\[ \TU_1' \circ \Seg \simeq \ST_{\stsh_{\LGr}(-1)} \in \Auteq(\D(Y)). \]
\end{thm}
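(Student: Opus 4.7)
The plan is to mimic the proof pattern of Theorem \ref{thm flop-flop} and Theorem \ref{thm flop-flop2}: reduce the claimed functorial isomorphism to an object-level comparison on the tilting bundle, and check it summand by summand. Composing with $\Psi_{\mathrm{U}, 1}$ shows that $\TU'_1 \circ \Seg \simeq \ST_{\stsh_{\LGr}(-1)}$ is equivalent to the isomorphism
\[ \Seg'(\Tilt'_{\mathrm{T}, 1}) \simeq \ST^{-1}_{\stsh_{\LGr}(-1)}(\Tilt_{\mathrm{U}, 1}), \]
and since both sides are direct sums indexed by the four tilting summands, the check reduces to an isomorphism for each summand of $\Tilt_{\mathrm{U}, 1}$.

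First I would handle the three summands $\stsh_Y, \stsh_Y(1), \Sub(1)$, all of which also lie in $\Tilt_{\mathrm{S}}$: Segal's equivalence sends them to the matching summands $\stsh_{Y'}, \stsh_{Y'}(-1), \Sigma(-1)$ of $\Tilt'_{\mathrm{T}, 1}$, and $\TU'_1$ brings them back, so $\TU'_1 \circ \Seg$ acts as the identity on these. To see that the spherical twist also acts trivially, I would apply Grothendieck duality with $\iota^!\stsh_Y = \det(\Sub(-1))[-2] = \stsh_{\LGr}(-3)[-2]$ to get
\[ \RHom_Y(\iota_*\stsh_{\LGr}(-1), \mathcal{F}) \simeq \RGamma(\LGr, \iota^*\mathcal{F} \otimes \stsh_{\LGr}(-2))[-2], \]
then use Serre duality on $\LGr \simeq Q_3$ together with the Borel--Bott--Weil computations in the proof of Theorem \ref{tilt on Y} (in particular $\RGamma(\LGr, \stsh_{\LGr}(-1)) = \RGamma(\LGr, \stsh_{\LGr}(-2)) = 0$ and $\RGamma(\LGr, \Sub(-1)) = 0$) to conclude that these $\RHom$'s vanish for $\mathcal{F} \in \{\stsh_Y, \stsh_Y(1), \Sub(1)\}$.

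The essential case is $\mathcal{F} = \stsh_Y(-1)$. The same formula gives $\RHom_Y(\iota_*\stsh_{\LGr}(-1), \stsh_Y(-1)) \simeq \RGamma(\LGr, \stsh_{\LGr}(-3))[-2] \simeq \CC[-5]$, so $\ST_{\stsh_{\LGr}(-1)}(\stsh_Y(-1))$ fits in a distinguished triangle
\[ \iota_*\stsh_{\LGr}(-1)[-5] \to \stsh_Y(-1) \to \ST_{\stsh_{\LGr}(-1)}(\stsh_Y(-1)) \to \iota_*\stsh_{\LGr}(-1)[-4] \]
and has cohomology $\mathcal{H}^0 = \stsh_Y(-1)$, $\mathcal{H}^4 = \iota_*\stsh_{\LGr}(-1)$. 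To compute $\TU'_1(\Seg(\stsh_Y(-1)))$, I would apply $\Seg$ to the five-term resolution from the proof of Theorem \ref{mutation1},
\[ 0 \to \stsh_Y(-1) \to \stsh_Y^{\oplus 5} \to \Sub(1)^{\oplus 4} \to \stsh_Y(1)^{\oplus 5} \to \stsh_Y(2) \to 0, \]
obtaining a four-term complex on $Y'$ (in degrees $0$--$3$) with terms $\stsh_{Y'}^{\oplus 5}, \Sigma(-1)^{\oplus 4}, \stsh_{Y'}(-1)^{\oplus 5}, \stsh_{Y'}(-2)$ quasi-isomorphic to $\Seg(\stsh_Y(-1))$, and then apply $\TU'_1$. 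Three of the four terms are summands of $\Tilt'_{\mathrm{T}, 1}$ and go to $\stsh_Y, \Sub(1), \stsh_Y(1)$. For the last term, the commutative diagram displayed just before the theorem gives $\TU'_1(\stsh_{Y'}(-2)) \simeq \TU'(\stsh_{Y'}(-3)) \otimes \stsh_Y(-1)$; combining Theorem \ref{FM ker} with Lemma \ref{key lem ST2}, $\TU'(\stsh_{Y'}(-3))$ is quasi-isomorphic to the two-term complex $[\Sub(2) \to \stsh_Y]$ in degrees $0, 1$, hence $\TU'_1(\stsh_{Y'}(-2))$ is quasi-isomorphic to $[\Sub(1) \to \stsh_Y(-1)]$.

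Unfolding the nested complex yields a five-term complex on $Y$ in cohomological degrees $0$--$4$:
\[ \stsh_Y^{\oplus 5} \to \Sub(1)^{\oplus 4} \to \stsh_Y(1)^{\oplus 5} \xrightarrow{d^2} \Sub(1) \xrightarrow{d^3} \stsh_Y(-1), \]
where $d^3$ is the map from Lemma \ref{key lem ST2} and $d^2$ factors as $\stsh_Y(1)^{\oplus 5} \twoheadrightarrow \stsh_Y(2) \hookrightarrow \Sub(1)$, the second arrow being the inclusion from Lemma \ref{key lem ST2}. A direct cohomology computation using exactness of the original resolution together with the description of $d^3$ yields $\mathcal{H}^0 = \stsh_Y(-1)$, $\mathcal{H}^4 = \iota_*\stsh_{\LGr}(-1)$, and all intermediate cohomologies zero. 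Since $\Ext^5_Y(\iota_*\stsh_{\LGr}(-1), \stsh_Y(-1)) \simeq \CC$ by Serre duality on the Calabi--Yau $Y$, the object $\TU'_1\Seg(\stsh_Y(-1))$ is determined up to iso by a single extension class; because $\TU'_1 \circ \Seg$ is an equivalence sending the indecomposable $\stsh_Y(-1)$ to an indecomposable, this class is non-zero, and similarly $\ST_{\stsh_{\LGr}(-1)}(\stsh_Y(-1))$ is non-split by construction, so the two must coincide. The main obstacle will be pinning down the differentials $d^2, d^3$ precisely and matching the extension class; this requires tracking morphisms carefully through $\Seg$ and the explicit Fourier--Mukai kernel of $\TU'$ from Theorem \ref{FM ker}.
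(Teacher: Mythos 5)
Your global strategy is the paper's: reduce the functor isomorphism to an object-level comparison of the two tilting objects defining $\Psi'_{\mathrm{T},1}\circ\Seg$ and $\Psi_{\mathrm{U},1}\circ\ST_{\stsh_{\LGr}(-1)}$, and check summand by summand. Your treatment of the three summands $\stsh_Y,\stsh_Y(1),\Sub(1)$ is correct (the vanishings $\RGamma(\LGr,\stsh_{\LGr}(-1))=\RGamma(\LGr,\stsh_{\LGr}(-2))=\RGamma(\LGr,\Sub(-1))=0$ do hold and give $\ST_{\stsh_{\LGr}(-1)}^{\pm 1}(\mathcal{F})\simeq\mathcal{F}$ there), and your endgame for the remaining summand --- once one knows that $\TU'_1\Seg(\stsh_Y(-1))$ has cohomology sheaves exactly $\stsh_Y(-1)$ in degree $0$ and $\iota_*\stsh_{\LGr}(-1)$ in degree $4$, the isomorphism with $\ST_{\stsh_{\LGr}(-1)}(\stsh_Y(-1))$ follows from $\Ext^5_Y(\iota_*\stsh_{\LGr}(-1),\stsh_Y(-1))\simeq\CC$ and indecomposability --- is a legitimate shortcut. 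But the crucial input, that cohomology computation, is exactly what you have not established, and you say so yourself. Two things are asserted rather than proved: (i) that the term-wise image of the five-term resolution really represents $\TU'_1\Seg(\stsh_Y(-1))$ as the displayed naive complex --- note $\stsh_{Y'}(-2)$ is \emph{not} acyclic for $\Psi'_{\mathrm{T},1}$ (since $H^1(Y',\stsh_{Y'}(-3))\simeq\CC$), so its image is a genuine two-term complex and the unfolding is a convolution whose twisted differential could a priori contain an extra component from the degree-$2$ term into the degree-$4$ slot; and (ii) the identifications of $d^2$ and $d^3$ with the maps coming from Lemma \ref{key lem ST2}. Without (i) and (ii) the claimed vanishing of the intermediate cohomologies, and hence the whole argument, is not yet a proof.

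The paper avoids all of this by working on the $Y'$-side with a two-term presentation instead of your five-term one: it applies $\Seg'$ to the short exact sequence $0\to\stsh_{Y'}(-2)\to\Sigma(-1)\to\stsh_{Y'}(1)\to 0$, so that $\Seg'(\stsh_{Y'}(1))$ sits in a triangle $\stsh_Y(2)\to\Sub(1)\to\Seg'(\stsh_{Y'}(1))$; Lemma \ref{key lem ST2} (twisted by $\stsh_Y(-1)$) identifies the cokernel of $\stsh_Y(2)\to\Sub(1)$ with $I_{\LGr/Y}(-1)$, so $\Seg'(\stsh_{Y'}(1))\simeq I_{\LGr/Y}(-1)$, and then the single computation $\RHom_Y(\iota_*\stsh_{\LGr}(-1),\stsh_Y(-1))\simeq\CC[-5]$ together with the sequence $0\to I_{\LGr/Y}(-1)\to\stsh_Y(-1)\to\iota_*\stsh_{\LGr}(-1)\to 0$ shows $\ST_{\stsh_{\LGr}(-1)}(I_{\LGr/Y}(-1))\simeq\stsh_Y(-1)$. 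This uses only Segal's functor and Lemma \ref{key lem ST2} --- no Fourier--Mukai kernel (Theorem \ref{FM ker}), no long resolution, no convolution bookkeeping. If you want to salvage your route, you must carry out (i) and (ii) above; otherwise I recommend replacing the essential-summand computation by the two-term argument just described.
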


\begin{proof}
We have to show the following diagram commutes:
\[ \begin{tikzcd}
\D(Y) \arrow[r, "\Seg"] \arrow[d, "\ST_{\stsh_{\LGr}(-1)}"'] & \D(Y') \arrow[d, "\Psi'_{\mathrm{T}, 1}"] \\
\D(Y) \arrow[r, "\Psi_{\mathrm{U}, 1}"] & \D(\modu \Lambda_{\mathrm{U}, 1}).
\end{tikzcd} \]
As in the proof of Theorem \ref{thm flop-flop}, it is enough to show that
\[ \Seg'(\stsh_{Y'}(1)) \simeq \ST_{\stsh_{\LGr}(-1)}^{-1}(\stsh_Y(-1)). \]
First, using an exact sequence
\[ 0 \to I_{\LGr/Y}(-1) \to \stsh_Y(-1) \to \stsh_{\LGr}(-1) \to 0 \]
and a computation
\[ \RHom_Y(\iota_* \stsh_{\LGr}(-1), \stsh_Y(-1)) \simeq \RGamma(\LGr, \stsh_{\LGr}(-3))[-2] \simeq \CC[-5] \]
imply $\RHom_Y(\iota_* \stsh_{\LGr}(-1), I_{\LGr/Y}(-1)) \simeq \CC[1]$,
and hence $\ST_{\stsh_{\LGr}(-1)}(I_{\LGr/Y}(-1)) = \stsh_Y(-1)$.
On the other hand, applying the functor $\Seg'$ to the sequence
\[ 0 \to \stsh_{Y'}(-2) \to \Sigma(-1) \to \stsh_{Y'}(1) \to 0, \]
gives a triangle
\[ \stsh_Y(2) \to \Sub(1) \to \Seg'(\stsh_{Y'}(1)) \to \stsh_Y(2)[1]. \]
Then Lemma \ref{key lem ST2} implies $\Seg'(\stsh_{Y'}(1)) \simeq I_{\LGr/Y}(-1)$ as desired.
\end{proof}

\subsection{Multi-mutation=twist result}

Note that $\Lambda_{\mathrm{U}, 1}$ is the endomorphism ring of an $R$-module
\[ M_{-1} \oplus M_0 \oplus M_1 \oplus S_1. \]
Let $W' := M_{0} \oplus M_1 \oplus S_1$.
This $W'$ was denoted by $W_3$ in Theorem \ref{mutation1}.
Recall that $\Lambda_{\mathrm{S}}$ is the endomorphism ring of $W' \oplus M_{2}$.

\begin{prop} \label{mutation3}
Tthe following two isomorphism of $R$-modules holds.
\begin{enumerate}
\item[(1)] $\mu_{W'}\mu_{W'}\mu_{W'}(W' \oplus M_{-1}) \simeq W' \oplus M_{2}$.
\item[(2)] $\mu_{W'}(W' \oplus M_{2}) \simeq W' \oplus M_{-1}$.
\end{enumerate}
Moreover, the induced IW functor
\[ \Phi_{W'} : \D(\modu \Lambda_{\mathrm{S}}) \to \D(\modu \Lambda_{\mathrm{U},1}) \]
from (2) is isomorphic to $\Psi'_{\mathrm{U},1} \circ (\Psi'_{\mathrm{S}})^{-1}$.
\end{prop}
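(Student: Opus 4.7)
For part (1), the plan is to reuse verbatim the $W_3$-case analysis from the proof of Theorem \ref{mutation1}. Recall the exact sequence on $Y$
\[ 0 \to \stsh_Y(-1) \to \stsh_Y^{\oplus 5} \to \Sub(1)^{\oplus 4} \to \stsh_Y(1)^{\oplus 5} \to \stsh_Y(2) \to 0 \]
arising from right mutations of $\stsh_{\LGr}(-1)$ over the exceptional collection $\stsh_Y, \Sub(1), \stsh_Y(1)$. Splice this into three short exact sequences of vector bundles whose middle terms all lie in $\add \mathcal{W}$ (where $\mathcal{W} := \stsh_Y \oplus \stsh_Y(1) \oplus \Sub(1)$); Lemma \ref{lem tilt1} guarantees that the intermediate images $\mathcal{W} \oplus \Ima(c_k)$ are again tilting bundles, so Lemma \ref{lem approx2} applies at each step. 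After pushing down by $\phi$, three consecutive applications of $\mu_{W'}$ take $W' \oplus M_{-1}$ through $W' \oplus \phi_*\Ima(c_2)$ and $W' \oplus \phi_*\Ima(c_3)$ to $W' \oplus M_2$, which is (1).

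For part (2), the strategy is to exhibit \emph{one} short exact sequence on $Y'$ that directly realises the inverse mutation. The defining extension of $\Sigma$ is
\[ 0 \to \stsh_{Y'}(-1) \to \Sigma \to \stsh_{Y'}(2) \to 0, \]
and twisting by $\stsh_{Y'}(-1)$ yields
\[ 0 \to \stsh_{Y'}(-2) \to \Sigma(-1) \to \stsh_{Y'}(1) \to 0. \]
Set $\mathcal{W}' := \stsh_{Y'} \oplus \stsh_{Y'}(-1) \oplus \Sigma(-1)$. Then $\Sigma(-1) \in \add \mathcal{W}'$, while $\mathcal{W}' \oplus \stsh_{Y'}(-2) = \Tilt'_{\mathrm{S}}$ and $\mathcal{W}' \oplus \stsh_{Y'}(1) = \Tilt'_{\mathrm{T}, 1}$ are already known to be tilting. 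The hypotheses of Lemma \ref{lem approx2} are thus met: the pushed-down sequence $0 \to M_2 \to S_1 \to M_{-1} \to 0$ (exactness of which uses $H^1(Y', \stsh_{Y'}(-2)) = 0$ from Lemma \ref{lem seg}) is a right $(\add W')$-approximation of $M_{-1}$, and so $\mu_{W'}(W' \oplus M_2) \simeq W' \oplus M_{-1}$.

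For the functor statement, Lemma \ref{lem approx2}(2) identifies the IW mutation functor induced by the sequence in (2) with
\[ \Phi_{W'} \simeq \RHom_{\Lambda_{\mathrm{S}}}\bigl(\RHom_{Y'}(\Tilt'_{\mathrm{S}}, \Tilt'_{\mathrm{T}, 1}), -\bigr). \]
Since $\Psi'_{\mathrm{S}} = \RHom_{Y'}(\Tilt'_{\mathrm{S}}, -)$ is fully faithful, for any $E \in \D(Y')$ one has
\[ \RHom_{\Lambda_{\mathrm{S}}}\bigl(\RHom_{Y'}(\Tilt'_{\mathrm{S}}, \Tilt'_{\mathrm{T}, 1}),\, \Psi'_{\mathrm{S}}(E)\bigr) \simeq \RHom_{Y'}(\Tilt'_{\mathrm{T}, 1}, E) = \Psi'_{\mathrm{U}, 1}(E), \]
giving $\Phi_{W'} \simeq \Psi'_{\mathrm{U}, 1} \circ (\Psi'_{\mathrm{S}})^{-1}$ as claimed.

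The only genuinely new ingredient is the observation in (2) that the defining extension of $\Sigma$, after a single twist, already interpolates between $\stsh_{Y'}(-2)$ and $\stsh_{Y'}(1)$ through a summand of $\mathcal{W}'$; once that is spotted, Lemma \ref{lem approx2} does all the work. Part (1) is essentially quoted from Theorem \ref{mutation1}, and the functor identification is a formal tensor–Hom manipulation, so I do not anticipate a serious obstacle.
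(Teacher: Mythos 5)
Your proposal is correct and follows essentially the same route as the paper: part (1) is exactly the $W_3$-case already established in Theorem \ref{mutation1}, and part (2) applies Lemma \ref{lem approx2} to the twisted extension $0 \to \stsh_{Y'}(-2) \to \Sigma(-1) \to \stsh_{Y'}(1) \to 0$, whose pushforward $0 \to M_2 \to S_1 \to M_{-1} \to 0$ is precisely the exchange sequence the paper uses. The identification of $\Phi_{W'}$ with $\RHom_{\Lambda_{\mathrm{S}}}(\RHom_{Y'}(\Tilt'_{\mathrm{S}}, \Tilt'_{\mathrm{T},1}), -)$ and hence with $\Psi'_{\mathrm{T},1} \circ (\Psi'_{\mathrm{S}})^{-1}$ (the functor the statement denotes $\Psi'_{\mathrm{U},1}$) is likewise the same formal consequence of Lemma \ref{lem approx2}(2) that the paper invokes.
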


\begin{proof}
(1) was proved in Theorem \ref{mutation1}.
One can show (2) by using Lemma \ref{lem approx2}.
The exchange sequence for (2) is given by the dual of the exact sequence
\[ 0 \to M_{2} \to S_1 \to M_{-1} \to 0. \]
This sequence is obtained by taking the global section of the sequence
\[ 0 \to \stsh_{Y'}(-2) \to \Sigma(-1) \to \stsh_{Y'}(1) \to 0. \]
\end{proof}

The following is a ``multi-mutation=twist" result for the Abuaf flop.

\begin{thm} \label{multimutationtwist}
By Proposition \ref{mutation3}, we have an autoequivalence of $\D(\modu \Lambda_{\mathrm{U},1})$ by composing four IW mutation functors at $W'$:
\[ \Phi_{\mathrm{W'}} \circ \Phi_{\mathrm{W'}} \circ \Phi_{\mathrm{W'}} \circ \Phi_{\mathrm{W'}} \in \Auteq(\D(\modu \Lambda_{\mathrm{U},1})). \]
This autoequivalence corresponds to a spherical twist $\ST_{\stsh_{\LGr}(-1)} \in \Auteq(\D(Y))$ under the identification
\[ \Phi_{\mathrm{U}, 1} : \D(Y) \xrightarrow{\sim} \D(\modu \Lambda_{\mathrm{U}, 1}). \]
\end{thm}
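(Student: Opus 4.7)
The plan is to unwind $\Phi_{W'}^{\,4}$ into a composition of two previously identified equivalences transported to $\D(Y)$, and then quote Theorem \ref{FFT}. Concretely, using Proposition \ref{mutation3} the four mutations naturally group as $\Phi_{W'}^{\,4} = \Phi_{W'} \circ \Phi_{W'}^{\,3}$, where the first three mutations carry $\D(\modu \Lambda_{\mathrm{U},1})$ to $\D(\modu \Lambda_{\mathrm{S}})$ via Proposition \ref{mutation3}(1) and the last one returns from $\D(\modu \Lambda_{\mathrm{S}})$ to $\D(\modu \Lambda_{\mathrm{U},1})$ via Proposition \ref{mutation3}(2). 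Thus everything reduces to identifying each half with a tilting transfer.

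First I would use Theorem \ref{mutation1}: the three mutations at $W_{3}=W'$ appearing in its proof are precisely the three mutations of Proposition \ref{mutation3}(1), and they give
\[
\Phi_{W'}^{\,3} \;\simeq\; \RHom_{\Lambda_{\mathrm{U},1}}(\RHom_Y(\Tilt_{\mathrm{U},1},\Tilt_{\mathrm{S}}),-) \;\simeq\; \Psi_{\mathrm{S}} \circ \Psi_{\mathrm{U},1}^{-1}.
\]
Second, I would invoke the last sentence of Proposition \ref{mutation3} directly, identifying the final factor as
\[
\Phi_{W'} \;\simeq\; \Psi'_{\mathrm{T},1} \circ (\Psi'_{\mathrm{S}})^{-1}.
\]
Composing these and conjugating by $\Psi_{\mathrm{U},1}$, the $\Psi_{\mathrm{U},1}^{-1}\circ\Psi_{\mathrm{U},1}$ in the middle cancels and we obtain
\[
\Psi_{\mathrm{U},1}^{-1} \circ \Phi_{W'}^{\,4} \circ \Psi_{\mathrm{U},1}
\;\simeq\; \bigl(\Psi_{\mathrm{U},1}^{-1}\circ \Psi'_{\mathrm{T},1}\bigr) \circ \bigl((\Psi'_{\mathrm{S}})^{-1}\circ \Psi_{\mathrm{S}}\bigr)
\;=\; \TU_1' \circ \Seg,
\]
by the very definitions of $\TU_1'$ and $\Seg$.

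Finally, Theorem \ref{FFT} asserts $\TU_1'\circ \Seg \simeq \ST_{\stsh_{\LGr}(-1)}$, which completes the proof. The argument is essentially a bookkeeping consolidation: all the geometry was done in Theorem \ref{mutation1}, Proposition \ref{mutation3}, and Theorem \ref{FFT}. The only point requiring care is the compatibility between the two instances of $\Phi_{W'}$, namely that the IW mutation functor attached to the cycle $(M_{-1}\to M_2)$ in Proposition \ref{mutation3}(1) and the one attached to $(M_{2}\to M_{-1})$ in Proposition \ref{mutation3}(2) are indeed inverse equivalences that together produce an autoequivalence of $\D(\modu\Lambda_{\mathrm{U},1})$; this is guaranteed by Theorem \ref{IW mutation equiv} applied along each exchange sequence, which is the main conceptual (rather than computational) step of the argument.
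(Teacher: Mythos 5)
Your argument is correct and is essentially the paper's own proof: the commutative diagram the paper draws encodes exactly your identifications $\Phi_{W'}^{3} \simeq \Psi_{\mathrm{S}} \circ \Psi_{\mathrm{U},1}^{-1}$ (from Theorem \ref{mutation1}), $\Phi_{W'} \simeq \Psi'_{\mathrm{T},1} \circ (\Psi'_{\mathrm{S}})^{-1}$ (from Proposition \ref{mutation3}), and the final appeal to Theorem \ref{FFT} giving $\TU_1' \circ \Seg \simeq \ST_{\stsh_{\LGr}(-1)}$. One small slip in your closing remark: the two instances of $\Phi_{W'}$ are \emph{not} mutually inverse equivalences (if they were, the composite would be the identity rather than a nontrivial spherical twist); what is actually needed, and what Proposition \ref{mutation3} supplies, is only that $\mu_{W'}^{4}$ returns the module $M$ up to additive closure, so that the four-fold composite is an endofunctor of $\D(\modu \Lambda_{\mathrm{U},1})$ -- your main chain of identifications never uses the (false) inverse property, so the proof stands.
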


\begin{proof}
By Theorem \ref{mutation1}, Theorem \ref{FFT}, and Proposition \ref{mutation3}, we have the following commutative diagram
\[ \begin{tikzcd}
\D(Y) \arrow[d, "\Psi_{\mathrm{U}, 1}"] \arrow[r, equal] & \D(Y) \arrow[d, "\Psi_{\mathrm{S}}"] \arrow[r, "\ST_{\stsh_{\LGr}(-1)}"] & \D(Y) \arrow[d, "\Psi_{\mathrm{U}, 1}"] \\
\D(\modu \Lambda_{\mathrm{U}, 1}) \arrow[r, "\Phi^3_{W'}"] & \D(\modu \Lambda_{\mathrm{S}}) \arrow[r, "\Phi_{W'}"] & \D(\modu \Lambda_{\mathrm{U}, 1}) \\
&\D(Y') \arrow[r, equal] \arrow[u, "\Psi_{\mathrm{S}}"] & \D(Y'), \arrow[u, "\Psi_{\mathrm{T}, 1}"]
\end{tikzcd} \]
and the result follows from this diagram.
\end{proof}

\begin{rem} \rm
Compare this result with \cite[Theorem 5.18 and Remark 5.19]{H17a}.
There, the author proved that a P-twist on the cotangent bundle $T^*\PP^n$ of $\PP^n$ associated to the sheaf $\stsh_{\PP}(-1)$ on the zero section $\PP \subset T^*\PP^n$ corresponds to a composition of $2n$ IW mutations of an NCCR.
\end{rem}
\begin{rem} \rm
By Theorem \ref{multimutationtwist}, we notice that an autoequivalence
\[ \Phi_{\mathrm{W'}} \circ \Phi_{\mathrm{W'}} \circ \Phi_{\mathrm{W'}} \circ \Phi_{\mathrm{W'}} \in \Auteq(\D(\modu \Lambda_{\mathrm{U},1})) \]
corresponds to a spherical twist
\[ \ST_{\sh} \in \Auteq(\D(Y')) \]
on $Y'$ around an object $\sh := \UT_1(\stsh_{\LGr}(-1))$,
under the identification
\[ \Psi'_{\mathrm{T}, 1} : \D(Y') \xrightarrow{\sim} \D(\modu \Lambda_{\mathrm{U}, 1}). \]
Note that $\sh$ is also a spherical object on $Y'$ because $Y'$ has a trivial canonical bundle.
However, in contrast to the case for $Y$, the object $\sh$ is not contained in the subcategory $\iota'_*\D(\PP)$ of $\D(Y')$.

Indeed, we have
\begin{align*}
&\RHom_{Y'}(\sh, \stsh_{Y'} \oplus \stsh_{Y'}(-1) \oplus \Sigma(-1)) \\
\simeq &\RHom_Y(\iota_*\stsh_{\LGr}(-1), \stsh_Y \oplus \stsh_Y(1) \oplus \Sub(1)) \\
\simeq &\RHom_{\LGr}(\stsh_{\LGr}(-1), (\stsh_{\LGr} \oplus \stsh_{\LGr}(1) \oplus \Sub(1)) \otimes \omega_{\LGr})[-2] \\
=  &0.
\end{align*}
Thus, if $\sh \simeq \iota'_* F$ for some $F \in \D(\PP)$, then
\[ 0 = \RHom_{Y'}(\iota'_* F, \stsh_{Y'} \oplus \stsh_{Y'}(-1) \oplus \Sigma(-1)) \simeq \RHom_{\PP}(F, \bigoplus_{k=-2}^1 \stsh_{\PP}(k) \otimes \omega_{\PP})[-2]. \]
Since the object $\bigoplus_{k=-2}^1 \stsh_{\PP}(k) \otimes \omega_{\PP}$ spans the derived category $\D(\PP)$ of the three dimensional projective space, the above vanishing implies $F = 0$.
This is a contradiction.
\end{rem}

\section{Mutation of exceptional objects} \label{sect exc colle}

In Section \ref{sect: mutation}, we construct a resolution of a sheaf by using the mutation of exceptional objects.
In this section, we recall the definition of exceptional objects and mutation of them, and explain how to find resolutions that we used in Section \ref{sect: mutation}.

\subsection{Definition}

Let $\mathcal{D}$ be a triangulated category with finite dimensional $\Hom$ spaces.

\begin{defi}
\rm
\begin{enumerate}
\item[(i)] An object $\mathcal{E} \in \mathcal{D}$ is called an \textit{exceptional object} if 
\begin{equation*}
\Hom_{\mathcal{D}}(\mathcal{E}, \mathcal{E}[i]) = \left\{
\begin{array}{ll}
\CC  & \mbox{if $i = 0$,} \\
0  & \mbox{if $i \neq 0$.}
\end{array}
\right.
\end{equation*}
\item[(ii)] A sequence of exceptional objects $\mathcal{E}_1, \dots, \mathcal{E}_r$ is called an \textit{exceptional collection} if $\RHom_{\mathcal{D}}(\mathcal{E}_l, \mathcal{E}_k) = 0$ for all $1 \leq k < l \leq r$.
\item[(iii)] An exceptional collection $\mathcal{E}_1, \dots, \mathcal{E}_r$ is \textit{full} if it generates the whole category $\mathcal{D}$. In such case, we write
\[ \mathcal{D} = \langle \mathcal{E}_1, \dots, \mathcal{E}_r \rangle. \]
\end{enumerate}
\end{defi}

\begin{eg}[\cite{Bei79}, \cite{Kuz08}]
\rm
\begin{enumerate}
\item[(1)] An $n$-dimensional projective space $\mathbb{P}^n$ has a full exceptional collection consisting of line bundles called the Beilinson collection
\[  \D(\mathbb{P}^n) = \langle \stsh, \stsh(1), \stsh(2), \dots, \stsh(n) \rangle. \]
\item[(2)] Let $V$ be a four dimensional symplectic vector space and $\LGr(V)$ the Lagrangian Grassmannian of $V$.
Kuznetsov found a full exceptional collection
\[ \D(\LGr(V)) = \langle \stsh_{\LGr}, \Sub(1), \stsh_{\LGr}(1), \stsh_{\LGr}(2) \rangle. \]
\end{enumerate}
\end{eg}

For an object $\mathcal{E} \in \mathcal{D}$, we define subcategories $\mathcal{E}^{\bot}, {}^{\bot}\mathcal{E} \subset \mathcal{D}$ by
\begin{align*}
\mathcal{E}^{\bot} &:= \{ \mathcal{F} \in \mathcal{D} \mid \RHom_{\mathcal{D}}(\mathcal{E}, \mathcal{F}) = 0 \} \\
{}^{\bot}\mathcal{E} &:= \{ \mathcal{F} \in \mathcal{D} \mid \RHom_{\mathcal{D}}(\mathcal{F}, \mathcal{E}) = 0 \}.
\end{align*}

The following lemma is useful.

\begin{lem} \label{useful}
Let
\[ \mathcal{D} = \langle \mathcal{E}_1, \dots, \mathcal{E}_r \rangle =  \langle \mathcal{E}'_1, \dots, \mathcal{E}'_r \rangle \]
be two full exceptional collections with the same length.
Let $1 \leq i \leq r$ and assume that $\mathcal{E}_j = \mathcal{E}'_j$ holds for all $j \neq i$.
Then, we have
\[ \mathcal{E}_i = \mathcal{E}'_i \]
up to shift.
\end{lem}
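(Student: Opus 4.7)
The approach is to translate the statement into the language of semi-orthogonal decompositions. For the unprimed collection, set $\mathcal{A} := \langle \mathcal{E}_1, \ldots, \mathcal{E}_{i-1}\rangle$ and $\mathcal{B} := \langle \mathcal{E}_{i+1}, \ldots, \mathcal{E}_r\rangle$. A standard property of semi-orthogonal decompositions associated to a full exceptional collection is that the middle piece $\langle \mathcal{E}_i\rangle$ is recovered intrinsically from the other two subcategories as
\[ \mathcal{C} := {}^{\bot}\mathcal{A} \cap \mathcal{B}^{\bot}. \]
By the hypothesis $\mathcal{E}_j = \mathcal{E}'_j$ for all $j \neq i$, the analogous subcategories $\mathcal{A}'$ and $\mathcal{B}'$ for the primed collection coincide with $\mathcal{A}$ and $\mathcal{B}$. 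Hence both $\mathcal{E}_i$ and $\mathcal{E}'_i$ classically generate the same subcategory $\mathcal{C}$.

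The problem therefore reduces to the following claim: if two exceptional objects $\mathcal{E}, \mathcal{E}' \in \mathcal{D}$ satisfy $\langle \mathcal{E}\rangle = \langle \mathcal{E}'\rangle$ as triangulated subcategories, then $\mathcal{E}' \simeq \mathcal{E}[k]$ for some integer $k$. The plan is to use that, since $\mathcal{E}$ is exceptional, the subcategory $\langle \mathcal{E}\rangle$ is triangle-equivalent to $\D(\mathrm{Vect}_{\CC})$, the bounded derived category of finite-dimensional $\CC$-vector spaces, via an equivalence sending $\mathcal{E}$ to $\CC$. Under this equivalence $\mathcal{E}'$ becomes an exceptional object of $\D(\mathrm{Vect}_{\CC})$, so it suffices to classify the exceptional objects of $\D(\mathrm{Vect}_{\CC})$.

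The main content, though purely elementary, is this last classification. For an exceptional object $X \in \D(\mathrm{Vect}_{\CC})$, if two distinct cohomology groups $H^a(X)$ and $H^b(X)$ with $a < b$ were nonzero, then composing the canonical truncation maps $X \to \tau^{\geq b}X \simeq H^b(X)[-b]$ and $H^a(X)[-a] \simeq \tau^{\leq a}X \to X$ (suitably shifted) would produce a nonzero element of $\Hom(X, X[b-a])$, contradicting exceptionality. So $X$ is concentrated in a single degree $a$, and then $\End(H^a(X)) = \CC$ forces $\dim H^a(X) = 1$, whence $X \simeq \CC[-a]$. Translating back, $\mathcal{E}' \simeq \mathcal{E}[-a]$, which completes the argument. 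The only delicate point is the cohomology-concentration step; everything else is a bookkeeping reduction.
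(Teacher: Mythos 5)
Your proposal is correct and follows essentially the same route as the paper: the identity ${}^{\bot}\mathcal{A} \cap \mathcal{B}^{\bot} = \langle \mathcal{E}_i \rangle$ that you invoke is exactly the fact from Bondal that the paper cites (there written as ${}^{\bot}\mathcal{E}_{1} \cap \cdots \cap {}^{\bot}\mathcal{E}_{i-1} \cap \mathcal{E}_{i+1}^{\bot} \cap \cdots \cap \mathcal{E}_{r}^{\bot} = \D(\Spec \CC) \otimes_{\CC} \mathcal{E}_i$), and your classification of exceptional objects of $\D(\mathrm{Vect}_{\CC})$ merely makes explicit the endgame the paper leaves implicit. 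One cosmetic slip: composing $X \to \tau^{\geq b}X \simeq H^b(X)[-b]$ with a shift of $H^a(X)[-a] \simeq \tau^{\leq a}X \to X$ naturally produces a nonzero element of $\Hom(X, X[a-b])$ rather than of $\Hom(X, X[b-a])$, but a nonzero self-$\Hom$ in any nonzero degree contradicts exceptionality, so the argument is unaffected.
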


\begin{proof}
This Lemma follows from the fact
\[  {}^{\bot}\mathcal{E}_{1} \cap \cdots \cap {}^{\bot}\mathcal{E}_{i-1} \cap \mathcal{E}_{i+1}^{\bot} \cap \cdots \cap \mathcal{E}_{r}^{\bot}  = \D(\Spec \CC) \otimes_{\CC} \mathcal{E}_i. \]
For this fact, see \cite{Bondal90}.
\end{proof}

\begin{defi}
\rm
Let $\mathcal{E} \in \mathcal{D}$ be an exceptional object. For an object $\mathcal{F}$ in ${}^{\bot}\mathcal{E}$, we define the \textit{left mutation of $\mathcal{F}$ through $\mathcal{E}$} as the object $\mathbb{L}_{\mathcal{E}}(\mathcal{F})$ in $\mathcal{E}^{\bot}$ that lies in an exact triangle
\[ \RHom(\mathcal{E}, \mathcal{F}) \otimes \mathcal{E} \map \mathcal{F} \map \mathbb{L}_{\mathcal{E}}(\mathcal{F}). \]
Similarly, for an object $\mathcal{G}$ in $\mathcal{E}^{\bot}$, we define the \textit{right mutation of $\mathcal{G}$ through $\mathcal{E}$} as the object $\mathbb{R}_{\mathcal{E}}(\mathcal{G})$ in ${}^{\bot}\mathcal{E}$ which lies in an exact triangle
\[ \mathbb{R}_{\mathcal{E}}(\mathcal{G}) \map \mathcal{G} \map \RHom(\mathcal{G}, \mathcal{E})^* \otimes \mathcal{E}. \]
\end{defi}

\begin{lem}[\cite{Bondal90}] \label{lem exc obj mut}
Let $\mathcal{E}_1, \mathcal{E}_2$ be an exceptional pair (i.e.\ an exceptional collection consisting of two objects).
\begin{enumerate}
\item[(i)] The left (resp. right) mutated object $\mathbb{L}_{\mathcal{E}_1}(\mathcal{E}_2)$ (resp. $\mathbb{R}_{\mathcal{E}_2}(\mathcal{E}_1)$) is again an exceptional object.
\item[(ii)] The pairs of exceptional objects $\mathcal{E}_1, \mathbb{R}_{\mathcal{E}_1}(\mathcal{E}_2)$ and $\mathbb{L}_{\mathcal{E}_2}(\mathcal{E}_1), \mathcal{E}_2$ are again exceptional pairs.
\end{enumerate}
Let $\mathcal{E}_1, \dots, \mathcal{E}_r$ be a full exceptional collection in $\mathcal{D}$. Then
\begin{enumerate}
\item[(iii)] The collection 
\[ \mathcal{E}_1, \dots, \mathcal{E}_{i-1}, \mathbb{L}_{\mathcal{E}_i}(\mathcal{E}_{i+1}), \mathcal{E}_i, \mathcal{E}_{i+2}, \dots, \mathcal{E}_r \]
is again full exceptional for each $1 \leq i \leq r-1$. Similarly, the collection 
\[ \mathcal{E}_1, \dots, \mathcal{E}_{i-2}, \mathcal{E}_i, \mathbb{R}_{\mathcal{E}_i}(\mathcal{E}_{i-1}), \mathcal{E}_{i+1}, \dots, \mathcal{E}_r \] 
is again full exceptional for each $2 \leq i \leq r$.
\item[(iv)] Assume in addition that the category $\mathcal{D}$ admits the Serre functor $S_{\mathcal{D}}$.
Then the following collections
\begin{align*}
\mathcal{E}_2, \dots, \mathcal{E}_{r-1}, \mathcal{E}_r, S_{\mathcal{D}}^{-1}(\mathcal{E}_1) ~~ \text{~ and ~} ~~ S_{\mathcal{D}}(\mathcal{E}_r), \mathcal{E}_1, \mathcal{E}_2, \dots, \mathcal{E}_{r-1}
\end{align*}
are full exceptional collections on $\mathcal{D}$.
\end{enumerate}
\end{lem}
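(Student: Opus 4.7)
The plan is to reduce everything to two tools: the defining triangles of the mutations (which encode the required orthogonality by construction) and, for part (iv), the defining identity of the Serre functor $\Hom(A, B) \simeq \Hom(B, S_{\mathcal{D}}(A))^*$.

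For (i), set $\mathbb{L} := \mathbb{L}_{\mathcal{E}_1}(\mathcal{E}_2)$ and consider the defining triangle
\[ V \otimes \mathcal{E}_1 \to \mathcal{E}_2 \to \mathbb{L}, \]
where $V := \RHom(\mathcal{E}_1, \mathcal{E}_2)$. Applying $\RHom(-, \mathbb{L})$ and using $\RHom(\mathcal{E}_1, \mathbb{L}) = 0$ (which holds by definition of $\mathbb{L} \in \mathcal{E}_1^{\bot}$) collapses the calculation to $\RHom(\mathbb{L}, \mathbb{L}) \simeq \RHom(\mathcal{E}_2, \mathbb{L})$. Applying $\RHom(\mathcal{E}_2, -)$ to the same triangle and using $\RHom(\mathcal{E}_2, \mathcal{E}_1) = 0$ reduces this further to $\RHom(\mathcal{E}_2, \mathcal{E}_2) \simeq \CC$, proving that $\mathbb{L}$ is exceptional. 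The right-mutation case is formally dual. Part (ii) then follows: one of the required vanishings between the objects of the new pair is precisely the orthogonality built into the construction of the mutated object, and the remaining one is obtained from the defining triangle by applying the relevant $\RHom$ and using the original exceptionality.

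For (iii), the exceptionality of the mutated collection requires checking $\RHom(\mathbb{L}_{\mathcal{E}_i}(\mathcal{E}_{i+1}), \mathcal{E}_j) = 0$ for $j < i$ and $\RHom(\mathcal{E}_k, \mathbb{L}_{\mathcal{E}_i}(\mathcal{E}_{i+1})) = 0$ for $k \geq i+2$; both follow by applying $\RHom$ of the defining triangle of $\mathbb{L}_{\mathcal{E}_i}(\mathcal{E}_{i+1})$ against $\mathcal{E}_j$, respectively $\mathcal{E}_k$, and invoking the original exceptionality relations. Fullness is then immediate, because the same defining triangle recovers $\mathcal{E}_{i+1}$ from $\mathcal{E}_i$ and $\mathbb{L}_{\mathcal{E}_i}(\mathcal{E}_{i+1})$, so the triangulated subcategory generated by the mutated collection still contains every original $\mathcal{E}_j$ and hence equals $\mathcal{D}$. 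For (iv), Serre duality rewrites the required vanishing as $\RHom(S_{\mathcal{D}}^{-1}(\mathcal{E}_1), \mathcal{E}_i) \simeq \RHom(\mathcal{E}_i, \mathcal{E}_1)^*$, which is zero for $i \geq 2$ by the original exceptional-collection condition; the other case is symmetric, and fullness follows because the autoequivalence $S_{\mathcal{D}}$ preserves orthogonal complements, so $\langle S_{\mathcal{D}}^{-1}(\mathcal{E}_1) \rangle$ fills the same slot relative to $\langle \mathcal{E}_2, \dots, \mathcal{E}_r \rangle$ that $\langle \mathcal{E}_1 \rangle$ did originally.

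The main obstacle — if it deserves that name — is purely bookkeeping: one has to track carefully which side of the orthogonal complement each mutation operator lands in and match this against the direction of the exceptional pair. Once the conventions are pinned down, every part of the lemma reduces to a short long-exact-sequence chase together with the single Serre-duality identity.
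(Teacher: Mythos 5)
The paper itself gives no proof of this lemma --- it is quoted verbatim from Bondal \cite{Bondal90} --- so there is no in-paper argument to measure you against; what you give is the standard proof, and it is essentially correct. Parts (i) and (ii) are exactly the two triangle chases you describe (with the one vanishing defining an exceptional pair being the orthogonality built into the mutation). In (iii), besides the two vanishings you list, the new adjacent pair also needs $\RHom(\mathcal{E}_i, \mathbb{L}_{\mathcal{E}_i}(\mathcal{E}_{i+1})) = 0$, but this is again the built-in orthogonality, and fullness is indeed immediate from the defining triangle, as you say.

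The one place where your sketch is thinner than your stated toolkit is fullness in (iv). Serre duality does give ${}^{\bot}\mathcal{B} = S_{\mathcal{D}}^{-1}(\mathcal{B}^{\bot})$ for $\mathcal{B} := \langle \mathcal{E}_2, \dots, \mathcal{E}_r \rangle$, and $\mathcal{B}^{\bot} = \langle \mathcal{E}_1 \rangle$ follows from the original decomposition; but to upgrade ``$\langle S_{\mathcal{D}}^{-1}(\mathcal{E}_1)\rangle$ fills the same slot'' to the statement that $\mathcal{B}$ together with ${}^{\bot}\mathcal{B}$ generates $\mathcal{D}$, you must invoke that a subcategory generated by an exceptional collection is admissible, i.e.\ that every object sits in a decomposition triangle with components in $\mathcal{B}$ and ${}^{\bot}\mathcal{B}$. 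This is a standard fact (also due to Bondal), but it is a genuine extra ingredient beyond the defining triangles and the Serre-duality identity you announced, and it should be stated explicitly. A final remark on the statement rather than your proof: as printed, part (ii) refers to $\mathbb{R}_{\mathcal{E}_1}(\mathcal{E}_2)$ and $\mathbb{L}_{\mathcal{E}_2}(\mathcal{E}_1)$, which are not defined for an exceptional pair $\mathcal{E}_1, \mathcal{E}_2$; your argument proves the intended pairs $\mathbb{L}_{\mathcal{E}_1}(\mathcal{E}_2), \mathcal{E}_1$ and $\mathcal{E}_2, \mathbb{R}_{\mathcal{E}_2}(\mathcal{E}_1)$, which is the correct reading.
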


\begin{eg} \rm
By taking mutations, we have the following different full exceptional collections for $\D(\LGr(V))$:
\begin{align*}
\D(\LGr(V)) &= \langle \stsh_{\LGr}, \Sub(1), \stsh_{\LGr}(1), \stsh_{\LGr}(2) \rangle \\
&= \langle \stsh_{\LGr}, \stsh_{\LGr}(1), \Sub(2), \stsh_{\LGr}(2) \rangle \\
&= \langle \stsh_{\LGr}, \stsh_{\LGr}(1), \stsh_{\LGr}(2), \Sub(3) \rangle \\
&= \langle \Sub, \stsh_{\LGr}, \stsh_{\LGr}(1), \stsh_{\LGr}(2) \rangle.
\end{align*}
\end{eg}

\subsection{Application for finding resolutions}

\begin{lem} \label{app lem exact}
There exist the following exact sequences on $\LGr(V)$
\begin{enumerate}
\item $0 \to \stsh_{\LGr}(-3) \to \Sub(-2)^{\oplus 4} \to \stsh_{\LGr}(-2)^{\oplus 11} \to \stsh_{\LGr}(-1)^{\oplus 5} \to \stsh_{\LGr} \to 0$.
\item $0 \to \stsh_{\LGr}(-2) \to \stsh_{\LGr}(-1)^{\oplus 5} \to \stsh_{\LGr}^{\oplus 11} \to \Sub(1)^{\oplus 4} \to \stsh_{\LGr}(1) \to 0.$
\item $0 \to \stsh_{\LGr}(-1) \to \stsh_{\LGr}^{\oplus 5} \to \Sub(1)^{\oplus 4} \to \stsh_{\LGr}(1)^{\oplus 5} \to \stsh_{\LGr}(2) \to 0$.
\end{enumerate}
\end{lem}

\begin{proof}
Let us consider a full exceptional collection
\[ \D(\LGr(V)) = \langle \stsh_{\LGr}(-3), \Sub(-2), \stsh_{\LGr}(-2), \stsh_Y(-1) \rangle. \]
To prove (1), we mutate $\stsh_{\LGr}(-3)$ over an exceptional collection $\Sub(-2), \stsh_{\LGr}(-2), \stsh_Y(-1)$.
By Lemma \ref{lem exc obj mut} and Lemma \ref{useful}, there is an isomorphism
\[ \RRR_{\Sub(-2)}(\stsh_{\LGr}(-3)) \simeq \LLL_{\stsh_{\LGr}(-2)}\LLL_{\stsh_{\LGr}(-1)}(\stsh_{\LGr}(-3) \otimes \omega_{\LGr}^{-1}) \]
up to shift.
Note that $\stsh_{\LGr}(-3) \otimes \omega_{\LGr}^{-1} \simeq \stsh_{\LGr}$.

First, we have
\[ \RHom_{\LGr(V)}(\stsh_{\LGr}(-3), \Sub(-2)) \simeq \CC^{4} \]
and hence the object $\RRR_{\Sub(-2)}(\stsh_{\LGr}(-3))[1]$ lies on an exact triangle
\[ \stsh_{\LGr}(-3) \xrightarrow{\text{ev}} \Sub(-2)^{\oplus 4} \to \RRR_{\Sub(-2)}(\stsh_{\LGr}(-3))[1] \to \stsh_{\LGr}(-3)[1]. \]
Since $\stsh_{\LGr}(-3)$ and $\Sub(-2)^{\oplus 4}$ are vector bundles on $\LGr(V)$, the map $\text{ev}$ should be injective and hence the object
$\RRR_{\Sub(-2)}(\stsh_{\LGr}(-3))[1]$ is a sheaf on $\LGr(V)$.
Thus, we put
\[ \sh := \RRR_{\Sub(-2)}(\stsh_{\LGr}(-3))[1]. \]

Next, we have $\RHom_{\LGr(V)}(\stsh_{\LGr}(-1), \stsh_{\LGr}) \simeq \CC^5$ and hence
\[ \LLL_{\stsh_{\LGr}(-1)}(\stsh_{\LGr})[-1] \simeq \Omega_{\PP^4}^1|_{\LGr}. \]
Moreover, an easy computation shows that $\RHom_{\LGr(V)}(\stsh_{\LGr}(-2), \Omega_{\PP^4}^1|_{\LGr}) \simeq \CC^{11}$ and hence the object
$\LLL_{\stsh_{\LGr}(-2)}(\Omega_{\PP^4}^1|_{\LGr})$ lies on the exact sequence
\[ \stsh_{\LGr}(-2)^{\oplus 11} \xrightarrow{\text{ev}} \Omega_{\PP^4}^1|_{\LGr} \to \LLL_{\stsh_{\LGr}(-2)}(\Omega_{\PP^4}^1|_{\LGr}) \to \stsh_{\LGr}(-2)^{\oplus 11}[1]. \]
From the above computation, the object $\LLL_{\stsh_{\LGr}(-2)}(\Omega_{\PP^4}^1|_{\LGr})$ should be a sheaf on $\LGr(V)$ (up to shift)
whose generic rank is equal to $7$.
Thus the map $\text{ev}$ is surjective and $\LLL_{\stsh_{\LGr}(-2)}(\Omega_{\PP^4}^1|_{\LGr})[-1] \simeq \sh$.

Summarising the above arguments, we have three exact sequences:
\begin{align*}
&0 \to \stsh_{\LGr}(-3) \to \Sub(-2)^{\oplus 4} \to \sh \to 0, \\
&0 \to \sh \to \stsh_{\LGr}(-2)^{\oplus 11} \to \Omega_{\PP^4}^1|_{\LGr} \to 0, ~ \text{and} \\
&0 \to \Omega_{\PP^4}^1|_{\LGr} \to \stsh_{\LGr}(-1)^{\oplus 5} \to \stsh_{\LGr} \to 0.
\end{align*}
Combining these three exact sequences gives the exact sequence (1).

To obtain (2), take the dual of (1), and then apply $\otimes \stsh(-2)$.

The sequence (3) is proved using a similar argument as in (1).
This exact sequence comes from the right mutation of $\stsh_{\LGr}(-1)$ over an exceptional collection $\stsh_{\LGr}, \Sub(1), \stsh_{\LGr}(1)$.
\end{proof}


\end{document}